\numberwithin{equation}{section}
\pgfplotsset{compat=newest}
\theoremstyle{plain}
\newtheorem{Th}{Theorem}[section]
\newtheorem{Lemma}[Th]{Lemma}
\newtheorem{Cor}[Th]{Corollary}
\newtheorem{Prop}[Th]{Proposition}
 \theoremstyle{definition}
\newtheorem{Def}[Th]{Definition}
\newtheorem{Rem}[Th]{Remark}
\newtheorem{?}[Th]{Problem}
\begin{document}

\title{New moduli spaces of one-dimensional sheaves on $\mathbb{P}^3$}
\author{Dapeng Mu}

\begin{abstract}
In \cite{altavilla2019euler}, a one-dimensional family of ``Euler'' stability conditions on $\mathbb{P}^n$ are conjectured to converge to Gieseker stability for coherent sheaves. Here, we focus on ${\mathbb P}^3$, first identifying Euler stability conditions with double-tilt stability conditions, and then we consider moduli of one-dimensional sheaves, proving some asymptotic results, boundedness for walls, and then explicitly computing walls and wall-crossings for sheaves supported on rational curves of degrees $3$ and $4$.
\end{abstract}

\maketitle
\tableofcontents

\section{Introduction}

In \cite{altavilla2019euler}, a one-parameter family of Euler stability conditions on the derived category of coherent sheaves on ${\mathbb P}^n$ was introduced. The heart, denoted by $\mathscr{A}_t$, consists of complexes
$$\mathscr{A}_t:=\left\{ \left[ \mathscr{O}_{\mathbb{P}^n}^{a_{-n}}(-k-n)\to \cdots \to \mathscr{O}_{\mathbb{P}^n}^{a_{-1}}(-k-1)\to \mathscr{O}_{\mathbb{P}^n}^{a_{0}}(-k) \right] : a_{0},...,a_{-n} \in \mathbb{Z}_{\geq 0}, t\in \mathbb{R} \right\}$$
in which $k$ is the roundup of $t$ to the closest integer. With the observation that the Hilbert polynomial $P_t(\mathscr{O}_{\mathbb{P}^n})=\chi(\mathscr{O}_{\mathbb{P}^n}(t))$ of $\mathscr{O}_{\mathbb{P}^n}$ has simple roots $t = -1, -2, ..., -n$. We define the central charge using the Euler characteristic as follows. For any object $E\in \mathscr{A}_t$, let $\chi_t(E)$ be the Euler characteristic of the twisted object $E\otimes \mathscr{O}_{\mathbb{P}^n}(t)$, i.e. 
$$\chi_t(E):=\int_{\mathbb{P}^3} ch(E) \cdot ch(\mathscr{O}_{\mathbb{P}^3}(t))\cdot Td(\mathbb{P}^3)$$ It's the Hilbert polynomial of $E$, and denote $\chi'_t(E)$ as the derivative of $\chi_t(E)$ with respect to $t$. 
Define the central charge as $Z_t:=\chi'_t + i\cdot \chi_t$.
It was proved (in \cite{altavilla2019euler}, also in Section $3$) that the pair $\sigma_t=(\mathscr{A}_t, Z_t)$ is a stability condition on $\mathbb{P}^n$. We conjectured that the moduli space of Euler-stable complexes for large $t$ coincides with the moduli of Gieseker-stable sheaves. Here, we focus on the conjecture for objects in ${\mathcal D}^b({\mathbb P}^3)$ of class $v=(0,0,ch_2>0,ch_3)$, i.e. the class of a one-dimensional coherent sheaf.

On $\mathbb{P}^3$, there is a construction of Bridgeland stability conditions by the double-tilting approach (\cite{Bayer_2013},\cite{Macr__2014}). Denote this stability condition by $\sigma_{\alpha,\beta, s}=(\mathscr{A}^{\alpha, \beta}, Z_{\alpha, \beta, s})$. The heart $\mathscr{A}^{\alpha, \beta}$ is obtained from tilting $Coh(\mathbb{P}^3)$ twice, and the central charge is the following function with twisted Chern characters (Def \ref{twisted chern character}),

$$Z_{\alpha, \beta, s}:= -ch^{\beta}_3+(s+\frac{1}{6})\alpha^2H^2ch^{\beta}_1+i\cdot(Hch^{\beta}_2-\frac{\alpha^2}{2}H^3ch^{\beta}_0)$$
For $\alpha=\frac{1}{\sqrt{3}}$, $\beta=-t-2$, $s=\frac{1}{3}$, it's straightforward to check that the central charge becomes $Z^{\mathscr{B}_t}_t := Z_{\frac{1}{\sqrt{3}}, -t-2, \frac{1}{3}}=-\chi_t + i \cdot \chi'_t$. Denote the corresponding heart $\mathscr{A}^{\frac{1}{\sqrt{3}}, -t-2}$ by $\mathscr{B}_t$, and this one-dimensional stability condition by $\sigma^{\mathscr{B}_t}_t:=(\mathscr{B}_t, Z^{\mathscr{B}_t}_t)$. 
We will show that the two stability conditions $\sigma_t$ (Euler) and $\sigma^{\mathscr{B}_t}$ (double-tilt) are essentially the same in the sense that $\sigma_t$ is a tilt of $\sigma^{\mathscr{B}_t}_t$.

To study the asymptotic behavior of objects in $\mathscr{A}_t$, we make use of the finiteness of $\mathscr{A}_t$ and the better-behaved walls of $\mathscr{B}_t$. We extend the stability condition  $\sigma^{\mathscr{B}_t}_t$ to the "$(t,u)$" upper half plane by modifying the central charge to be $Z_{t,u}=-\chi_t+\frac{u^2}{2}\chi''_t+i\cdot \chi'_t$ ($\chi''_t$ denotes the second derivative of $\chi_t$, and the $"u"$ parameter is essentially the $"s"$ parameter in the double-tilt stability). The pair $\sigma_{t,u}=(\mathscr{B}_t, Z_{t,u})$ is a stability condition, analogous to the construction of tilt stability on surfaces. We start with studying the asymptotic behavior of a fixed object $E$ for $u>>0$.

\begin{Th}
For any fixed $t$, and any object $E \in \mathscr{B}_t$ wtih class $v(E)=(0,0,ch_2>0,ch_3)$, there exists a $u_E>0$ such that for all $u>u_E$, $E$ is $\sigma_{t,u}$-stable if and only if $E$ is a Gieseker stable sheaf. 
\end{Th}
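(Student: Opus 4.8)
The plan is to run the large-volume analysis familiar from tilt stability, exploiting the fact that the class of $E$ is insensitive to $u$ while those of its potential destabilizers are not. First I would record the numerical data: directly from the definitions, a class $(ch_0,ch_1,ch_2,ch_3)$ has $\chi''_t=(t+2)ch_0+ch_1=H^2ch^{\beta}_1$ and $\chi'_t=Hch^{\beta}_2-\tfrac16 ch_0$ (here $\beta=-t-2$), which are precisely the imaginary parts of the first and the second tilt. In particular $v(E)=(0,0,ch_2,ch_3)$ gives $\chi''_t(E)=0$ and $\chi'_t(E)=ch_2>0$, so $Z_{t,u}(E)=-\chi_t(E)+i\,ch_2$ is independent of $u$: the object $E$ sits at a fixed point of the upper half-plane with a fixed phase $\phi(E)\in(0,1)$. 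Setting $\mu_{t,u}:=-\operatorname{Re}Z_{t,u}/\operatorname{Im}Z_{t,u}$ on objects of positive imaginary part, testing $\sigma_{t,u}$-stability of $E$ amounts to comparing $\mu_{t,u}(F)$ with the constant $\mu_{t,u}(E)$ over all $\mathscr{B}_t$-subobjects $F\hookrightarrow E$.

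I would then analyse the limit $u\to\infty$. For $F$ with $\chi'_t(F)>0$,
\[
\mu_{t,u}(F)=\frac{\chi_t(F)-\tfrac{u^2}{2}\chi''_t(F)}{\chi'_t(F)},
\]
which is affine in $u^2$ with leading coefficient of sign $-\operatorname{sign}\chi''_t(F)$. Hence $\chi''_t(F)>0$ forces $\mu_{t,u}(F)\to-\infty$ (harmless), $\chi''_t(F)<0$ forces $\mu_{t,u}(F)\to+\infty$ (eventually destabilizing), and $\chi''_t(F)=0$ freezes $\mu_{t,u}(F)$ at the reduced Hilbert polynomial $\chi_t(F)/ch_2(F)=p_F$, so that the comparison with $\mu_{t,u}(E)=p_E$ is exactly the Gieseker comparison. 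The dangerous case $\chi''_t(F)<0$ does not in fact occur when $E$ is a sheaf: since such an $E$ lies in the first tilted category, a cohomology-sequence argument shows every $\mathscr{B}_t$-subobject $F\hookrightarrow E$ lies there as well, whence $\chi''_t(F)=H^2ch^{\beta}_1(F)\ge0$; moreover $\chi''_t(F)=0$ then forces $ch_0(F)=ch_1(F)=0$, i.e. $F$ is an honest $1$- or $0$-dimensional subsheaf. Thus for a sheaf $E$ the limiting order is lexicographic in $(\chi''_t,\,p)$ and, on the locus $\chi''_t=0$ where all subsheaves live, it is Gieseker. (Subobjects with $\chi'_t(F)=0$ are treated directly through the sign of $\operatorname{Re}Z_{t,u}(F)$ and correspond to the purity of $E$.)

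The step I expect to be the main obstacle is boundedness: extracting a single threshold $u_E$ rather than one per destabilizer. Even a subobject that is harmless in the limit can destabilize at every finite $u$ below its own wall, so one must bound the set of numerical classes that can ever define a wall for $E$. For this I would use that $\mathscr{B}_t$ is Noetherian to reduce to destabilizers $F$ that are themselves $\sigma_{t,u}$-semistable, and then apply the Bogomolov/support-property inequality attached to $\sigma_{t,u}$ (available since it is a stability condition). Combined with the a priori bound $0\le\chi'_t(F)\le ch_2$ on the imaginary part, this inequality bounds $\chi''_t(F)$ and then $\chi_t(F)$, leaving only finitely many classes. Since $\mu_{t,u}(F)-\mu_{t,u}(E)$ is affine in $u^2$, each such class yields at most one critical value of $u$; taking $u_E$ to be the largest of these, there are no walls for $E$ above $u_E$. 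Making the inequality effective enough to produce this finite list is the delicate point, and is where the finiteness of $\mathscr{A}_t$ and the wall-control in $\mathscr{B}_t$ are meant to enter.

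Finally, for $u>u_E$ the $\sigma_{t,u}$-stability of $E$ is constant, so it suffices to match the limiting notion with Gieseker stability. If $E$ is a Gieseker-stable sheaf, then by the previous paragraph its subobjects with $\chi''_t>0$ are harmless above their walls, those with $\chi''_t=0$ are subsheaves obeying the strict Gieseker inequality, and none have $\chi''_t<0$; purity rules out the remaining $\chi'_t=0$ subobjects, so $E$ is $\sigma_{t,u}$-stable. Conversely, if $E$ is $\sigma_{t,u}$-stable for all $u>u_E$, then $E$ must be a sheaf, for otherwise a cohomology sheaf of $E$, suitably shifted, supplies a sub- or quotient object whose $\chi''_t$ has destabilizing sign for large $u$; it must be pure, since a lower-dimensional subsheaf gives a $\chi'_t=0$ destabilizer; and it must be Gieseker stable, since a Gieseker-destabilizing subsheaf has $\chi''_t=0$ and destabilizes for $u\gg0$. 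Tracking strict versus non-strict inequalities throughout yields the equivalence with Gieseker stability (and not merely semistability), completing the proof.
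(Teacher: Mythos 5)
Your overall architecture matches the paper's: you split the statement into (i) a sheaf with class $v$ is Gieseker stable iff $\sigma_{t,u}$-stable for $u\gg 0$, and (ii) an object that stays $\sigma_{t,u}$-stable for $u\gg0$ must be a sheaf, and your sign analysis of $\lambda_{t,u}(F)=(\chi_t(F)-\tfrac{u^2}{2}\chi''_t(F))/\chi'_t(F)$ according to the sign of $\chi''_t(F)$, together with the cohomology-sequence reductions to $Coh^{-t-2}(\mathbb{P}^3)$ and then to $Coh(\mathbb{P}^3)$, is exactly what the paper does in Propositions \ref{stable sheaf}, \ref{stable complex is sheaf} and Lemmas \ref{complex is sheaf}, \ref{complex s}.

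The genuine gap is the step you yourself flag as ``the main obstacle'': producing a single threshold $u_E$. You propose to reduce to semistable destabilizers and invoke the support-property/Bogomolov quadratic form to conclude that only finitely many numerical classes can define a wall, asserting that together with $0\le\chi'_t(F)\le ch_2$ ``this inequality bounds $\chi''_t(F)$ and then $\chi_t(F)$.'' That assertion is not justified and is precisely where the work lies: the quadratic form $Q_{\alpha,\beta,K}$ must be applied at the point $(t,u(F))$ where $F$ and $E$ have equal slope, and that point varies with $F$, so without further argument you do not get a bound on $\chi_t(F)$ (equivalently on $ch_3(F)$) uniform over all subobjects. The paper closes this by a different mechanism (Lemma \ref{Boundedness for sheaves}): it bounds $\chi''_t(A)/\chi'_t(A)$ from below by $1/(bm)$ using only integrality of $ch_0,ch_1$ and $t=a/b\in\mathbb{Q}$, and it bounds $\chi_t(A)$ from above by passing to the last positive-slope piece $A_i$ of the Harder--Narasimhan filtration of $A$ with respect to $Z_{t,0}=-\chi_t+i\chi'_t$, lifting $A_i\to E$ to a map into the corresponding piece $E_j$ of the filtration of $E$, and then using that $E_j$ has a fixed dimension vector in the quiver heart $\mathscr{A}_t$ and hence only finitely many subobjects. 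This is where the finiteness of $\mathscr{A}_t$ actually enters, rather than through a support-property estimate. To repair your argument you would either need to reproduce something like this lifting-plus-quiver-finiteness step, or genuinely carry out the wall-boundedness computation from the quadratic form (which the paper itself only conjectures in general, cf.\ Proposition \ref{boundedness of walls}); note also that the paper's boundedness lemma assumes $t\in\mathbb{Q}$, a restriction your sketch does not acknowledge.
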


Next, we consider the boundedness of the walls for any given class $v\in K_{num}(\mathbb{P}^3)$ of a one-dimensional sheaf. There are bounded and unbounded parts of potential walls. 
We find that the bounded walls satisfy $|t-2-ch_3/ch_2|\leq ch_2+2\sqrt{2ch_2}$. The unbounded walls remain somewhat of a mystery. We expect that unbounded potential walls are not actual walls.

To support our conjecture, we explicitly compute the wall-crossings for the class $v=(0,0,3,-5)$, the class of the structure sheaf of a twisted cubic. There was previous work on the class of ideal sheaves of space (rational and elliptic) curves in \cite{MR3803142} \cite{MR3597844} \cite{gallardo1609families} \cite{2015arXiv150904608S}, and it was shown that the last moduli space was the Hilbert scheme of curves. Our main result is the following:

\begin{Th}
For the class $v=(0,0,3,-5)\in K_{num}{(\mathbb{P}^3)}$, there are two walls in $\mathscr{A}_1$ ($t\in (0,1]$) defined by the short exact sequences:

$$W_1 \ (t=0.35): \quad 0\to \mathscr{O}_{\mathbb{P}^3}\to E \to Q[1]\to 0$$

$$W_2 \ (t=0.72): \quad 0\to \mathscr{O}_{\Lambda}\to E \to \mathscr{F}_1\to 0$$

In the first sequence, $Q$ is a coherent sheaf with two generic representatives: $\mathscr{I}_C$ and $\mathscr{F}$, where $\mathscr{F}$ is a sheaf containing torsion. 

In the second sequence, $\Lambda \subset {\mathbb P}^3$ is a plane and $\mathscr{F}_1$ is a complex fitting in the short exact sequence: $0 \to \mathscr{F}_1\to \mathbb{C}_P\to \mathscr{O}_{\Lambda}(-3)[2] \to 0$ in $\mathscr{A}_1$, where $\mathbb{C}_P$ is the skyscraper sheaf of a point in $\Lambda$.

The moduli spaces in the three corresponding chambers are as follows:

\begin{enumerate}
    \item $t\in (0,0.35)$. The moduli space is empty.
    
    \item $t\in (0.35,0.72)$. The moduli space consists of two components: 

    \begin{enumerate}
      \item 
      $K_{(2,3)}$: a smooth $12-$dimensional Kronecker moduli space. 
      \item 
      $M_{\mathscr{F}}$: a $\mathbb{P}^3$ bundle over a closed $5-$dimensional smooth flag variety $H\subset K_{(2,3)}$.
    \end{enumerate}
    \item $t\in (0.72,1]$. $M_{\mathscr{F}}$ disappears. $K_{(2,3)}$ is blown up along $H$, denoted by $\mathbf{B}:=Bl_H(K_{(2,3)})$. A new component $\mathbf{P}$ comes in, glued to $\mathbf{B}$ along the exceptional divisor of $\mathbf{B}$. $\mathbf{P}$ is the relative Simpson scheme over $\mathbb{P}^{3\vee}$, fibered by the scheme $\mathcal{M}^{3t+1}_{\mathbb{P}^2}$, the moduli space of Gieseker semistable sheaves with Hilbert polynomial $P_t=3t+1$ on $\mathbb{P}^2$. This is the Gieseker moduli space of class $v$ on $\mathbb{P}^3$.
\end{enumerate}
\end{Th}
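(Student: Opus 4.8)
The plan is to reduce everything to the finite-length structure of the heart $\mathscr{B}_1$ (equivalently $\mathscr{A}_1$) for $t\in(0,1]$ together with a Kronecker-quiver description of the objects of class $v$. First I would record that Riemann--Roch gives $\chi_t(E)=3t+1$, so $\chi'_t(E)=3$ and $Z^{\mathscr{B}_t}_t(E)=-(3t+1)+3i$. A numerical wall for a putative destabilizing sub- or quotient object $F$ is the locus where $Z_t(F)$ and $Z_t(E)$ become parallel, i.e. where $\chi'_t(F)(3t+1)=3\,\chi_t(F)$. Using the boundedness of walls (the bound $|t-2-ch_3/ch_2|\le ch_2+2\sqrt{2ch_2}$ stated above) together with the fact that $\mathscr{B}_1$ is a finite-length category generated by the exceptional collection $\mathscr{O}(-1),\dots,\mathscr{O}(-4)$, I would argue that only finitely many classes $[F]$ can produce a wall in $(0,1]$, enumerate them, and show that the surviving candidates are $F=\mathscr{O}_{\mathbb{P}^3}$ and $F=\mathscr{O}_{\Lambda}$.

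Next I would locate the two walls by solving the phase equations explicitly. For $F=\mathscr{O}_{\mathbb{P}^3}$, with $\chi_t(\mathscr{O})=\binom{t+3}{3}$, the equation $\chi'_t(\mathscr{O})(3t+1)=3\chi_t(\mathscr{O})$ reduces to $6t^3+21t^2+12t-7=0$, whose relevant root is $t\approx 0.35$; for $F=\mathscr{O}_{\Lambda}$, with $\chi_t(\mathscr{O}_\Lambda)=\binom{t+2}{2}$, it reduces to $3t^2+2t-3=0$, with root $t=\tfrac{-1+\sqrt{10}}{3}\approx 0.72$. I would then upgrade these numerical walls to actual walls by producing the destabilizing sequences: the sequence $0\to\mathscr{O}_{\mathbb{P}^3}\to E\to \mathscr{I}_C[1]\to 0$ at $W_1$ comes from the surjection $\mathscr{O}_{\mathbb{P}^3}\twoheadrightarrow\mathscr{O}_C$ onto the twisted cubic (so $Q=\mathscr{I}_C$), and the plane sequence at $W_2$ from the inclusion of $\mathscr{O}_\Lambda$; in each case one checks that both the sub- and the quotient object lie in $\mathscr{B}_1$ on the wall and have equal phase.

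For the chamber-by-chamber description I would proceed as follows. In $t\in(0,0.35)$, below $W_1$, I would show the moduli space is empty by checking that every object of class $v$ admits a Harder--Narasimhan factor of strictly larger phase, so no stable object exists. In $t\in(0.35,0.72)$ I would present the generic objects through the Hilbert--Burch-type map $\mathscr{O}(-3)^2\to\mathscr{O}(-2)^3$; since $\hom(\mathscr{O}(-3),\mathscr{O}(-2))=4$, such data is a representation of the $4$-arrow Kronecker quiver with dimension vector $(2,3)$, and GIT identifies the stable locus with the smooth Kronecker moduli space $K_{(2,3)}$ of dimension $4\cdot2\cdot3-2^2-3^2+1=12$. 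The second component $M_{\mathscr{F}}$ arises from the torsion-containing representative $\mathscr{F}$: its extension data is governed by the incidence $(\text{point}\in\text{plane})$, giving the $5$-dimensional flag variety $H\subset K_{(2,3)}$, over which $M_{\mathscr{F}}$ is realized as a $\mathbb{P}^3$-bundle. Crossing $W_2$, a local variation-of-GIT analysis of the sequence $0\to\mathscr{O}_\Lambda\to E\to\mathscr{F}_1\to 0$ shows that $M_{\mathscr{F}}$ is contracted, $K_{(2,3)}$ is modified by blowing up the locus $H$ to produce $\mathbf{B}=Bl_H(K_{(2,3)})$, and a new family of plane-supported sheaves $\mathbf{P}$ --- the relative Simpson scheme over $\mathbb{P}^{3\vee}$ with fiber $\mathcal{M}^{3t+1}_{\mathbb{P}^2}$ --- is glued in along the exceptional divisor.

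Finally, to identify the chamber $t\in(0.72,1]$ with the Gieseker moduli space, I would combine the asymptotic theorem (stability for $u\gg0$ coincides with Gieseker stability) by verifying that no further wall separates this chamber from the large-volume limit, and match the resulting two-component picture $\mathbf{B}\cup\mathbf{P}$ against the known description of the Gieseker moduli $M_{\mathbb{P}^3}(3m+1)$ of one-dimensional sheaves. The main obstacle I expect is the wall-crossing at $W_2$: proving that it is genuinely a blow-up of $K_{(2,3)}$ along $H$ together with the gluing of the new component $\mathbf{P}$ requires a delicate local study of the destabilizing extensions and their families, rather than the purely numerical wall computation, and controlling the global geometry of $\mathbf{P}$ and its intersection with $\mathbf{B}$ is the most technical step.
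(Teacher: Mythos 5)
Your overall route coincides with the paper's: the same two destabilizers $\mathscr{O}_{\mathbb{P}^3}$ and $\mathscr{O}_{\Lambda}$, the same phase equations (your cubics $6t^3+21t^2+12t-7=0$ and $3t^2+2t-3=0$ are exactly what one gets from $\chi'_t(F)(3t+1)=3\chi_t(F)$, with roots $\approx 0.35$ and $(-1+\sqrt{10})/3\approx 0.72$), the same Kronecker presentation $\mathscr{O}(-3)^2\to\mathscr{O}(-2)^3$ with $\dim K_{(2,3)}=12$, and the same blow-up-and-glue picture across $W_2$. Two steps, however, are genuinely underdeveloped rather than merely compressed. First, your mechanism for ruling out all other walls --- ``boundedness plus finiteness of numerical classes'' --- does not by itself work: plenty of numerical classes produce numerical walls inside $(0,1]$, and the issue is to show the corresponding \emph{objects} cannot destabilize. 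The paper's actual lever is Serre duality applied to the canonical quotient $E\twoheadrightarrow\mathscr{O}_{\mathbb{P}^3}(-4)[3]$ coming from the dimension vector $[1,6,9,4]$, which forces $\Hom(\mathscr{O}_{\mathbb{P}^3},E)\neq 0$ for \emph{every} $E$ of class $v$; this simultaneously proves emptiness of the first chamber (your HN-factor claim needs exactly this input) and forces any destabilizer $A$ to contain a quotient complex of $\mathscr{O}_{\mathbb{P}^3}$, after which a finite case analysis of dimension vectors (the paper's Tables 1 and 2, with explicit $\mathrm{Ext}^1$-vanishing and splitting arguments) eliminates everything except $\mathscr{O}_{\mathbb{P}^3}$ and $\mathscr{O}_{\Lambda}$. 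Without this, your enumeration has no starting point.

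Second, the identification of the third chamber is not just a variation-of-GIT statement to be invoked: the paper computes $\mathrm{Ext}^1(\mathscr{I}_C[1],\mathscr{O}_{\mathbb{P}^3})=\mathbb{C}$ versus $\mathrm{Ext}^1(\mathscr{F}[1],\mathscr{O}_{\mathbb{P}^3})=\mathbb{C}^4$ to get the two strata in chamber 2, then $\mathrm{Ext}^1(\mathscr{F}_1,\mathscr{O}_{\Lambda})=\mathbb{C}^9$ with $\mathrm{Ext}^1(\mathscr{F}_1,\mathscr{O}_{\Lambda'})=0$ for $\Lambda'\neq\Lambda$ to see that $M_{\mathscr{F}}$ dies and $\mathbf{P}$ is a $\mathbb{P}^8$-bundle over $H$, identifies the fibres over $\mathbb{P}^{3\vee}$ with the universal plane cubic through a point (hence $\mathscr{M}^{3t+1}_{\mathbb{P}^2}$), embeds $D=\mathbb{P}(\mathscr{N}^*_{H|K_{(2,3)}})$ into $\mathbf{P}$ via the normal-bundle map into $\mathrm{Ext}^1(\mathscr{F}_1,\mathscr{O}_{\Lambda})$, and finally glues $\mathbf{B}$ to $\mathbf{P}$ by an elementary modification of universal families (two applications of the octahedral axiom). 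You correctly flag this as the hardest step, but naming it is not proving it; in particular the stability of the exotic objects $Q[1]$ and $\mathscr{F}_1$ (the paper shows $\mathscr{F}_1$ is the unique stable complex of dimension vector $[0231]$) is needed before $W_1$ and $W_2$ can be certified as actual rather than merely numerical walls.
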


In \cite{MR2683291}, Maican proved that the functor $F: \mathscr{F} \to \mathcal{E}xt^{n-1}(\mathscr{F}, \omega_{\mathbb{P}^n})$ preserves Gieseker stability for sheaves on $\mathbb{P}^n$. It was generalized to Bridgeland stable complexes on $\mathbb{P}^2$ in \cite{MR3615584}. We prove the analogous result on $\mathbb{P}^3$, and a similar duality result holds on $\mathbb{P}^n$ ($n\in \mathbb{Z}_{\geq 0}$) as well.
For a class $v\in K_{num}(\mathbb{P}^3)$ and an object $E\in \mathscr{A}_t$ with this class, define its dual as $E^D:=R\mathscr{H}om(E,\omega_{\mathbb{P}^3})[2]$.

\begin{Th}
$E\in \mathscr{A}_t$ is (semi)stable with phase $\phi\in (0,1)$ 
if and only if $E^D[1]$ is 
(semi)stable in $\mathscr{A}_{-t}$ 
with phase $1-\phi$ for all $t\in \mathbb{R}$.
\end{Th}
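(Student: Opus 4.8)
The plan is to exhibit the assignment $\delta(E):=E^D[1]=R\mathscr{H}om(E,\omega_{\mathbb{P}^3})[3]$ as a single contravariant involution of $\mathcal{D}^b(\mathbb{P}^3)$ that interchanges the hearts $\mathscr{A}_t$ and $\mathscr{A}_{-t}$ while reflecting phases $\phi\mapsto 1-\phi$. Since $\mathbb{P}^3$ is smooth and projective, Grothendieck--Serre duality makes $R\mathscr{H}om(-,\omega_{\mathbb{P}^3})$ a contravariant triangulated autoequivalence, and biduality gives $\delta\circ\delta\cong\mathrm{id}$; in particular $\delta$ reverses the direction of every morphism and sends distinguished triangles to distinguished triangles. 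I would reduce the theorem to three statements: (i) $\delta(\mathscr{A}_t)=\mathscr{A}_{-t}$; (ii) a central-charge identity $Z_{-t}(\delta E)=-\overline{Z_t(E)}$ on the relevant class; and (iii) the formal deduction that a contravariant involution of hearts reflecting phases preserves (semi)stability.

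For (i), recall that $\mathscr{A}_t$ is the extension closure $\langle \mathscr{O}_{\mathbb{P}^3}(-k),\ \mathscr{O}_{\mathbb{P}^3}(-k-1)[1],\ \mathscr{O}_{\mathbb{P}^3}(-k-2)[2],\ \mathscr{O}_{\mathbb{P}^3}(-k-3)[3]\rangle$ with $k=\lceil t\rceil$, since its objects are exactly the complexes built from $\mathscr{O}(-k-j)^{a_{-j}}$ placed in cohomological degree $-j$. Using $\omega_{\mathbb{P}^3}=\mathscr{O}(-4)$ and $R\mathscr{H}om(\mathscr{O}(a),\mathscr{O}(b))=\mathscr{O}(b-a)$, a direct computation gives $\delta(\mathscr{O}(-k-j)[j])=\mathscr{O}(k+j-4)[3-j]$. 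For non-integer $t$ one has $\lceil -t\rceil=1-\lceil t\rceil$, so the generators of $\mathscr{A}_{-t}$ are precisely $\mathscr{O}(k-1-i)[i]$ for $i=0,\dots,3$; the substitution $i=3-j$ matches them bijectively with the images above. Because $\delta$ is contravariant and exact it sends short exact sequences in a heart to short exact sequences, so by induction on the length of a filtration by the generators we get $\delta(\mathscr{A}_t)\subseteq\mathscr{A}_{-t}$, and symmetrically $\delta(\mathscr{A}_{-t})\subseteq\mathscr{A}_t$; applying $\delta$ and using $\delta^2\cong\mathrm{id}$ upgrades these to the equality $\delta(\mathscr{A}_t)=\mathscr{A}_{-t}$. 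The integer values of $t$ are boundary walls and are covered by the rounding convention (equivalently by continuity from either side). I expect this heart-matching to be the main obstacle, since it is where the shift $[1]$ and the twist by $\omega_{\mathbb{P}^3}$ must conspire; the rest is formal once it is established.

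For (ii), I would compute Chern characters. Writing $v(E)=(0,0,ch_2,ch_3)$, the identity $ch(E^D)=ch(E)^{\vee}\cdot ch(\omega_{\mathbb{P}^3})$, where $ch_i^{\vee}=(-1)^i ch_i$, gives $v(\delta E)=(0,0,-ch_2,\,4ch_2+ch_3)$. Substituting into $\chi_t(E)=ch_2\,t+(ch_3+2ch_2)$ and $\chi'_t(E)=ch_2$ yields $\chi_{-t}(\delta E)=\chi_t(E)$ and $\chi'_{-t}(\delta E)=-\chi'_t(E)$, hence $Z_{-t}(\delta E)=\chi'_{-t}(\delta E)+i\,\chi_{-t}(\delta E)=-\overline{Z_t(E)}$. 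Since $-\overline{Z}$ has argument $\pi-\arg Z$, this is exactly the phase reflection $\phi_{-t}(\delta E)=1-\phi_t(E)$, and in particular $\delta$ preserves the condition $\phi\in(0,1)$.

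Finally, for (iii), a short exact sequence $0\to A\to E\to B\to 0$ in $\mathscr{A}_t$ becomes, after applying the contravariant exact $\delta$ and invoking (i), a short exact sequence $0\to \delta B\to \delta E\to \delta A\to 0$ in $\mathscr{A}_{-t}$. Thus $\delta$ induces an anti-isomorphism between the poset of subobjects of $E$ and that of $\delta E$, interchanging subobjects and quotients: subobjects $A\subset E$ correspond to quotients $\delta E\twoheadrightarrow \delta A$, and quotients $E\twoheadrightarrow B$ correspond to subobjects $\delta B\subset \delta E$. Combined with the phase reflection of (ii), a destabilizing subobject $A\subset E$ with $\phi(A)>\phi(E)$ produces a quotient $\delta E\twoheadrightarrow \delta A$ with $\phi(\delta A)=1-\phi(A)<1-\phi(E)=\phi(\delta E)$, and conversely, so by the quotient characterization of semistability $E$ is semistable if and only if $\delta E$ is; taking the strict inequalities throughout gives the corresponding statement for stability. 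Hence $E\in\mathscr{A}_t$ is $\sigma_t$-(semi)stable of phase $\phi$ if and only if $E^D[1]\in\mathscr{A}_{-t}$ is $\sigma_{-t}$-(semi)stable of phase $1-\phi$, for all $t\in\mathbb{R}$.
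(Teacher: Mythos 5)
Your proposal follows essentially the same route as the paper's proof of this statement (Proposition 5.3 and its surrounding remarks): dualize the representing complex of line bundles to land in the opposite heart with reversed dimension vector, verify the central-charge identity $Z_{-t}(E^D[1])=-\overline{Z_t(E)}$, and conclude by the sub/quotient exchange. Two points need attention, however. First, you only verify the central-charge identity on the one-dimensional class $(0,0,ch_2,ch_3)$, but step (iii) requires it for the \emph{subobjects} $A\subset E$, whose classes are arbitrary (e.g.\ $\mathscr{O}_{\mathbb{P}^3}$ has rank one); the identity does hold in general, since Serre duality gives the polynomial identity $\chi_{s}(E^D[1])=\chi_{-s}(E)$ for every $E$, whence $\chi'_{-t}(E^D[1])=-\chi'_t(E)$, so this is an easily repaired omission rather than a failure, but as written the phase-reflection claim for destabilizers is unsupported.

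Second, your dismissal of integer $t$ as ``covered by the rounding convention, equivalently by continuity'' is not correct. Your own computation shows $\delta$ sends the generators of $\mathscr{A}_t$ ($k=\lceil t\rceil$) to the generators of $\mathscr{A}_{1-k}$; for $t\in\mathbb{Z}$ one has $\lceil -t\rceil=-t\neq 1-t$, so $\delta(\mathscr{A}_t)=\mathscr{A}_{1-t}\neq\mathscr{A}_{-t}$. Concretely, at $t=0$ the object $\mathscr{O}_{\mathbb{P}^3}$ is stable in $\mathscr{A}_0$ with phase in $(0,1)$, yet $\mathscr{O}_{\mathbb{P}^3}^D[1]=\mathscr{O}_{\mathbb{P}^3}(-4)[3]$ lies in $\mathscr{A}_1$, not in $\mathscr{A}_0=\mathscr{A}_{-0}$. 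The hearts $\mathscr{A}_t$ jump at integers, so a continuity argument does not apply directly; the paper resolves this by restricting the proposition to $t\notin\mathbb{Z}$ and then, in a separate remark, modifying the heart at integer $t$ by a small rotation of the slicing (replacing $\mathscr{P}(0,1]$ by $\mathscr{P}(\phi,\phi+1]$) so that the duality again matches hearts. You should either impose $t\notin\mathbb{Z}$ or reproduce that tilt; otherwise the statement as you prove it is false at the integers.
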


In the end, we show an example in which an actual wall is built up from two pieces of distinct numerical walls. Unlike surfaces, where walls are nested (\cite{MR3217637}), walls on a threefold may intersect (\cite{2015arXiv150904608S}, \cite{jardim2019walls}). Let $C \subset \mathbb{P}^3$ be a rational quartic curve contained in a quadric surface $Q \subset \mathbb{P}^3$. We propose that its actual wall in the $(t-u)-$plane is the outermost parts of the numerical walls defined by these two sequences  

$$0\to \mathscr{O}_{\mathbb{P}^3}\to \mathscr{O}_{C} \to \mathscr{I}_{C}[1]\to 0 \quad \text{and} \quad 0\to \mathscr{O}_{Q}\to \mathscr{O}_{C} \to \mathscr{I}_{C/Q}[1] \to 0$$

The paper is organized as follows. In section 2, we give a brief review of Bridgeland stability conditions. In section 3, we introduce the one-dimensional Euler stability condition and show its relation with the double-tilt stability on $\mathbb{P}^3$. Section 4 is evidence for our main conjecture that for any one-dimensional class $v\in K_{num}(\mathbb{P}^3)$, Euler stable complexes are Gieseker stable sheaves for all large $t$. We will show the walls for one-dimensional classes and prove some asymptotic results for sheaves and complexes. In section 5, we show the duality results for stable objects in $\mathscr{A}_t$ and $\mathscr{B}_t$. In sections 6 and 7, we will focus on the fixed class $v=(0,0,3,-5)$, finding the walls for the class and describing the moduli spaces in each chamber. Finally, in section 8, we show an example that an actual wall is built up from distinct numerical walls.

\begin{Rem}
There is related recent work on walls and the asymptotic stability for threefolds of Picard rank $1$ in \cite{jardim2019walls}, \cite{pretti2021zero}, \cite{jardim2021vertc}.
\end{Rem}

\textbf{Acknowledgement:} I would like to express my gratitude to my advisor Professor Dr. Aaron Bertram, for his support, guidance, and patience during my time as his student at the University of Utah. I would like to thank Professor Emanuele Macr{\`i} for his kind suggestions, and my friends Huachen Chen, Ziwen Zhu for helpful conversations. I am also grateful to the department of mathematics at the University of Utah for their help and support. The author is currently supported by FAPESP grant number 2020/03499-0 which is part of the FAPESP Thematic Project 2018/21391-1.

\section{Background on Bridgeland stability conditions}

\subsection{Stability conditions}
In this section, we recall some definitions of Bridgeland Stability Conditions and the constructions on a smooth threefold $X$ over $\mathbb{C}$. We refer to the following articles for more details. 
\cite{Happel_1996} for tilting theory,
\cite{Bridgeland_2007}, \cite{Bridgeland_2008}, \cite{Macr__2017}, \cite{MR3221296} for Bridgeland stability conditions,
\cite{Macr__2014}, \cite{Bayer_2013}, \cite{MR3573975} for stability on a threefold.

\begin{Def}
The heart of a bounded t-structure on $D^b(X)$ is a full additive subcategory $\mathscr{A}\subset D^b(X)$ satisfying the following conditions, 
\begin{enumerate}[(a)]
    \item For integers $i>j$ and $A,B \in \mathscr{A}$, we have $Hom(A[i], B[j])=0$.
    \item For all $E\in D^b(X)$, there exists integers $k_1>...>k_m$ and objects $E_i\in D^b(X)$, $A_i\in \mathscr{A}$ for $i=1,2,...,m$ and a diagram consisting of distinguished triangles:
    
    \begin{tikzcd}
    0=E_0 \arrow[r]& E_1\arrow[r]\arrow[d] & E_2\arrow[r]\arrow[d]&...\arrow[r]& E_{m-1}\arrow[r]\arrow[d] & E_m\arrow[d]=E\\
    {}&A_1[k_1]\arrow[ul,dotted]& A_2[k_2]\arrow[ul, dotted]&{}&A_{m-1}[k_{m-1}]\arrow[ul,dotted]&A_{m}[k_m]\arrow[ul,dotted]\\
    \end{tikzcd}
\end{enumerate}
\end{Def}

The heart of a bounded t-structure is indeed an abelian category. The proof can be found in \cite{MR751966} and \cite{Macr__2017}.
  
Let $\mathscr{A}$ be an abelian category. $K_0(\mathscr{A})$, $K_{num}(\mathscr{A})$ denote the K-group and numerical K-group of $\mathscr{A}$ respectively.

\begin{Def}
 A linear function $Z: K_0(\mathscr{A})\to \mathbb{C}$ is called a central charge (or a stability function) if for every $E\in \mathscr{A}$, $\mathscr{I}m(Z(E))\geq 0$, and if $\mathscr{I}m(Z(E))= 0$ then $\mathscr{R}e(Z(E))<0$.
\end{Def}

With the heart $\mathscr{A}$ and the central charge $Z$, we can define the (semi-)stability of an object in $\mathscr{A}$ as follows:

\begin{Def}
Let $Z: K_0(\mathscr{A})\to \mathbb{C}$ be a central charge. Define the slope as $\mu:=-{\mathscr{R}e}/{\mathscr{I}m}$. A non-zero object $E\in \mathscr{A}$ is called (semi-)stable if for all non-trivial subobject $F\subset E \in \mathscr{A}$, $\mu(F)(\leq) < \mu(E)$ holds.
\end{Def}

Fix a finite rank lattice $\Lambda$ with a fixed norm $||\cdot||$ on it. Let $v$ be a surjective linear map, $ v: K_0(\mathscr{A})\twoheadrightarrow \Lambda$. For a Bridgeland stability condition, the central charge $Z$ is required to factor through a finite rank lattice, i.e. $Z: K_0(\mathscr{A})\overset{v}{\to}\Lambda \to \mathbb{C}$. The lattice is usually chosen as the numerical classes, i.e. $\Lambda=(H^nch_0, H^{n-1}ch_1, ... , H^{n-l}ch_l )\subset K_{num}(X)$. 

Next, we define the Harder-Narasimhan property and the support property. A pair $\sigma =(\mathscr{A}, Z)$ satisfying the Harder-Narasimhan property is sometimes called a pre-stability. It is a stability condition if it also satisfies the support property.

\begin{Def}
A Bridgeland Stability Condition on $X$ is a pair $\sigma =(\mathscr{A}, Z)$ consisting of a heart of a bounded t-structure $\mathscr{A}\subset D^b(X)$, a central charge $Z: \mathscr{A}\to \Lambda \to  \mathbb{C}$, and two more properties as follows, 

\begin{enumerate}[(a)]
    
    \item Every object $E\in \mathscr{A}$ satisfies the Harder-Harasimhan property:

    $E$ has a finite filtration $0=E_0\subset E_1,\subset ..., \subset E_n=E$, in $\mathscr{A}$ such that the quotients $F_i:=E_i/E_{i-1}$ are semi-stable and their slopes are strictly decreasing, i.e. $\mu(v(F_1))>\mu(v(F_2))>...>\mu(v(F_n))$.
    
    \item $\sigma$ satisfies the support property: 
    
    For a fixed norm $||\cdot ||$ on $\Lambda$, there exists a $C>0$ such that for all semi-stable object $E\in \mathscr{A}$, $||v(E)||\leq C|Z(E)|$ holds.

\end{enumerate}
\end{Def}

The support property can also be defined by a bilinear form $Q$ on the lattice (\cite{MR3573975}, \cite{MR3905133}):

\begin{Def}
A pre-stability condition $\sigma=(\mathscr{A},Z)$ satisfies the support propery if there is a bilinear form $Q$ on $\Lambda\otimes \mathbb{R}$ such that 
\begin{enumerate}[(a)]
    \item All $\sigma$- semistable objects $E\in \mathscr{A}$ satisfy the inequality $Q(v(E),v(E))\geq 0$.

    \item All non-zero vectors $v\in \Lambda\otimes \mathbb{R}$ with $Z(v)=0$ satisfy $Q(v,v)<0$.
\end{enumerate}
\end{Def}

The Bridgeland stability condition can be equivalently  defined on a triangulated category as well (\cite{Bridgeland_2007}, \cite{Macr__2017}).

\begin{Def}
A slicing $\mathscr{P}$ of $D^b(X)$ is a collection of subcategories $\mathscr{P}(\phi)\subset D^b(X)$ for every $\phi\in\mathbb{R} $, such that

\begin{enumerate}[(a)]
    \item $\mathscr{P}(\phi)[1]=\mathscr{P}(\phi+1)$,
    
    \item For $A\in \mathscr{P}(\phi_1)$, and $B\in \mathscr{P}(\phi_2)$, if $\phi_1>\phi_2$ then $Hom(A,B)=0$.
    
    \item For any $E\in D^b(X)$, there are real numbers $\phi_1>...>\phi_m$ and distinguished triangles in $D^b(X)$
    
    \begin{tikzcd}
    0=E_0 \arrow[r]& E_1\arrow[r]\arrow[d] & E_2\arrow[r]\arrow[d]&...\arrow[r]& E_{m-1}\arrow[r]\arrow[d] & E_m\arrow[d]=E\\
    {}&A_1\arrow[ul,dotted]& A_2\arrow[ul, dotted]&{}&A_{m-1}\arrow[ul,dotted]&A_{m}\arrow[ul,dotted]\\
    \end{tikzcd}
    
    such that $A_i \in \mathscr{P}(\phi_i)$.
    
\end{enumerate}
\end{Def}

The last property is the Harder-Narasimhan filtration of $E$ in $D^b(X)$.

\begin{Def}
A Bridgeland Stability condition on $D^b(X)$ is a pair $\sigma=(\mathscr{P},Z)$ where $\mathscr{P}$ is a slicing, $Z: \Lambda \to \mathbb{C}$ is a linear map (the central charge), satisfying the following two properties:

\begin{enumerate}
    \item For any non-zero $E\in \mathscr{P}(\phi)$, we have 
    
    $$Z(v(E))\in \mathbb{R}_{>0}\cdot e^{\sqrt{-1}\pi\phi}$$
    \item (support property) There exists a constant $C>0$, such that $||v(E)||\leq C|Z(E)|$ for any $0\neq E \in \mathscr{P}(\phi), \phi\in \mathbb{R}$.
\end{enumerate}
\end{Def}

Let $\mathscr{A}:=\mathscr{P}((0,1])$ be the extension closure of slices $\left\{\mathscr{P}(\phi):\phi\in (0,1]\right\}$. The category $\mathscr{A}$ turns out to be the heart of a bounded t-structure. Moreover, the two stability conditions, $\sigma_1=(\mathscr{P}, Z)$ and $\sigma_2=(\mathscr{A},Z)$ are equivalent (\cite{Bridgeland_2007} Prop $5.3$).

\subsection{Construction of stability conditions on a smooth threefold.}

In this subsection, we recall a construction of stability conditions, the tilting approach, and some known results on a smooth variety up to dimension $3$.

We start by defining the twisted Chern character.

\begin{Def}{\label{twisted chern character}}
For any $B\in NS_{\mathbb{R}}(\mathbb{P}^3)$, define the twisted Chern characters as $ch^B:=ch\cdot e^{-B}$.
\end{Def}

By expanding $ch\cdot e^{-B}$, we have the first few twisted Chern characters as:

\begin{align*}
ch^B_0=&ch_0\\
ch^B_1=&ch_1-B\cdot ch_0\\
ch^B_2=&ch_2-B\cdot ch_1+\frac{B^2}{2}ch_0\\
ch^B_3=&ch_3-B\cdot ch_2+\frac{B^2}{2}ch_1-\frac{B^3}{6}ch_0\\
\cdots
\end{align*}

Stability conditions on a smooth curves $C$ is essentially the pair $\sigma=(Coh(C), Z=-deg+i\cdot rk)$ (\cite{Bridgeland_2007} \cite{Macr__2007}), which looks like the Mumford stability for vector bundles on curves. 

For a smooth surface $X$, $Coh(X)$ can no longer be used as the heart to define stability conditions. In fact, for dim$X\geq 2$, the pair $(Coh(X),Z)$ is not a stability condition for any central charge $Z$. A proof can be found in \cite{MR2541209} (Lemma $2.7$). To obtain the correct heart, we tilt the category $Coh(X)$.

For the rest of this subsection, we will focus on the construction on $\mathbb{P}^3$. The similar approach works for some other smooth varieties as well. (\cite{Bridgeland_2008}, \cite{MR2998828}, \cite{MR2852118} for surfaces, \cite{MR3262194} for quadric threefolds, \cite{MR3370123}, \cite{MR3454685}, \cite{MR3573975} for abelian threefolds, \cite{MR3908763},\cite{MR3743105} for Calabi-Yau threefolds, \cite{Bayer_2013} for a conjectural construction on all smooth varieties.)

We start with the first tilt of $Coh(\mathbb{P}^3)$. 

Let $H$ be the ample class $\mathscr{O}_{\mathbb{P}^3}(1)$ on $\mathbb{P}^3$. $B\in NS_{\mathbb{R}}(\mathbb{P}^3)$ is a multiple of $H$, $B=\beta H$. For simplicity, we write $ch^{\beta}$ instead of $ch^{\beta H}$.

Define the slope function on $Coh(\mathbb{P}^3)$ as $$\mu_{\beta}:=\frac{ch^{\beta}_1}{ch_0}=\frac{ch_1}{ch_0}-\beta$$

The slope function defines the following torsion pair:

$$\mathscr{T}_{\beta}:=\{E\in Coh(\mathbb{P}^3): \forall E\twoheadrightarrow Q\neq 0, \mu_{\beta}(Q)>0 \}$$

$$\mathscr{F}_{\beta}:=\{E\in Coh(\mathbb{P}^3): \forall 0\neq F\hookrightarrow E,  \mu_{\beta}(F)\leq 0 \}$$

A new heart of a bounded t-structure is defined as the extension closure  $Coh^{\beta}(\mathbb{P}^3):=\left< \mathscr{F}_{\beta}[1], \mathscr{T}_{\beta} \right>$. 
Let $\Gamma$ be the lattice $\Gamma:=(H^3ch_0,H^2ch_1,Hch_2)$. Define a linear function $Z_{\alpha,\beta}: \Gamma \to \mathbb{C}$ as $\displaystyle Z_{\alpha, \beta}:=-Hch^{\beta}_2+\frac{\alpha^2}{2}H^3ch_0+i\cdot H^2ch^{\beta}_1$, in which $\alpha \in \mathbb{R}_{\geq 0}$. The pair $(Coh^{\beta}(\mathbb{P}^3), Z_{\alpha, \beta})$ is called a tilt stability on $\mathbb{P}^3$ (\cite{Bayer_2013}). It is a weak stability condition since $Z_{\alpha, \beta}$ maps some non zero objects (for instance, skyscraper sheaves) to $0$. 

Let $\displaystyle \nu_{\alpha, \beta}:=\frac{Hch^{\beta}_2-\frac{\alpha^2}{2}H^3ch_0}{H^2ch^{\beta}_1}$ be the slope function defined by $Z_{\alpha, \beta}$. We have the following Bogomolov inequality for tilt stable objects:

\begin{Th}(\cite{Bayer_2013}, Cor $7.3.2$). For any $\nu_{\alpha, \beta}$ semistable objects $E\in Coh^{\beta}(X)$, the following inequality holds
$$\overline{\Delta}(E):=(H^2ch^{\beta}_1)^2-2H^3ch^{\beta}_0\cdot Hch^{\beta}_2(E)\geq 0$$
\end{Th}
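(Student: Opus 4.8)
The plan is to recognize this inequality as the first half of the support property for tilt stability, taking the bilinear form $Q:=\overline{\Delta}$ itself, and then to bootstrap from the classical Bogomolov inequality for slope-semistable sheaves by deforming the parameter $\alpha$. The starting observation is that $\overline{\Delta}$ is insensitive to the stability parameters: expanding $ch^{\beta}=ch\cdot e^{-\beta H}$ one checks directly that $\overline{\Delta}(E)=(H^2ch_1)^2-2H^3ch_0\cdot Hch_2$, so it depends on neither $\beta$ (it is the twist-invariant Bogomolov discriminant) nor $\alpha$ (which never appears). Consequently it suffices to prove $\overline{\Delta}(E)\ge 0$ for $E$ that is $\nu_{\alpha,\beta}$-semistable at any one conveniently chosen point, and I am free to move that point. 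I would also record the companion computation that $\overline{\Delta}$ is negative on $\ker Z_{\alpha,\beta}$: a nonzero class with $Z_{\alpha,\beta}=0$ has $H^2ch^{\beta}_1=0$ and $Hch^{\beta}_2=\tfrac{\alpha^2}{2}H^3ch_0$, whence $\overline{\Delta}=-\alpha^2(H^3ch_0)^2<0$. This is exactly part (b) of the $Q$-formulation of the support property.

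The abstract engine is a Lorentzian convexity lemma for $\overline{\Delta}$. Suppose $0\to A\to E\to B\to 0$ is a destabilizing sequence occurring on a wall, so that $Z_{\alpha,\beta}(A)$ and $Z_{\alpha,\beta}(B)$ are positive real multiples of one another (equal slope and phase). Then the real $2$-plane spanned by $v(A)$ and $v(B)$ meets $\ker Z_{\alpha,\beta}$ in a nonzero vector, on which $\overline{\Delta}<0$ by the computation above; hence $\overline{\Delta}$ restricted to this plane is a Lorentzian form of signature $(1,1)$. The region $\{\overline{\Delta}\ge 0\}$ inside such a plane is a union of two opposite convex nappes, and the common-phase condition forces $v(A)$ and $v(B)$ into the same nappe. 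Convexity then yields both $\overline{\Delta}(E)\ge 0$ and superadditivity $\overline{\Delta}(E)\ge \overline{\Delta}(A)+\overline{\Delta}(B)$, provided $\overline{\Delta}(A),\overline{\Delta}(B)\ge 0$.

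It remains to feed the engine a base case and an induction. For the base case I would pass to the large-volume limit $\alpha\to\infty$, where the $\nu_{\alpha,\beta}$-semistable objects are exactly the $\mu_{\beta}$-semistable torsion-free sheaves in $\mathscr{T}_{\beta}$, the shifts $F[1]$ of $\mu_{\beta}$-semistable $F\in\mathscr{F}_{\beta}$, and the sheaves supported in dimension $\le 1$. For the first two, $\overline{\Delta}\ge 0$ is the classical Bogomolov inequality for slope-semistable sheaves on $\mathbb{P}^3$ (reduce to a surface by the Mehta--Ramanathan restriction theorem and invoke Bogomolov there); for a sheaf with $ch_0=0$ one has $\overline{\Delta}=(H^2ch_1)^2\ge 0$ directly. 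For a general point $(\alpha_0,\beta_0)$ and $E$ semistable there, I would increase $\alpha$: by local finiteness only finitely many walls are met, and if $E$ survives to the limit we are done. Otherwise $E$ is destabilized at a first wall into Jordan--H\"older factors of equal phase and strictly smaller $H^2ch^{\beta}_1$; inducting on this nonnegative integral quantity gives $\overline{\Delta}\ge 0$ for each factor, and the Lorentzian lemma assembles these into $\overline{\Delta}(E)\ge 0$.

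The main obstacle is not the convexity lemma, which is formal once $\overline{\Delta}|_{\ker Z_{\alpha,\beta}}<0$ is verified, but rather the two geometric inputs that make the induction run: the precise identification of tilt-semistable objects in the large-volume limit (ruling out exotic semistable complexes and correctly accommodating torsion), and the local finiteness of walls together with the guarantee that the induction variable $H^2ch^{\beta}_1$ strictly drops across every wall so that the process terminates. Establishing these is exactly where the threefold geometry, as opposed to the surface case, requires genuine care.
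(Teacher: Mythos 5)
This statement is not proved in the paper at all: it is quoted verbatim as Corollary 7.3.2 of \cite{Bayer_2013}, so there is no in-paper argument to compare against. Your outline --- $\beta$-invariance of $\overline{\Delta}$, negativity on $\ker Z_{\alpha,\beta}$, the Lorentzian convexity lemma, reduction to the large-volume limit where classical Bogomolov (via restriction to a hyperplane) applies, and a wall-crossing induction on $H^2 ch^{\beta}_1$ --- is essentially the proof given in that cited reference, so it takes the same approach as the paper's source.
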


For smooth surfaces, a single tilt would be sufficient to define stability conditions ( \cite{Bridgeland_2008}, \cite{MR2998828}, \cite{MR2852118}). For a threefold, we need an additional tilt.

Define a torsion pair on $Coh^{\beta}(\mathbb{P}^3)$ as

$$\mathscr{T}_{\alpha,\beta}:=\{E\in Coh^{\beta}(\mathbb{P}^3): \forall E\twoheadrightarrow Q\neq 0, \nu_{\alpha, \beta}(Q)>0 \}$$

$$\mathscr{F}_{\alpha,\beta}:=\{E\in Coh^{\beta}(\mathbb{P}^3): \forall 0\neq F\hookrightarrow E,  \nu_{\alpha, \beta}(F)\leq 0 \}$$

Similarly, we obtain a heart of a bounded t-structure as $\mathscr{A}^{\alpha, \beta}(\mathbb{P}^3):=\left< \mathscr{F}_{\alpha,\beta}[1], \mathscr{T}_{\alpha,\beta} \right>$. The central charge and the bilinear form $Q$ defining the support property are given as follows: (\cite{Bayer_2013},\cite{Macr__2014}, \cite{MR3597844}) 
\[
\begin{array}{rl}
Z_{\alpha, \beta, s}:= & -ch^{\beta}_3+(s+\frac{1}{6})\alpha^2H^2ch^{\beta}_1+i\cdot(Hch^{\beta}_2-\frac{\alpha^2}{2}H^3ch^{\beta}_0)\\
Q_{\alpha, \beta, K}(E)= & ((ch_1(E))^2-2ch_0(E)ch_2(E))(K\alpha^2+\beta^2)+\\
&(6ch_0(E)ch_3(E)-2ch_1(E)ch_2(E))\beta-6ch_1(E)ch_3(E)+4(ch_2(E))^2\\
\end{array}
\]

\begin{Th}
(\cite{Bayer_2013},\cite{Macr__2014}).
$(\mathscr{A}^{\alpha,\beta}((\mathbb{P}^3)), Z_{\alpha, \beta, K})$ is a Bridgeland Stability Condition on $\mathbb{P}^3$ for all $\beta\in \mathbb{R}$, and $\alpha,s>0$. The support property is satisfied with respect to $Q_{\alpha, \beta, K}$. 
\end{Th}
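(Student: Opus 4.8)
The plan is to follow the Bayer--Macr{\`i}--Toda program for threefolds. The heart $\mathscr{A}^{\alpha,\beta}(\mathbb{P}^3)=\langle \mathscr{F}_{\alpha,\beta}[1],\mathscr{T}_{\alpha,\beta}\rangle$ has already been produced as the tilt of $Coh^{\beta}(\mathbb{P}^3)$ at the slope $\nu_{\alpha,\beta}$, hence is automatically the heart of a bounded t-structure, so the proof reduces to three tasks: (i) check that $Z_{\alpha,\beta,s}$ is a central charge on $\mathscr{A}^{\alpha,\beta}$; (ii) establish the support property with respect to $Q_{\alpha,\beta,K}$; and (iii) deduce the Harder--Narasimhan property. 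The genuine difficulty is concentrated in a single Bogomolov--Gieseker-type inequality, from which (i) and the main half of (ii) will both follow.

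First I would verify (i). The imaginary part $\Im Z_{\alpha,\beta,s}=Hch^{\beta}_2-\tfrac{\alpha^2}{2}H^3ch^{\beta}_0$ is exactly the numerator of the second-tilt slope $\nu_{\alpha,\beta}$, so by the very definition of the torsion pair $(\mathscr{T}_{\alpha,\beta},\mathscr{F}_{\alpha,\beta})$ one gets $\Im Z\geq 0$ on both $\mathscr{T}_{\alpha,\beta}$ and $\mathscr{F}_{\alpha,\beta}[1]$, hence on all of $\mathscr{A}^{\alpha,\beta}$; this is the easy half. The delicate half is that $\Re Z(E)<0$ whenever $\Im Z(E)=0$. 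Analyzing the $Coh^{\beta}$-cohomology shows that such an $E$ is built from $\nu_{\alpha,\beta}$-semistable objects $G$ of slope $\nu_{\alpha,\beta}(G)=0$, which land in $\mathscr{F}_{\alpha,\beta}$ and thus contribute $G[1]$; for these the condition $\Re Z(G[1])=ch^{\beta}_3(G)-(s+\tfrac16)\alpha^2H^2ch^{\beta}_1(G)<0$ is exactly an upper bound on $ch^{\beta}_3(G)$. (Skyscraper-type objects, with $\nu=\infty$, are handled separately and trivially, since there $\Re Z=-ch_3<0$.)

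The main obstacle is therefore the generalized Bogomolov--Gieseker inequality, special to $\mathbb{P}^3$: every $\nu_{\alpha,\beta}$-tilt-semistable $G\in Coh^{\beta}(\mathbb{P}^3)$ with $\nu_{\alpha,\beta}(G)=0$ satisfies $ch^{\beta}_3(G)\leq \tfrac16\alpha^2H^2ch^{\beta}_1(G)$; since $s>0$, this forces $\Re Z<0$ and simultaneously supplies the extra inequality needed for $Q$. I would prove it following the argument of \cite{Macr__2014}. The classical Bogomolov inequality $\overline{\Delta}(G)\geq 0$ already controls the lower Chern characters, so the new content is the bound on $ch_3$. The strategy is to vary $(\alpha,\beta)$ and examine the numerical walls of the fixed class $v(G)$ in the tilt plane: crossing a wall produces a tilt-destabilizing subobject of strictly smaller $\overline{\Delta}$, so an induction on $\overline{\Delta}$ reduces to objects tilt-stable along an entire ray, where the special geometry of $\mathbb{P}^3$ -- the exceptional collection $\mathscr{O},\mathscr{O}(1),\mathscr{O}(2),\mathscr{O}(3)$ together with sharp restriction-to-hyperplane estimates -- identifies the extremal objects as twists and shifts of line bundles and forces the inequality. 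This is the technical heart of the construction and the step I expect to absorb nearly all of the effort.

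Granting the inequality, (ii) and (iii) are formal. For the support property I would present $Q_{\alpha,\beta,K}$ as the combination of $\overline{\Delta}\geq 0$ and the $ch_3$-bound: condition (a), that $Q(v(E),v(E))\geq 0$ for semistable $E$, follows from the two inequalities together, while condition (b), that $Q(v,v)<0$ on $\ker Z$, is a direct algebraic computation in the Chern characters and is where the admissible range of $K$ (equivalently of $s$) is pinned down. Finally, the Harder--Narasimhan property follows from general machinery: $\mathscr{A}^{\alpha,\beta}$ is Noetherian, and combined with the support property this yields the HN filtrations by the Abramovich--Polishchuk argument invoked in \cite{Bayer_2013}, upgrading the resulting pre-stability condition to a genuine Bridgeland stability condition.
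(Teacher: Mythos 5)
The paper does not prove this theorem at all: it is quoted as background from \cite{Bayer_2013} and \cite{Macr__2014} (with the explicit quadratic form taken from \cite{MR3597844} and \cite{MR3573975}), so there is no in-paper argument to compare against. Judged against the proofs in those references, your outline reproduces their architecture faithfully: the reduction of the central-charge positivity to a generalized Bogomolov--Gieseker inequality for $\nu_{\alpha,\beta}$-semistable objects of tilt slope zero, the verification of that inequality on $\mathbb{P}^3$ via wall-crossing/deformation to a region governed by the exceptional collection (the content of \cite{Macr__2014}), the support property as a repackaging of $\overline{\Delta}\geq 0$ together with the $ch_3$-bound, and the Harder--Narasimhan property from Noetherianity of the double-tilted heart plus the Abramovich--Polishchuk machinery. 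Two small caveats: the claim that $\Im Z\geq 0$ follows ``by the very definition of the torsion pair'' glosses over the objects with $H^2ch_1^{\beta}=0$ (tilt slope $+\infty$), for which one needs the classical Bogomolov inequality for tilt-semistable objects, exactly as in Lemma 3.2.1 of \cite{Bayer_2013}; and your third paragraph states rather than proves the generalized BG inequality, which is the entire technical content of \cite{Macr__2014} --- acceptable here since the theorem is itself a citation, but it should be flagged as the step that is being imported wholesale rather than established.
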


Lastly, for a smooth threefold $X$ and a given class $v\in K_{num}(X)$, there is the following continuity result.

\begin{Th} (\cite{MR3573975} Prop $8.10$)
The function $\displaystyle \mathbb{R}_{>0}\times \mathbb{R}\times \mathbb{R}_{>0} \to Stab(X,v)$ defined by 

$\displaystyle (\alpha, \beta, s) \mapsto (\mathscr{A}^{\alpha, \beta}(X), Z_{\alpha, \beta, s})$ is continuous.
\end{Th}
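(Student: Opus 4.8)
The plan is to deduce continuity from Bridgeland's deformation theorem together with the uniform support property supplied by the quadratic form $Q_{\alpha,\beta,K}$. Recall that the topology on $Stab(X,v)$ is the one induced by the generalised metric comparing, for every nonzero $E$, the extremal Harder--Narasimhan phases $\phi^{\pm}_\sigma(E)$ and the logarithm of the mass $m_\sigma(E)$, and that Bridgeland's foundational theorem (\cite{Bridgeland_2007}) asserts that with this topology the forgetful map $\mathcal{Z}\colon Stab(X,v)\to \Hom(\Lambda,\mathbb{C})$, $\sigma=(\mathscr{A},Z)\mapsto Z$, is a local homeomorphism onto an open subset. By the preceding theorem each triple $(\alpha,\beta,s)$ with $\alpha,s>0$ really does define a point $\sigma_{\alpha,\beta,s}=(\mathscr{A}^{\alpha,\beta},Z_{\alpha,\beta,s})\in Stab(X,v)$ satisfying the support property with respect to $Q_{\alpha,\beta,K}$, so the map in the statement is well defined and it remains only to establish continuity.

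The easy half is to record that the composite $(\alpha,\beta,s)\mapsto Z_{\alpha,\beta,s}\in\Hom(\Lambda,\mathbb{C})$ is continuous: reading the explicit formula for $Z_{\alpha,\beta,s}$, each coefficient is obtained by applying to the fixed class $v$ an expression that is polynomial in $\alpha^2$ and $s$ and in the twisted characters $ch^\beta$ (themselves polynomial in $\beta$), so $(\alpha,\beta,s)\mapsto Z_{\alpha,\beta,s}$ is in fact real-analytic. The substance is to lift this continuity through the local homeomorphism $\mathcal{Z}$, i.e. to show that the slicings vary continuously and that the family does not ``jump sheets''. Fix a base point $(\alpha_0,\beta_0,s_0)$, write $\sigma_0=\sigma_{\alpha_0,\beta_0,s_0}$ and $Z_0=Z_{\alpha_0,\beta_0,s_0}$, and set $Q:=Q_{\alpha_0,\beta_0,K}$. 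Since $\sigma_0$ satisfies the support property with respect to $Q$, the real kernel $\ker_{\mathbb{R}}Z_0$ is negative definite for $Q$; negative definiteness on the kernel is an open condition, so it persists for all $Z_{\alpha,\beta,s}$ with $(\alpha,\beta,s)$ in a small neighbourhood of the base point, and (shrinking if necessary) the single form $Q$ witnesses the support property uniformly there. Bridgeland's deformation theorem then furnishes an $\epsilon>0$ and a neighbourhood $U\ni Z_0$ such that every $W\in U$ whose real kernel is negative definite for $Q$ is the central charge of a unique stability condition lying in the metric ball $B_\epsilon(\sigma_0)$, depending continuously on $W$.

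The final step is to identify this deformation-theoretic lift with the explicitly constructed $\sigma_{\alpha,\beta,s}$. For $(\alpha,\beta,s)$ near the base point we have $Z_{\alpha,\beta,s}\in U$ by the previous step, so it admits a unique lift $\tau\in B_\epsilon(\sigma_0)$; on the other hand $\sigma_{\alpha,\beta,s}$ is itself a stability condition with the same central charge $Z_{\alpha,\beta,s}$ satisfying the support property with respect to (a form arbitrarily close to) $Q$. By the uniqueness in Bridgeland's theorem it therefore suffices to check that $\sigma_{\alpha,\beta,s}\in B_\epsilon(\sigma_0)$, after which $\tau=\sigma_{\alpha,\beta,s}$ and continuity at the base point follows from the continuity of $W\mapsto\tau$; since the base point was arbitrary this gives continuity everywhere.

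I expect this last identification to be the main obstacle, because it requires controlling the full slicing rather than merely the central charge across the a priori different heart presentations $\mathscr{A}^{\alpha,\beta}$ and $\mathscr{A}^{\alpha_0,\beta_0}$, which change by tilting as the parameters cross numerical walls. The uniform support property with respect to $Q$ is precisely what prevents the phases of semistable objects from jumping: combined with the Bogomolov-type inequality $\ovl{\Delta}\geq 0$ for tilt-semistable objects, the inequality $Q\geq 0$ on semistable classes bounds the numerical invariants of any potential destabiliser in terms of $|Z|$, yielding local finiteness of walls and hence the estimate $d(\sigma_{\alpha,\beta,s},\sigma_0)<\epsilon$ needed to keep the explicit family inside the metric ball for parameters sufficiently close to the base point.
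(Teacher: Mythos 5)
This statement is quoted in the paper as an external result (it is Proposition 8.10 of \cite{MR3573975}); the paper itself gives no proof, so there is nothing internal to compare against, and your proposal has to stand on its own. Your overall strategy --- continuity of $(\alpha,\beta,s)\mapsto Z_{\alpha,\beta,s}$, openness of negative definiteness of $Q$ on $\ker_{\mathbb{R}}Z$, the quadratic-form version of Bridgeland's deformation theorem producing a unique nearby lift, and then identifying that lift with the explicitly constructed $\sigma_{\alpha,\beta,s}$ --- is exactly the strategy of the cited reference, and the first three steps are carried out correctly.

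The gap is in the final identification, which you yourself flag as ``the main obstacle'' and then address only heuristically. Everything before it is formal; the entire content of the theorem is the estimate $d(\sigma_{\alpha,\beta,s},\sigma_0)<\epsilon$, i.e.\ that for every object $E$ the extremal HN phases and the mass with respect to $\sigma_{\alpha,\beta,s}$ converge to those with respect to $\sigma_0$. Your closing paragraph asserts that the uniform support property ``prevents the phases of semistable objects from jumping'' and ``yields local finiteness of walls and hence the estimate,'' but neither implication is proved, and neither is automatic: the hearts $\mathscr{A}^{\alpha,\beta}$ are built from two successive tilts whose torsion pairs themselves depend on $(\alpha,\beta)$, so one must first show that a $\sigma_0$-semistable object of phase $\phi$ lies in $\mathscr{P}_{\alpha,\beta,s}\bigl((\phi-\epsilon,\phi+\epsilon)\bigr)$ for nearby parameters --- in practice by first establishing the analogous continuity and local wall-finiteness for the weak tilt stability $\nu_{\alpha,\beta}$ (where the Bogomolov inequality $\overline{\Delta}\geq 0$ and the support form bound the classes of destabilizers with $|Z|$ bounded), and then bootstrapping to the second tilt. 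Local finiteness of walls is moreover a statement about the explicit two-parameter family, not about abstract deformations of the fixed $\sigma_0$, so it cannot be extracted from the deformation theorem you have already invoked without circularity. As written, the proposal is a correct reduction of the theorem to its hard step, not a proof of it.
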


\section{The Euler stability}

In this section, we start with defining the Euler stability on $\mathbb{P}^n$ based on \cite{altavilla2019euler}. See \cite{altavilla2019euler} for further details and applications on Euler stability. And then in section $3.2$, we focus on the three-dimensional case and show that the Euler stability on $\mathbb{P}^3$ can also be defined by the tilting approach.

\subsection{The Euler stability condition on $\mathbb{P}^n$}

We start with defining the category.

\begin{Def}
Define $\mathscr{A}_m:=\left< \mathscr{O}_{\mathbb{P}^n}(-m-n)[n], \mathscr{O}_{\mathbb{P}^n}(-m-n+1)[n-1], ... , \mathscr{O}_{\mathbb{P}^n}(-m) \right>$ ($m\in \mathbb{Z}$) as the extension closure of the exceptional collection $\{ \mathscr{O}_{\mathbb{P}^n}(-m-n)[n], \mathscr{O}_{\mathbb{P}^n}(-m-n+1)[n-1], ...,, \mathscr{O}_{\mathbb{P}^n}(-m) \}$ on $\mathbb{P}^n$.
\end{Def}

A general element $E \in \mathscr{A}_m$ is a complex: 

$$E=\left[ \mathscr{O}^{a_{-n}}_{\mathbb{P}^n}(-m-n)\to \mathscr{O}^{a_{-n+1}}_{\mathbb{P}^n}(-m-n+1) \to ... \to \mathscr{O}^{a_0}_{\mathbb{P}^n}(-m)  \right]$$ 

where $a_i\in \mathbb{Z}_{\geq 0}$ for $i=0, -1, ..., -n$. The (n$+$1)-tuple $[a_{-n}, ..., a_0]$ is called the dimension vector of $E$ in $\mathscr{A}_m$, i.e. $\underline{dim}(E):=[a_{-n}, ..., a_0]$ .

Next, we define the central charge as $Z_t:=\chi'_t+i\cdot \chi_t$, where $\chi_t$ is the twisted Euler characteristic 

$$\chi_t(E)=\int_{\mathbb{P}^n}ch(E)\cdot ch(\mathscr{O}_{\mathbb{P}^n}(t))\cdot Td(\mathbb{P}^n)$$
and $\chi'_t(E)$ is the derivative of $\chi_t(E)$ with respect to $t$.

\begin{Prop}
The pair $\sigma_t:=(\mathscr{A}_m, Z_t=\chi'_t+i\cdot \chi_t)$ defines a pre-stability condition for all $m\in \mathbb{Z}$, $t\in (m-1,m]$.
\end{Prop}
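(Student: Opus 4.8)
To prove that $\sigma_t = (\mathscr{A}_m, Z_t)$ is a pre-stability condition, I must verify two things: first, that $Z_t$ is a central charge on $\mathscr{A}_m$ (i.e.\ it sends every nonzero object into the upper half-plane, with the boundary-ray sign condition $\mathscr{R}e < 0$ when $\mathscr{I}m = 0$); and second, that every object satisfies the Harder--Narasimhan property. Since $\mathscr{A}_m$ is defined as the extension closure of the exceptional collection $\{\mathscr{O}_{\mathbb{P}^n}(-m-j)[j]\}_{j=0}^n$, my first step is to compute $Z_t$ on each generator and show the image lies in the closed upper half-plane with the correct boundary behavior; then additivity of $Z_t$ across short exact sequences in the abelian category $\mathscr{A}_m$ propagates the inequality to all objects.

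The key computation is that $\chi_t(E) = \int_{\mathbb{P}^n} ch(E)\cdot ch(\mathscr{O}(t))\cdot Td(\mathbb{P}^n)$ is, for a generator $\mathscr{O}_{\mathbb{P}^n}(-m-j)[j]$, equal to $(-1)^j P(t-m-j)$ where $P(x) = \chi(\mathscr{O}_{\mathbb{P}^n}(x)) = \binom{x+n}{n}$ is the Hilbert polynomial of $\mathscr{O}_{\mathbb{P}^n}$, whose roots are exactly $x = -1, -2, \dots, -n$. This is the crucial arithmetic input: for $t \in (m-1, m]$, I want to check that on the $j$-th generator the imaginary part $\chi_t$ and the real part $\chi'_t$ together land in the upper half-plane. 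Concretely, for $t\in(m-1,m]$ the sign pattern of $P(t-m-j)$ as $j$ ranges over $0,\dots,n$ alternates in just the right way (the argument $t-m-j$ moves through the interval between consecutive roots as $j$ increments), so that $(-1)^j\chi_t$ and the derivative term assemble so each generator has phase in $(0,1]$. I would make this precise by evaluating $P$ and $P'$ at the relevant points and reading off the signs, treating the degenerate cases (where $\chi_t = 0$, forcing me to check $\chi'_t < 0$) separately at the interval endpoints.

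For the Harder--Narasimhan property, the decisive structural fact is that $\mathscr{A}_m$ is a \emph{finite-length} (Noetherian and Artinian) abelian category: it is the extension closure of finitely many exceptional objects, equivalent to the category of representations of a finite-dimensional algebra (quiver with relations), so every object has finite length. In such a category the HN property follows automatically from the central charge axiom by a standard argument (cf.\ \cite{Bridgeland_2007}, Prop.\ 2.4, or \cite{Macr__2017}): finiteness rules out the infinite descending/ascending chains that would obstruct constructing the HN filtration, and the discreteness of the image lattice of $Z_t$ guarantees the slopes take finitely many values on subobjects of a fixed object. So I would reduce the HN verification to citing finite length plus the already-established central charge property.

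The main obstacle is the first step: cleanly establishing the sign and phase constraints on the generators across the half-open parameter interval $t\in(m-1,m]$, and in particular handling the boundary. The endpoint $t=m$ and the loci where an individual generator has vanishing $\chi_t$ are the delicate cases, since there I must verify the strict condition $\mathscr{R}e\,Z_t < 0$ rather than the open upper-half-plane condition, and I must confirm the phases never exit $(0,1]$ as $t$ varies. Everything else—additivity, finite length, and the resulting HN property—is formal once the generator computation is in hand; so I would concentrate the effort on a careful case analysis of $P(t-m-j)$ and $P'(t-m-j)$ for $j=0,\dots,n$.
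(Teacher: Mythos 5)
Your proposal matches the paper's proof in both structure and substance: the paper likewise reduces the positivity of $Z_t$ to the generators $\mathscr{O}_{\mathbb{P}^n}(-m-j)[j]$, analyzes the sign of the Hilbert polynomial $\chi_t(\mathscr{O}_{\mathbb{P}^n}(-m))$ between its simple roots $t=m-1,\dots,m-n$ (treating the boundary case $t=m$ by an alternating-sign computation of $\chi'_t$), and derives the Harder--Narasimhan property from the fact that $\mathscr{A}_m$ is Artinian. No substantive differences to report.
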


\begin{proof}

Firstly, the pair $(\mathscr{A}_m, Z_t)$ satisfies the Harder-Narasimhan property because $\mathscr{A}_m$ is Artinian, meaning that every descending chain in $\mathscr{A}_m$ terminates. We prove next that $Z_t$ maps every object in $\mathscr{A}_m$ to the upper half-plane. It follows from the observation that the polynomial $\chi_t(\mathscr{O}_{\mathbb{P}^n})$ has simple roots $t= -1,...,-n$. We show the details below.

It's sufficient to check that $Z_t$ maps the generators $\left\{ \mathscr{O}_{\mathbb{P}^n}(-m-n)[n], ..., \mathscr{O}_{\mathbb{P}^n}(-m)\right\}$ of $\mathscr{A}_m$ to the upper plane. More precisely, we will show that $\chi_t(\mathscr{O}_{\mathbb{P}^n}(-m-i)[i]) \geq 0$, and if $\chi_t(\mathscr{O}_{\mathbb{P}^n}(-m-i)[i])=0$, then $\chi'_t(\mathscr{O}_{\mathbb{P}^n}(-m-i)[i])<0$.

The polynomial $\chi_t(\mathscr{O}_{\mathbb{P}^n}(-m))=(t+n-m)(t+n-m-1)\cdots(t+1-m)/n!$ has simple roots $t=m-1, m-2, ..., m-n$. The sign of $\chi_t$ is given as follows,

\begin{enumerate}
    \item $\chi_t(\mathscr{O}_{\mathbb{P}^n}(-m))>0$ for

$
\begin{array}{lc}
t\in (m-n,m-n+1) ,...,(m-3,m-2), (m-1,\infty)& \text{if}\quad n=  \text{even}  \\
t\in (m-n+1,m-n+2) ,...,(m-3,m-2), (m-1,\infty) & \text{if}\quad n=  \text{odd}
\end{array}
$

\item $\chi_t(\mathscr{O}_{\mathbb{P}^n}(-m))<0$ for

$
\begin{array}{lc}
t\in (m-n+1,m-n+2) ,...,(m-4,m-3), (m-2,m-1)& \text{if}\quad n=  \text{even}  \\
t\in (m-n,m-n+1) ,...,(m-4,m-3), (m-2,m-1) & \text{if}\quad n=  \text{odd}
\end{array}
$
\end{enumerate}

\begin{enumerate}[$\bullet$]
    \item For $\mathscr{O}_{\mathbb{P}^n}(-m)$, we have $\chi_t(\mathscr{O}_{\mathbb{P}^n}(-m))>0$ when $t\in (m-1,m]$.

    \item For $\mathscr{O}_{\mathbb{P}^n}(-m-1)[1]$,

$\chi_t(\mathscr{O}_{\mathbb{P}^n}(-m-1)[1]=-\chi_t(\mathscr{O}_{\mathbb{P}^n}(-m-1))=-\chi_{t-1}(\mathscr{O}_{\mathbb{P}^n}(-m))>0$ when $t\in (m-1,m)$. $\chi_t(\mathscr{O}_{\mathbb{P}^n}(-m-1)[1]=0$ when $t=m$, and in this case we have $\chi'_m(\mathscr{O}_{\mathbb{P}^n}(-m-1)[1])=-\chi'_m(\mathscr{O}_{\mathbb{P}^n}(-m-1))<0$.

\item For $\mathscr{O}_{\mathbb{P}^n}(-m-i)[i]$, $i=2,3,...,n$, using the sign of $\chi_t$ above it's straightforward to check that

\[
\begin{array}{ll}
\chi_t(\mathscr{O}_{\mathbb{P}^n}(-m-i)[i])>0 & when t\in (m-1,m)\\
\chi_t(\mathscr{O}_{\mathbb{P}^n}(-m-i)[i])=0 & when t=m\\
\end{array}
\]

The sign of $\chi'_m(\mathscr{O}_{\mathbb{P}^n}(-m-i))$ is alternating for $i=1,2,...,n$. So the sign of $\chi'_m(\mathscr{O}_{\mathbb{P}^n}(-m-i))$ and $\chi'_m(\mathscr{O}_{\mathbb{P}^n}(-m))$ are different by $(-1)^i$. On the other hand, the shift by $[i]$ will change the sign of $\chi'_t(\mathscr{O}_{\mathbb{P}^n}(-m-i))$ by $(-1)^i$. So the sign of $\chi'_m(\mathscr{O}_{\mathbb{P}^n}(-m-i)[i])$ and $\chi'_m(\mathscr{O}_{\mathbb{P}^n}(-m))$ are different by $(-1)^i \cdot (-1)^i=1$. This imples that the signs of $\chi'_m(\mathscr{O}_{\mathbb{P}^n}(-m-i))[i]$ are the same for all $i=0,1,...,n$, and they are all negative.  

\end{enumerate}
\end{proof}

The proof above indicates that the stability conditions can be extended to the following continuous family.

\begin{Def}
Define $\mathscr{A}_t:=\mathscr{A}_{\lceil t\rceil}$ ($t\in \mathbb{R}$), where "$\lceil t \rceil$" is the roundup of $t$ to the closest integer.
\end{Def}

\begin{Cor}
The pair $\sigma_t=(\mathscr{A}_t, Z_t=\chi'_t+i\cdot\chi_t)$ defines a family of pre-stability conditions on $\mathbb{P}^n$ for $t\in \mathbb{R}$.
\end{Cor}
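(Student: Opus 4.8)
The plan is to deduce the Corollary directly from the Proposition by reorganizing the parameter $t$ according to the half-open intervals on which the heart is constant, so that the statement becomes a purely pointwise consequence. First I would record the combinatorics of the ceiling function: for every $t \in \mathbb{R}$ there is a unique integer $m$, namely $m = \lceil t \rceil$, with $t \in (m-1,m]$, and the intervals $(m-1,m]$ for $m \in \mathbb{Z}$ are pairwise disjoint and cover $\mathbb{R}$. By the very definition $\mathscr{A}_t = \mathscr{A}_{\lceil t\rceil}$, the heart attached to such a $t$ is exactly $\mathscr{A}_m$, so no new category is introduced beyond those already treated.

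With this identification in place I would argue as follows. Fix any $t \in \mathbb{R}$ and set $m = \lceil t\rceil$, so that $t \in (m-1,m]$ and $\mathscr{A}_t = \mathscr{A}_m$. The Proposition asserts precisely that $(\mathscr{A}_m, Z_t)$ is a pre-stability condition for this range of $t$; hence $(\mathscr{A}_t, Z_t) = (\mathscr{A}_m, Z_t)$ is a pre-stability condition. Since $t$ is arbitrary, this yields a pre-stability condition for every real $t$, which is the asserted family.

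The only point that genuinely requires care---and which the Proposition has already settled---is the behaviour at the integer values $t = m$, where the two competing conventions $\mathscr{A}_m$ and $\mathscr{A}_{m+1}$ meet. Because $\lceil m\rceil = m$, the definition selects $\mathscr{A}_m$ at $t = m$, matching the closed right endpoint of $(m-1,m]$; and it is exactly at $t=m$ that the imaginary part $\chi_t$ of the generators $\mathscr{O}_{\mathbb{P}^n}(-m-i)[i]$ vanishes, so that the stability-function condition rests on the strict negativity $\chi'_m < 0$ verified in the Proposition. Beyond this endpoint bookkeeping there is no substantive obstacle: the statement claims only that $\sigma_t$ is a pre-stability condition for each $t$ (no continuity, gluing, or support property across the switch $m \to m+1$ is asserted), so the extension to all of $\mathbb{R}$ is formal once the interval decomposition and the endpoint convention are matched to the hypotheses of the Proposition.
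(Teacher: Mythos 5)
Your proposal is correct and follows essentially the same route as the paper, which states the Corollary as an immediate consequence of the Proposition once $\mathscr{A}_t:=\mathscr{A}_{\lceil t\rceil}$ is defined: for each $t\in\mathbb{R}$ one has $t\in(m-1,m]$ with $m=\lceil t\rceil$, so the Proposition applies verbatim to the pair $(\mathscr{A}_m,Z_t)$. Your extra remark about the endpoint $t=m$, where $\chi_t$ of the shifted generators vanishes and the argument falls back on $\chi'_m<0$, is exactly the point the Proposition's proof already verifies, so nothing further is needed.
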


Next, we show that $\sigma_t=(\mathscr{A}_t, Z_t=\chi'_t+i\cdot\chi_t)$ satisfies the support property, and then $\sigma_t$ will be a stability condition. We will show in the next proposition the case of $\mathbb{P}^3$ when $t\in (0,1]$. The proof for all $\mathbb{P}^n$ ($n\geq 4$) and $t\in \mathbb{R}$ is analogous.

\begin{Prop}
The pre-stability condition $\sigma_t=(\mathscr{A}_t, Z_t=\chi'_t+i\cdot \chi_t)$ on $\mathbb{P}^3$ satisfies the support property for $t\in (0,1]$.
\end{Prop}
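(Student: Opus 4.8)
The plan is to exploit the fact that $\mathscr{A}_t$ is of finite length, which collapses the support property into a compactness statement on a finitely generated cone of classes. For $t\in(0,1]$ we have $\lceil t\rceil=1$, so $\mathscr{A}_t=\mathscr{A}_1=\langle \mathscr{O}_{\mathbb{P}^3}(-4)[3],\mathscr{O}_{\mathbb{P}^3}(-3)[2],\mathscr{O}_{\mathbb{P}^3}(-2)[1],\mathscr{O}_{\mathbb{P}^3}(-1)\rangle$. Write $S_i:=\mathscr{O}_{\mathbb{P}^3}(-1-i)[i]$ for $i=0,1,2,3$, so that $v(S_i)=(-1)^i[\mathscr{O}_{\mathbb{P}^3}(-1-i)]$. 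The first step is to record that every object of $\mathscr{A}_1$ is, as already noted after the definition of $\mathscr{A}_m$, a complex with dimension vector $[a_{-3},a_{-2},a_{-1},a_0]$, $a_{-i}\in\mathbb{Z}_{\geq 0}$; taking the alternating sum of its terms gives $v(E)=\sum_{i=0}^{3} a_{-i}\,v(S_i)$ with all $a_{-i}\geq 0$. Since the Chern characters of $\mathscr{O}_{\mathbb{P}^3}(-1-i)$ form a Vandermonde system in the distinct twists $-(1+i)$, the $v(S_i)$ are a basis of $K_{num}(\mathbb{P}^3)\otimes\mathbb{R}$; let $\mathcal{C}\subset\Lambda\otimes\mathbb{R}$ be the closed simplicial cone they span. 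Thus the classes of all objects, in particular of all semistable objects, lie in $\mathcal{C}$, and it suffices to bound $\|v\|\leq C\,|Z_t(v)|$ for $v\in\mathcal{C}$.

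The heart of the matter is to show that $Z_t$ does not vanish on $\mathcal{C}\setminus\{0\}$, and this I would extract directly from the sign analysis already carried out in the previous proof. Extending $\chi_t$ and $\chi'_t$ linearly to $\Lambda\otimes\mathbb{R}$, for a nonzero $v=\sum a_{-i}v(S_i)\in\mathcal{C}$ (now $a_{-i}\geq 0$ real) we have $\chi_t(v)=\sum a_{-i}\,\chi_t(S_i)\geq 0$. For $t\in(0,1)$ every generator satisfies $\chi_t(S_i)>0$, so $\chi_t(v)>0$ and hence $\operatorname{Im}Z_t(v)>0$. For $t=1$ only $\chi_1(S_0)>0$ while $\chi_1(S_i)=0$ for $i=1,2,3$; then $\chi_1(v)=0$ forces $a_0=0$, and $\chi'_1(v)=\sum_{i\geq 1}a_{-i}\,\chi'_1(S_i)<0$ because each $\chi'_1(S_i)<0$ and some $a_{-i}>0$, so $\operatorname{Re}Z_1(v)<0$. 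In every case $Z_t(v)\neq 0$.

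With nonvanishing established, the support property follows by compactness. Fix any norm $\|\cdot\|$ on $\Lambda\otimes\mathbb{R}$ (all are equivalent). The slice $\mathcal{K}:=\{v\in\mathcal{C}:\|v\|=1\}$ is closed and bounded, hence compact, and $v\mapsto|Z_t(v)|$ is continuous and strictly positive on $\mathcal{K}$ by the previous step; let $\delta>0$ be its minimum. For any nonzero $v\in\mathcal{C}$, homogeneity of $Z_t$ gives $|Z_t(v)|=\|v\|\,|Z_t(v/\|v\|)|\geq\delta\|v\|$, so the support property holds with $C=\delta^{-1}$, and in fact for all objects of $\mathscr{A}_1$, not merely the semistable ones.

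I expect the only delicate points to be bookkeeping rather than conceptual: confirming the explicit shape of the objects of $\mathscr{A}_1$ (equivalently, that the four shifted line bundles are the simple objects of this finite-length heart, via the Bott-vanishing computation $\Hom(S_i,S_j)=\delta_{ij}\mathbb{C}$), which is what forces every class into the cone $\mathcal{C}$, and noting that $\mathcal{C}$ is pointed so that $\mathcal{K}$ is compact. The reason this argument is so short, in contrast to the substantial support-property proofs for tilt stability, is that here the central charge has no kernel meeting the finitely generated effective cone; equivalently there are no semistable objects whose class maps to $0$, which is precisely the finite-length phenomenon. The identical template, fed the sign computations of the general $\mathbb{P}^n$ proposition, yields the support property for all $n$ and all $t$.
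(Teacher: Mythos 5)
Your argument is correct and is essentially the paper's own proof: both rest on the observation that every class of an object of $\mathscr{A}_1$ lies in the pointed cone generated by the four classes $v(\mathscr{O}_{\mathbb{P}^3}(-1-i)[i])$ and that $Z_t$ does not vanish on that cone minus the origin (your sign analysis of $\chi_t$ and $\chi'_t$ on the generators is exactly the paper's observation that the image cone is not the whole upper half-plane). The only difference is cosmetic: you extract the uniform constant by compactness of the unit slice of the cone, whereas the paper does so by an explicit projection onto a line $\Gamma_t$ separating the images $Z_t(u_i)$ from $0$.
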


\begin{proof}
By definition, we need to find numbers $C_t>0$, such that $\displaystyle \frac{|Z_t(E)|}{||v(E)||}>C_t$ for any $t\in (0,1]$. The category is $\mathscr{A}_1$ which is generated by these objects:

$$u_1:=\displaystyle \mathscr{O}_{\mathbb{P}^3}(-4)[3], u_2:=\mathscr{O}_{\mathbb{P}^3}(-3)[2], u_3:=\mathscr{O}_{\mathbb{P}^3}(-2)[1], u_4:=\mathscr{O}_{\mathbb{P}^3}(-1)$$

Observe that for any $t\in (0,1]$, the linear span $U:=\{u=a_1u_1+a_2u_2+a_3u_3+a_4u_4| a_i\geq 0\}$ is not the entire upper half plane. For instance, Figure \ref{support property} shows the case when $t=0.9$. So we can find a line $\Gamma_t$ (the dotted line in the figure) such that all the objects in $U$ have a non zero projection to $\Gamma_t$. For any object $\displaystyle u=\sum_{i=1}^4 a_iu_i$ ($a_i\geq 0$), denote its projection to $\Gamma_t$ by $p_t(u)$, and let $a_t:=min \{ |Z_t(p_t(u_i))| :i=1,2,3,4\}$, $b_t:=max \{ ||(u_i)|| :i=1,2,3,4\}$. 

$$ \frac{|Z_t(u)|}{||v(u)||}=\frac{|Z_t(\sum_{i=1}^4 a_iu_i)|}{||\sum_{i=1}^4 a_iu_i||}>\frac{|\sum_{i=1}^4a_ip_t(u_i)|}{\sum_{i=1}^4a_i||u_i||}\geq \frac{(\sum_{i=1}^4a_1)a_t}{(\sum_{i=1}^4a_1)b_t}=\frac{a_t}{b_t}=:C_t>0$$

\begin{figure}[ht]
\begin{tikzpicture}[scale=1.2]
\begin{axis}[ axis lines=middle
,xmin=-4,xmax=4,ymin=-4,ymax=4
]
\addplot [domain=0:3, thick]{0.5*x};
\addplot [domain=-3:0, thick]{-0.096*x};
\addplot[black, dashed] {-5*x};
\filldraw [black] (1.619,0.81) circle (2pt);
\node[color=black] at (1.619,1.2) {$u_1$};
\filldraw [black] (-0.229,0.031) circle (2pt);
\filldraw [black] (-0.161,0.018) circle (2pt);
\node[color=black] at (-0.229,0.5) {$u_2$};
\node[color=black] at (-0.461,-0.4) {$u_3$};
\filldraw[black] (-0.449,0.043);
\node[color=black] at (-0.749,0.4) {$u_4$};
\node[color=black] at (-1,3) {$\Gamma_t$};
\filldraw [black] (1,2) circle (2pt);
\node[color=black] at (1,2.5) {$u$};
\filldraw [black] (-0.3,1.5) circle (2pt);
\node[color=black] at (-1,1.6) {$p_t(u)$};
\end{axis}
\end{tikzpicture}
\caption{}
    \label{support property}
\end{figure}

\end{proof}

\subsection{The Euler Stability Condition on $\mathbb{P}^3$ from tiltings}

In this subsection, we will show that the Euler stability is indeed related to the stability $\sigma_{\alpha,\beta,s}=(\mathscr{A}^{\alpha, \beta}, Z_{\alpha,\beta,s})$ in the way that $\mathscr{A}_t$ is an additional tilt of $\mathscr{A}^{\alpha, \beta}$. We start with reviewing the three tilts from $Coh(\mathbb{P}^3)$, where all central charges are defined by derivatives of $\chi_t$. And then we show that the heart $\mathscr{A}_t$ in Euler stability coincides with a one-dimensional slide of the heart by a triple tilts.

\begin{enumerate}
    \item The first tilt.

The first tilt is made with respect to the first Todd class of $\mathbb{P}^3$ ($td_1(\mathbb{P}^3)=2H$). Define the central charge on $Coh(\mathbb{P}^3)$ as $Z_{1,t}=-\chi''_t + i \cdot \chi'''_t$, and the slope function is

$$\mu_t:=\frac{\chi''_t}{\chi'''_t}=\frac{ch^{-t-2}_1}{ch_0}=\frac{ch_1+(t+2)ch_0}{ch_0}$$

This defines the following torsion pair of $Coh(\mathbb{P}^3)$

$\mathscr{T}:=\left\{E\in Coh(\mathbb{P}^3): \forall E\twoheadrightarrow Q, \mu_t(Q)>0 \right\}$

$\mathscr{F}:=\left\{E\in Coh(\mathbb{P}^3): \forall F\hookrightarrow E, \mu_t(F)\leq 0 \right\}$

and we obtain the tilted heart as $Coh^{-t-2}(\mathbb{P}^3):=\left< \mathscr{F}[1], \mathscr{T} \right>$.

\item The second tilt.

We define the weak stability function $Z_{2,t}:=-\chi'_t+i\cdot\chi''_t$ on $Coh^{-t-2}(\mathbb{P}^3)$ and its slope is given by: 

$$\nu_t:=\frac{\chi'_t}{\chi''_t}=\frac{ch^{-t-2}_2-\frac{1}{6}ch_0}{ch^{-t-2}_1}$$

We have the torsion pair of the category $Coh^{-t-2}(\mathbb{P}^3)$ and the new heart of a bounded t-structure as follows

$\mathscr{T}_{\beta}:=\left\{E\in Coh^{-t-2}(\mathbb{P}^3): \forall E\twoheadrightarrow Q, \nu_t(Q)>0 \right\}$

$\mathscr{F}_{\beta}:=\left\{E\in Coh^{-t-2}(\mathbb{P}^3): \forall F\hookrightarrow E, \nu_t(F)\leq 0 \right\}$

$\mathscr{B}_t:=\left< \mathscr{F}_{\beta}[1], \mathscr{T}_{\beta} \right>$. 

The central charge for $\mathscr{B}_t$ is defined as 

$$Z_{3,t}=-\chi_t+i\cdot \chi'_t=-ch^{-t-2}_3+\frac{1}{6}ch^{-t-2}_1+i\cdot (ch^{-t-2}_2-\frac{1}{6}ch^{-t-2}_0)$$

Comparing $Z_{3,t}$ to $Z_{\alpha, \beta, s}$ in the previous section, we have $\alpha=\frac{1}{\sqrt{3}}$, $\beta=-t-2$, and $s=\frac{1}{3}$. So the pair $(\mathscr{B}_t, Z_t)$ is a one-dimensional stability condition and the support property is given by the following quadratic form: 

\[
\begin{array}{rl}
Q_t(E)= & ((ch_1(E))^2-2ch_0(E)ch_12(E))(\frac{1}{3}+(t+2)^2)+\\
& (6ch_0(E)ch_3(E)-2ch_1(E)ch_2(E))(-t-2)\\
& -6ch_1(E)ch_3(E)+4(ch_2(E))^2
\end{array}
\]

\item {The third tilt.}

We now make a tilt of the category $\mathscr{B}_t$. The slope on $\mathscr{B}_t$ is defined as $\lambda_t=\frac{\chi_t}{\chi'_t}$. Similarly, we have the following torsion pair: 

$$\mathscr{T}':=\left\{E\in \mathscr{B}_t: \forall E\twoheadrightarrow Q, \lambda_t(Q)>0 \right\}$$  

$$\mathscr{F}':=\left\{E\in \mathscr{B}_t: \forall F\hookrightarrow E, \lambda_t(F)\leq 0 \right\}$$

Define $\mathscr{A}'_t:=\left<\mathscr{F}'[1],\mathscr{T}'\right>$ to be the new heart of bounded t-structure, and $Z_t := \chi'_t+i\cdot \chi_t$. 
\end{enumerate}

The next proposition shows that $\mathscr{A}'_t$ coincides with $\mathscr{A}_t$, so the Euler stability $(\mathscr{A}_t, Z_t)=(\mathscr{A}'_t, Z_t)$ is a stability condition by tilting $Coh(\mathbb{P}^3)$ three times.

\begin{Prop}{\label{Euler heart from tilts}}
The category $\mathscr{A}'_t$ is the extension closure of the following objects: 

$$\left\{ \mathscr{O}_{\mathbb{P}^3}(-n-3)[3], \mathscr{O}_{\mathbb{P}^3}(-n-2)[2], \mathscr{O}_{\mathbb{P}^3}(-n-1)[1], \mathscr{O}_{\mathbb{P}^3}(-n) \right\}$$ 

where $n:=\lceil t \rceil \in \mathbb{Z}$.
\end{Prop}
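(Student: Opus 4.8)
The plan is to prove the two hearts coincide by establishing the single containment $\mathscr{A}_t \subseteq \mathscr{A}'_t$ and then invoking the standard fact that if $\mathcal{A}\subseteq\mathcal{A}'$ are two hearts of bounded t-structures on the same triangulated category, then $\mathcal{A}=\mathcal{A}'$. Since $\mathscr{A}_t=\mathscr{A}_{\lceil t\rceil}$ is by definition the extension closure of the four objects $\mathscr{O}_{\mathbb{P}^3}(-n-i)[i]$, $i=0,1,2,3$ (with $n=\lceil t\rceil$), and $\mathscr{A}'_t$ is abelian and hence extension-closed, it suffices to show that each generator $\mathscr{O}_{\mathbb{P}^3}(-n-i)[i]$ already lies in $\mathscr{A}'_t$. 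I would do this by tracking each line bundle through the three successive tilts, recording at every stage whether it falls into the torsion or the torsion-free part.

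The computational engine is the closed form: for $d\in\mathbb{Z}$ and $s:=t+d$ one has $\chi_t(\mathscr{O}_{\mathbb{P}^3}(d))=(s+1)(s+2)(s+3)/6$, $\chi'_t=(3s^2+12s+11)/6$, $\chi''_t=s+2$, and $\chi'''_t=1$, while the three slope functions are the ratios $\mu_t=\chi''_t/\chi'''_t$, $\nu_t=\chi'_t/\chi''_t$, $\lambda_t=\chi_t/\chi'_t$. Because every line bundle satisfies $\overline{\Delta}=0$, it and each of its shifts is tilt-(semi)stable throughout the construction, so its membership in each torsion pair is governed purely by the sign of the relevant slope — equivalently by the signs of $\chi_t,\chi'_t,\chi''_t$ evaluated on the object at that stage. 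First I would check that for $t\in(n-1,n]$ the first tilt (sorting by $\mu_t=s+2$) places $\mathscr{O}_{\mathbb{P}^3}(-n)$ and $\mathscr{O}_{\mathbb{P}^3}(-n-1)$ in $\mathscr{T}$ and $\mathscr{O}_{\mathbb{P}^3}(-n-2)$, $\mathscr{O}_{\mathbb{P}^3}(-n-3)$ in $\mathscr{F}$ (so the latter two acquire a shift $[1]$). Then I would carry each object through the $\nu_t$- and $\lambda_t$-tilts, reading off the signs of $\chi'_t$ and $\chi_t$ from the formulas above, and verify that the total shift accumulated by $\mathscr{O}_{\mathbb{P}^3}(-n-i)$ is exactly $[i]$, so that it lands as the claimed generator.

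I expect the main obstacle — and the place a careless argument would break — to be that the intermediate torsion/torsion-free memberships are \emph{not} uniform in $t$ on $(n-1,n]$. For example $\mathscr{O}_{\mathbb{P}^3}(-n-1)$ crosses $\chi'_t=0$ at the root $s=-2+\tfrac{\sqrt{3}}{3}$: for $t$ near $n$ it sits in $\mathscr{T}_{\beta}$ (no shift at the second tilt, then a shift at the third), while for smaller $t$ it sits in $\mathscr{F}_{\beta}$ (a shift at the second tilt, none at the third). In both regimes the accumulated shift is $[1]$, but confirming this forces one to split $(n-1,n]$ into the subintervals cut out by the real roots of $\chi'_t$ and of $\chi_t$ and to check each case separately; the analogous $t$-dependent split recurs for $\mathscr{O}_{\mathbb{P}^3}(-n-2)$, whereas $i=0$ and $i=3$ are uniform. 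The payoff is that the \emph{net} shift is constant even though the path through the tilts is not, which is exactly why the variable double-tilt hearts $\mathscr{B}_t$ produce the single constant heart $\mathscr{A}_n$.

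A secondary point I would pin down carefully is the tilt-(semi)stability of the shifted line bundles inside the iterated hearts, so that membership really is dictated by slope sign; I would justify this from the $\overline{\Delta}\geq 0$ Bogomolov inequality (stated above) together with the fact that objects with $\overline{\Delta}=0$ are extremal, hence tilt-stable, and that a shift of a tilt-stable object remains (semi)stable in the tilted category. Once all four generators $\mathscr{O}_{\mathbb{P}^3}(-n-i)[i]$ are shown to lie in $\mathscr{A}'_t$ uniformly for $t\in(n-1,n]$, extension-closedness gives $\mathscr{A}_t\subseteq\mathscr{A}'_t$, and the heart-containment lemma upgrades this to the desired equality $\mathscr{A}'_t=\mathscr{A}_t$.
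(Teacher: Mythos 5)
Your proposal is correct and follows essentially the same route as the paper: reduce to showing the four generators $\mathscr{O}_{\mathbb{P}^3}(-n-i)[i]$ lie in $\mathscr{A}'_t$, invoke the heart-containment lemma, and track each line bundle through the three tilts via the signs of $\chi_t,\chi'_t,\chi''_t$, splitting $(n-1,n]$ at the roots (e.g.\ $t=n-1+1/\sqrt{3}$) exactly as the paper does. The only cosmetic difference is that you derive tilt-stability of the shifted line bundles from $\overline{\Delta}=0$ extremality, whereas the paper simply cites the known stability results of Macr\`i and Schmidt.
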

\begin{proof}
$\mathscr{A}'$ is a heart of a bounded t-structure since it is obtained from tilting $Coh(\mathbb{P}^3)$ (\cite{Happel_1996}). The category $\mathscr{A}_t$ generated by the objects

$$\left\{ \mathscr{O}_{\mathbb{P}^3}(-n-3)[3], \mathscr{O}_{\mathbb{P}^3}(-n-2)[2], \mathscr{O}_{\mathbb{P}^3}(-n-1)[1], \mathscr{O}_{\mathbb{P}^3}(-n)\right\}$$

is also the heart of a bounded t-structure (\cite{MR2335991} Lemma $3.14$). By \cite{Macr__2017} Prop $5.6$, two hearts must coincide if one is contained in another. So it's sufficient to prove that the objects $\left\{ \mathscr{O}_{\mathbb{P}^3}(-n-3)[3], \mathscr{O}_{\mathbb{P}^3}(-n-2)[2], \mathscr{O}_{\mathbb{P}^3}(-n-1)[1], \mathscr{O}_{\mathbb{P}^3}(-n)\right\}$ generating $\mathscr{A}_t$ are all contained in $\mathscr{A}'_t$.

We will prove that the objects $\left\{ \mathscr{O}_{\mathbb{P}^3}(-3)[3], \mathscr{O}_{\mathbb{P}^3}(-2)[2], \mathscr{O}_{\mathbb{P}^3}(-1)[1], \mathscr{O}_{\mathbb{P}^3}\right\}$ are in the category $\mathscr{A}'_t$ where $t\in (-1,0]$, and the proof for a general $t\in \mathbb{R}$ is analogous. We will keep track of the line bundles $\mathscr{O}_{\mathbb{P}^3}, \mathscr{O}_{\mathbb{P}^3}(-1), \mathscr{O}_{\mathbb{P}^3}(-2),\mathscr{O}_{\mathbb{P}^3}(-3)$ in $Coh(\mathbb{P}^{3})$ while we make the three time tiltings.

The line bundles $\mathscr{O}_{\mathbb{P}^3}(m)$ are Mumford stable for any $m\in \mathbb{Z}$ , so they are also twisted Mumford stable.
The fact that $\mu_t(\mathscr{O}_{\mathbb{P}^3})>0$
$\mu_t(\mathscr{O}_{\mathbb{P}^3}(-1))>0$
$\mu_t(\mathscr{O}_{\mathbb{P}^3}(-2))<0$
$\mu_t(\mathscr{O}_{\mathbb{P}^3}(-3))<0$ for $t\in (-1,0]$ implies
that the category $Coh^{-t-2}(\mathbb{P}^3)$ contains these objects 

$$\mathscr{O}_{\mathbb{P}^3}, \mathscr{O}_{\mathbb{P}^3}(-1), \mathscr{O}_{\mathbb{P}^3}(-2)[1], \mathscr{O}_{\mathbb{P}^3}(-3)[1]$$
when $t\in (-1,0]$.

Then we use the slope function $\nu_t=\frac{ch^{-t-2}_2-1/6ch_0}{ch^{-t-2}_1}$ for the second tilt. It was proved in \cite{Macr__2014} and \cite{MR3597844} that line bundles and their shift $\mathscr{O}_{\mathbb{P}^3}(m)$, 
$\mathscr{O}_{\mathbb{P}^3}(m)[1]$ are tilt stable. A straightforward computation shows that $\mathscr{B}_t$ (a tilt of $Coh^{-t-2}(\mathbb{P}^3)$), when $t\in (-1,0]$, contains the following objects:

\begin{equation*}
\mathscr{O}_{\mathbb{P}^3}, \enskip
\left\{ 
\begin{array}{ll}
    \mathscr{O}_{\mathbb{P}^3}(-1) &  t\in (-1+1/\sqrt{3},0] \\
    \mathscr{O}_{\mathbb{P}^3}(-1)[1] &  t\in (-1,-1+1/\sqrt{3}]
\end{array}
\right., \enskip
\left\{ 
\begin{array}{ll}
    \mathscr{O}_{\mathbb{P}^3}(-2)[1] &  t\in (-1/\sqrt{3},0] \\
    \mathscr{O}_{\mathbb{P}^3}(-2)[2] &  t\in (-1,-1/\sqrt{3}]
\end{array}
\right., \enskip
\mathscr{O}_{\mathbb{P}^3}(-3)[2]
\end{equation*}

For the last tilt, we have that line bundles and their shifts are stable (\cite{Macr__2014}), i.e. $\mathscr{O}_{\mathbb{P}^3}(m)$, $\mathscr{O}_{\mathbb{P}^3}(m)[1]$, $\mathscr{O}_{\mathbb{P}^3}(m)[2]$ are stable in the double tilt $\mathscr{B}_t$.

Using the slope function $\displaystyle \lambda_t=\frac{\chi_t}{\chi'_t}=\frac{ch^{-t-2}_3-1/6ch^{-t-2}_1}{ch^{-t-2}_2-1/6ch_0}$, the claim follows from a direct computation of those $\lambda_t(\mathscr{O}_{\mathbb{P}^3}(-i)[j])$'s in the last step. So as expected, the objects
$\mathscr{O}_{\mathbb{P}^3}, \mathscr{O}_{\mathbb{P}^3}(-1)[1], \mathscr{O}_{\mathbb{P}^3}(-2)[2], \mathscr{O}_{\mathbb{P}^3}(-3)[3]$ are in the category $\mathscr{A}'_t$. 
\end{proof}

\section{The Gieseker chamber for Euler stabilty}

In this section, we work with a fixed one-dimensional class $v=(0,0,m=ch_2>0,ch_3)\in K_{num}{\mathbb{P}^3}$, and show part of the result that there exists a Gieseker chamber for the Euler stability condition $\sigma_t=(\mathscr{A}_t, Z_t=\chi'_t+i\cdot\chi_t)$ on $\mathbb{P}^3$. We expect that the Gieseker chamber shows up for $t>>0$, and for $t<<0$ stable objects are shifted Gieseker stable sheaves $F[1]$ by duality results (in section 5). We modify the stability condition on $\mathscr{B}_t$ as $\sigma_{t,u}=(\mathscr{B}_t, Z_{t,u}=-\chi_t+\frac{u^2}{2}\chi''_t+i\cdot\chi'_t)$ and work mostly in the $(t,u)$ plane. $\sigma_{t,u}$ is indeed a stability condition from \cite{Bayer_2013},\cite{Macr__2014}. We start with describing the numerical walls, and then prove that the Gieseker chamber shows up for $u>>0$. Finally, we work on the global walls for the class $v$. We also use the Euler characteristic $\chi=2ch_2+ch_3$ (from Riemann-Roch) of the class $v$ if it's more convenient.

\subsection{Descriptions of numerical walls.}

Let $E\in \mathscr{B}_t$ be an object whose class is $v$, and its Hilbert polynomial is $P_E(t) = mt + \chi$. Suppose a potential wall is defined by the short exact sequence $0\to A\to E \to B \to 0$ in $\mathscr{B}_t$, then a direct computation shows that its numerical wall falls into the following three possibilities in the $(t,u)$ plane:

\begin{enumerate}
\item[Type 1] Shown as the red walls in Figure \ref{Numerical walls type 1,2}. There is a bounded elliptic part of the wall and a "vertical" part whose asymptote is defined by $t=-\frac{ch_1(A)}{ch_0(A)}-2$. The green point in Figure \ref{Numerical walls type 1,2} has coordinates $C:=(-\frac{\chi}{m},0)$. It is the center of the "elliptic part", which means that all the "elliptic parts" of the walls of Type 1 form a nested family with the center $C$. 
\item[Type 2] Shown as the blue wall in Figure \ref{Numerical walls type 1,2}. This is when $ch_0(A)=0$ and $ch_1(A)\neq 0$. (An object $A\in \mathscr{B}_t$ with $ch_0(A)=ch_1(A)=0$ doesn't define a wall.) The wall is a semi-circle with center $C$ (same $C$ as in Type 1). All the semi-circles form a nested family with the same center $C$.
\item[Type 3] The mirror image of Type 1, as shown in Figure \ref{Numerical walls type 3}. The asymptote of the vertical part is also defined by $t=-\frac{ch_1(A)}{ch_0(A)}-2$ and the center of the elliptic part is $C$ as well.
\end{enumerate}

\begin{figure}[ht]
     \centering
\begin{tabular}{lr}
\begin{minipage}{.4\textwidth}
\begin{tikzpicture}[scale=0.85]
\begin{axis}[axis lines=middle ,xmin=-3,xmax=1,ymin=-1.5,ymax=2]
\draw[line width=1pt, color=red] (-2.131,2) to[out=260,in=90] (-2.533,0) to[out=270,in=100] (-2.131,-2);
\draw[line width=1pt, color=red] (-1.324,0) to[out=90,in=180] (-0.551,0.795) to[out=0,in=90] (0.349,0);
\draw[line width=1pt, color=red] (-1.324,0) to[out=270,in=180] (-0.551,-0.795) to[out=0,in=270] (0.349,0);
\draw[line width=1pt, color=blue] (-1/3,0)circle(0.8);
\draw[line width=1pt, dashed] (-2.05,2)--(-2.05,-2);
\filldraw[green] (-0.334,0) circle (2pt);
\end{axis}
\end{tikzpicture}
\caption{Type 1 and 2}
\label{Numerical walls type 1,2}

\end{minipage}
\begin{minipage}{.4\textwidth}
\begin{tikzpicture}[scale=0.8]
\begin{axis}[axis lines=middle ,xmin=-2,xmax=2,ymin=-1.5,ymax=2]
\draw[line width=1pt, color=red] (-1.1,0) to[out=90,in=180] (-0.048,0.91) to[out=0,in=90] (0.85,0);
\draw[line width=1pt, color=red] (-1.1,0) to[out=270,in=180] (-0.048,-0.91) to[out=0,in=270] (0.85,0);
\draw[line width=1pt, color=red] (1.5,2) to[out=280,in=90] (1.85,0) to[out=270,in=260] (1.5,-2);
\draw[line width=1pt, dashed] (1.4,2)--(1.4,-2);
\filldraw[green] (-0.334,0) circle (2pt);
\end{axis}
\end{tikzpicture}
\caption{Type 3}
\label{Numerical walls type 3}
\end{minipage}
\end{tabular}
     \caption{Numerical Walls}
     \label{numerical walls}
 \end{figure}

For Type 1 and 3, the "elliptic part" might not show up, but the vertical part always exists.

\subsection{Asymptotic results for sheaves and complexes.}

In this subsection, we show that for a fixed one-dimensional class $v\in K_{num}(\mathbb{P}^3)$, the Gieseker chamber appears in the $(t-u)-$plane when $u>>0$. We start with the asymptotic behavior of sheaves for $u>>0$, and prove that a sheaf $E$ with class $v$ is Gieseker stable if and only if it is $\sigma_{t,u}$ stable when $u>>0$.

\begin{Lemma}\label{Boundedness for sheaves}
For a fixed $\displaystyle t\in \mathbb{Q}$ and a Gieseker stable sheaf $E\in \mathscr{B}_t$ with class $v$, there exist $B_1\in \mathbb{R}$ and $B_2\in \mathbb{R}^+$ such that for all $A\hookrightarrow E\in \mathscr{B}_t$, we have  $\displaystyle \frac{\chi_t(A)}{\chi'_t(A)}\leq B_1$, and $\displaystyle \frac{\chi''_t(A)}{\chi'_t(A)}\geq B_2>0$ or $\chi''_t(A)=0$. 
\end{Lemma}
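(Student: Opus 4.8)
The plan is to pin down where a subobject $A$ of $E$ must live inside the two successive tilts, and then to read the three quantities $\chi_t(A)$, $\chi'_t(A)$, $\chi''_t(A)$ off the positivity built into each tilt, using the genuine stability condition $\sigma^{\mathscr{B}_t}_t=(\mathscr{B}_t, Z^{\mathscr{B}_t}_t=-\chi_t+i\chi'_t)$ for the first estimate and an integrality argument for the second.

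First I would record the numerical type of $E$. Since $E$ is a one-dimensional sheaf, $ch_0(E)=ch_1(E)=0$, whence $\chi''_t(E)=ch^{-t-2}_1(E)=0$ and $\chi'_t(E)=ch^{-t-2}_2(E)-\tfrac16 ch_0(E)=ch_2(E)=m>0$; thus $\nu_t(E)=+\infty$ and $E\in\mathscr{T}_{\beta}\subseteq Coh^{-t-2}(\mathbb{P}^3)$. Given any monomorphism $A\hookrightarrow E$ in $\mathscr{B}_t$ with cokernel $B$, I would pass to the long exact sequence of the cohomology functors $\mathcal{H}^{\bullet}$ of the t-structure with heart $Coh^{-t-2}(\mathbb{P}^3)$. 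Because $\mathcal{H}^{-1}(E)=0$, injectivity forces $\mathcal{H}^{-1}(A)=0$, so $A\in\mathscr{T}_{\beta}\subseteq Coh^{-t-2}(\mathbb{P}^3)$ as well. Membership in the first tilt gives $\chi''_t(A)=ch^{-t-2}_1(A)\ge 0$, and additivity of $\chi'_t$ together with $\chi'_t=\operatorname{Im}Z^{\mathscr{B}_t}_t\ge 0$ on $\mathscr{B}_t$ gives $0\le\chi'_t(A)=\chi'_t(E)-\chi'_t(B)\le\chi'_t(E)=m$.

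The second inequality is then purely an integrality statement, which is exactly where the hypothesis $t\in\mathbb{Q}$ is used. Writing $t=p/q$, the quantity $\chi''_t(A)=ch_1(A)+(t+2)ch_0(A)$ lies in the discrete group $\tfrac1q\mathbb{Z}$; hence either $\chi''_t(A)=0$, which is the stated alternative, or $\chi''_t(A)\ge \tfrac1q$. In the latter case, combined with $\chi'_t(A)\le m$ (and with the convention that the ratio is $+\infty$ when $\chi'_t(A)=0$), I obtain $\tfrac{\chi''_t(A)}{\chi'_t(A)}\ge\tfrac{1}{qm}=:B_2>0$.

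For the first inequality I would use that $\sigma^{\mathscr{B}_t}_t$ is an honest Bridgeland stability condition whose slope is precisely $\lambda_t=\chi_t/\chi'_t$. By the Harder--Narasimhan property, every subobject satisfies $\lambda_t(A)\le\mu^{\sigma^{\mathscr{B}_t}_t}_{\max}(E)=:B_1$, which is the desired bound $\chi_t(A)/\chi'_t(A)\le B_1$. The one place Gieseker stability is genuinely needed---and the step I expect to be the main obstacle---is the finiteness of $B_1$, i.e. that $E$ admits no subobject of phase $1$ (such a subobject has $\chi'_t=0$ and would send the ratio to $+\infty$). To rule this out I would note that for $A\in\mathscr{T}_{\beta}$ every $\nu_t$--Harder--Narasimhan factor has $\chi'_t\ge 0$, with $\chi'_t=0$ only for the zero-dimensional (null) factors of $Z_{2,t}$; since $\chi'_t(A)=0$ is a sum of these nonnegative contributions, every factor, and hence $A$ itself, has $\chi''_t(A)=0$ and class $(0,0,0,ch_3(A))$ with $ch_3(A)\ge 0$, i.e. $A$ is a zero-dimensional sheaf. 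Purity of the Gieseker-stable sheaf $E$ then excludes such a subobject (there is no nonzero map from a zero-dimensional sheaf into $E$, and the same computation through the tilts shows no such mono exists in $\mathscr{B}_t$), so $\mu^{\sigma^{\mathscr{B}_t}_t}_{\max}(E)<\infty$ and $B_1$ is finite.
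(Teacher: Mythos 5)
Your proof is correct, and much of its skeleton coincides with the paper's: the reduction $A\in\mathscr{T}_{\beta}\subset Coh^{-t-2}(\mathbb{P}^3)$ via the long exact sequence of $Coh^{-t-2}$-cohomologies, the bound $0\le \chi'_t(A)\le m$, the integrality argument giving $B_2=\tfrac{1}{qm}$, and the exclusion of subobjects with $\chi'_t=0$ by showing they would be zero-dimensional sheaves violating the purity of the Gieseker stable $E$ (the paper asserts, with the same level of detail as you, that an object of $\mathscr{T}_\beta$ killed by $Z_{2,t}$ is supported on points). Where you genuinely diverge is the upper bound $B_1$. The paper bounds the numerator $\chi_t(A)$ directly: it truncates the Harder--Narasimhan filtration of $A$ with respect to $Z_t=-\chi_t+i\chi'_t$ at the last factor of positive slope to get $\chi_t(A)\le\chi_t(A_i)$, shows $A_i\to E$ factors through the analogous truncation $E_j$ of $E$, and then passes to the third tilt $\mathscr{A}_t$, where $E_j$ admits only finitely many subobjects (finitely many dimension vectors in the quiver heart), so $\chi_t(A_i)$ is bounded; this is combined with the separate lower bound $\chi'_t(A)\ge\tfrac{1}{6b}$. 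You instead bound the ratio in one stroke by the standard seesaw estimate $\lambda_t(A)\le\lambda_{t,\max}(E)$ for subobjects in the abelian category $\mathscr{B}_t$, which is legitimate here because $\sigma^{\mathscr{B}_t}_t$ is an honest stability condition with the HN property, and the finiteness of $\lambda_{t,\max}(E)$ is exactly the no-phase-one-subobject statement you establish (note that only the \emph{first} HN factor of a subobject can have $\chi'_t=0$, and it is itself a subobject of $E$, so your exclusion suffices). Your route is shorter, stays entirely inside $\mathscr{B}_t$, and sidesteps the delicate point in the paper's argument of whether $A_i\hookrightarrow E_j$ remains a monomorphism after tilting to $\mathscr{A}_t$; the paper's route, by contrast, is the one that generalizes to the remark following the lemma about arbitrary coherent sheaves, where $\lambda_{t,\max}$ need not be finite.
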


\begin{proof}
Assume $t=a/b$ where $a\in \mathbb{Z}$, and $b\in \mathbb{Z}^+$.

Let $B\in \mathscr{B}_t$ be the quotient of $A\hookrightarrow E$, i.e. $0\to A\to E\to B\to 0$. The corresponding long exact sequence in $Coh^{-t-2}(\mathbb{P}^3)$  is 
$$0\to \mathscr{H}^{-1}_{\beta}(B) \to A \to E \to \mathscr{H}^{0}_{\beta}(B)\to 0$$
where $\mathscr{H}^{-1}_{\beta}$ and $\mathscr{H}^{0}_{\beta}$ denote the cohomologies in $Coh^{-t-2}(\mathbb{P}^3)$.

This implies that $A\in Coh^{-t-2}\subset \mathscr{B}_t$. So we have $\chi''_t(A)\geq0$, and $\chi'_t(A)\geq0$. 
Moreover, we have $\chi'_t(B)\geq 0$ and $\chi'_t(A)+\chi'_t(B)=\chi'_t(E)=m$. So $\chi'_t(A)\in [0,m]$, and if $\chi'_t(A)=0$ then $\chi''_t(A)=0$ otherwise the tilt slope $\displaystyle \nu(A)=\frac{\chi'_t(A)}{\chi''_t(A)}=0$ will make $A$ shifted in $\mathscr{B}_t$ (i.e. $A[1]\in\mathscr{B}_t$). In this case, $A$ is a sheaf supported on points and this violates the fact that $E$ is Gieseker stable. So $\chi'_t(A)>0$. 

\begin{enumerate}
\item Lower bound for $\displaystyle \frac{\chi''_t(A)}{\chi'_t(A)}$.

$\displaystyle \frac{\chi''_t(A)}{\chi'_t(A)}=\frac{ch_1+(t+2)ch_0}{\chi'_t(A)}\geq \frac{ch_1+(t+2)ch_0}{m}\geq \frac{1/b}{m}=\frac{1}{bm}$ or $\displaystyle \frac{\chi''_t(A)}{\chi'_t(A)}=0$. We choose $\displaystyle B_2=\frac{1}{bm}$. The last inequality holds because $ch_0,ch_1\in \mathbb{Z}$, and $ch_1+(t+2)ch_0>0$.

\item Upper bound for  $\displaystyle \lambda_t(A)= \frac{\chi_t(A)}{\chi'_t(A)}$.

From the fact that $ch_0,ch_1\in \mathbb{Z}$, $ch_2\in \frac{1}{2}\mathbb{Z}$, and $\chi'_t(A)>0$, we have $\displaystyle \chi'_t(A)=ch_2(A)+(t+2)ch_1(A)+\frac{(t+2)^2}{2}ch_0(A)-\frac{1}{6}ch_0(A)\geq\frac{1}{6b}>0$.

So we just need an upper bound for $\chi_t(A)$. Consider the Harder-Narasimhan filtration of $A\in \mathscr{B}_t$ with respect to the central charge $Z_t=-\chi_t+i\cdot\chi'_t$: 

$$0=A_0\subset A_1\subset ...\subset A_i\subset A_{i+1}\subset ... \subset A$$

Let $i\in \mathbb{Z}$ be the number such that the semistable factor $\displaystyle \frac{A_i}{A_{i-1}}$ is the last one whose slope is positive, i.e.  $\displaystyle \lambda_t(\frac{A_i}{A_{i-1}})>0$, and $\displaystyle\lambda_t( \frac{A_{i+1}}{A_{i}})\leq 0$. If there is no such $i$, then $\chi'_t(E)\leq 0$ and we define $B_1$ to be $0$.

Similarly for the Harder-Narasimhan filtration for $E\in \mathscr{B}_t$: 

$$0=E_0\subset E_1\subset ...\subset E_j\subset E_{j+1}\subset ... \subset E$$

let $j$ be the index such that the Harder-Narasimhan factor is the last one with a positive slope, i.e. 
$\displaystyle\lambda_{t}(\frac{E_j}{E_{j-1}})>0$ and $\displaystyle\lambda_{t}(\frac{E_{j+1}}{E_{j}})\leq0$.

In the short exact sequence $0\to A_i \to A \to \frac{A}{A_i}\to 0$ we have $\chi_t(A_i)>0$ and $\displaystyle \chi_t(\frac{A}{A_i})\leq0$ from how we choose $i$. This implies $\chi_t(A)\leq\chi_t(A_i)$, and it is sufficient to find an upper bound for $\chi_t(A_i)$.

Consider $0\to A_i\xrightarrow{f} E$ and the diagram
\begin{center}
\begin{tikzcd}
{}&{}&E_j\arrow[hook]{d}\\
0\arrow{r}&A_i \arrow{dr}{\phi}\arrow{ur}{f_1}\arrow{r}{f}&E\arrow[two heads]{d}\\
{}&{}&E/E_j\\
\end{tikzcd}
\end{center}

$\phi$ is a zero map because $A_i$ is extended by semistable objects with $\lambda_t>0$ and $E/E_j$ is extended by objects with $\lambda_t\leq 0$. So the morphism $f$ lifts to a morphism $f_1$ from $A_i$ to $E_j$ where they are both extended by semistable objects in $\mathscr{B}_t$ with positive slope $\lambda_t$. 

Now we make another tilt from $\mathscr{B}_t$ to $\mathscr{A}_t$ and the morphism $0\to A_i \to E_j$ stays the same in $\mathscr{A}_t$ because they are both generated by objects with positive slopes $\lambda_t$. For a fixed $E$ and $t$, the subobject $E_j$ and its dimension vector are fixed in $\mathscr{A}_t$. There are only finitely many choices of subobjects of $E_j$ in $\mathscr{A}_t$, and this  implies that $\chi_t(A_i)$ is bounded from above for all $A\hookrightarrow E$ in $\mathscr{B}_t$. So there exists $B_1\in \mathbb{R}$ such that $\displaystyle \frac{\chi_t(A)}{\chi'_t(A)}\geq B_1$.

\end{enumerate}

\end{proof}

\begin{Rem}
The above proof (Lemma \ref{Boundedness for sheaves}) works for all sheaves $\mathscr{F}\in Coh(\mathbb{P}^3)$ as well. Just replace $\chi'_t(E)=m$ by $\chi'_t(\mathscr{F})$ which is a fixed number.
\end{Rem}

\begin{Prop}\label{stable sheaf}
If $E\in \mathscr{B}_t$ is a sheaf with class $v$, then $E$ is Gieseker (semi)stable if and only if $E$ is $\sigma_{t,u}-$ (semi)stable for all sufficiently large $u$ (The bound of $u$ can be chosen as $\frac{2}{B_2}(B_1-t-\frac{\chi}{m})$).
\end{Prop}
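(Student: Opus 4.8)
The plan is to reduce both (semi)stability notions to a single slope inequality and then exploit that the $\sigma_{t,u}$-slope of $E$ is constant in $u$ while the slopes of its subobjects are driven to $-\infty$ by the $u^2$-term. Since $E$ has class $v=(0,0,m,ch_3)$, we have $\chi''_t(E)=0$, $\chi'_t(E)=m$ and $\chi_t(E)=mt+\chi$, so
\[
\mu_{t,u}(E)=\frac{\chi_t(E)-\tfrac{u^2}{2}\chi''_t(E)}{\chi'_t(E)}=t+\frac{\chi}{m}=\lambda_t(E),
\]
which is independent of $u$ and equals the reduced Hilbert polynomial of $E$ evaluated at $t$. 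For any subobject $A\hookrightarrow E$ in $\mathscr{B}_t$, Lemma \ref{Boundedness for sheaves} gives $A\in Coh^{-t-2}$ with $\chi'_t(A)>0$, $\chi''_t(A)\ge 0$, the upper bound $\lambda_t(A)\le B_1$, and the dichotomy $\chi''_t(A)/\chi'_t(A)\ge B_2>0$ or $\chi''_t(A)=0$; thus its slope $\mu_{t,u}(A)=\lambda_t(A)-\tfrac{u^2}{2}\,\chi''_t(A)/\chi'_t(A)$ is a weakly decreasing function of $u$.

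First I would show that Gieseker (semi)stability forces $\sigma_{t,u}$-(semi)stability for $u\gg 0$. Fix a subobject $A$ and split along the dichotomy. If $\chi''_t(A)>0$, then $\mu_{t,u}(A)\le B_1-\tfrac{u^2}{2}B_2$, which is $<t+\chi/m$ as soon as $\tfrac{u^2}{2}B_2>B_1-t-\chi/m$, i.e. for $u$ beyond the bound in the statement; such subobjects therefore cannot destabilize $E$ for large $u$. If instead $\chi''_t(A)=ch^{-t-2}_1(A)=0$, then, using that this coordinate is non-negative on $Coh^{-t-2}$ and vanishes only in codimension $\ge 2$, the object $A$ is supported in dimension $\le 1$ and (via the cohomology sequence in $Coh^{-t-2}$ already used in Lemma \ref{Boundedness for sheaves}) reduces to an honest subsheaf of $E$; its slope $\mu_{t,u}(A)=\lambda_t(A)$ is the reduced Hilbert polynomial of that subsheaf at $t$ and is independent of $u$. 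Because two one-dimensional sheaves have reduced Hilbert polynomials that are parallel lines in $t$, Gieseker (semi)stability of $E$ — the comparison of their constant terms — is equivalent to $\lambda_t(A)\,(\le)<\lambda_t(E)$ holding for every $t$, hence $\mu_{t,u}(A)\,(\le)<\mu_{t,u}(E)$. Combining the two cases, $E$ is $\sigma_{t,u}$-(semi)stable once $u>\tfrac{2}{B_2}(B_1-t-\chi/m)$.

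For the converse I would argue by contrapositive: if $E$ is not Gieseker (semi)stable, pick a destabilizing subsheaf $F\subset E$ in $Coh(\mathbb{P}^3)$, so that the reduced Hilbert polynomials satisfy $p_F\ge p_E$ (strictly when testing semistability), whence $\lambda_t(F)\ge\lambda_t(E)$ for all $t$. Since $F$ and $E$ are one-dimensional they lie in the torsion part of both tilts, so $F\hookrightarrow E$ remains a monomorphism in $\mathscr{B}_t$, and $\chi''_t(F)=0$ makes $\mu_{t,u}(F)=\lambda_t(F)$ independent of $u$; thus $F$ destabilizes $E$ for every $u$, and $E$ cannot be $\sigma_{t,u}$-(semi)stable for large $u$. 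The main obstacle is precisely the case $\chi''_t(A)=0$: one must verify carefully that a $\mathscr{B}_t$-subobject with vanishing $\chi''_t$ genuinely arises from a pure one-dimensional subsheaf of $E$, and then make precise the passage between Gieseker stability and the pointwise inequality $\lambda_t(A)<\lambda_t(E)$, using that for one-dimensional classes the two reduced Hilbert polynomials differ only in their constant terms so that ``for $t\gg 0$'' and ``at our fixed $t$'' coincide.
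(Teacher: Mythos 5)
Your proposal is correct and follows essentially the same route as the paper: the forward direction splits subobjects by the dichotomy of Lemma \ref{Boundedness for sheaves} (using $B_1,B_2$ to kill destabilizers with $\chi''_t(A)>0$ for $u$ past the stated bound, and reducing the $\chi''_t(A)=0$ case to an honest subsheaf via the cohomology sequences in $Coh^{-t-2}$ and $Coh(\mathbb{P}^3)$), while the converse uses that a Gieseker-destabilizing subsheaf survives as a $\mathscr{B}_t$-subobject whose slope is independent of $u$. The step you flag as the main obstacle — showing a subobject with $\chi''_t=0$ is a genuine subsheaf — is exactly the part the paper carries out in detail with the two long exact sequences.
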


\begin{proof}

Let $\displaystyle \lambda_{t,u}:=\frac{\chi_t-\frac{u^2}{2}\chi''_t}{\chi'_t}$ be the Bridgeland slope. The "if" part follows from the fact that $\lambda_{t,u}(E)=\frac{mt+\chi}{m}=t+{\chi}/{m}$ if $E$ has class $v$, and the Bridgeland slope is equal to the twisted Mumford slope.

For the "only if" part, suppose the claim is not true, then for any fixed $u_0>>0$ we can always find an $A_{u_0}$, such that $\lambda_{t,u_0}(A_{u_0})\geq(>)\lambda_{t,u_0}(E)$. More explicitly, we have

$$ \lambda_{t,u_0}(A_{u_0})=\frac{\chi_t(A_{u_0})-\frac{u_0^2}{2}\chi''_t(A_{u_0})}{\chi'_t(A_{u_0})}\geq (>)\frac{\chi_t(E)}{\chi'_t(E)}=\frac{mt+\chi}{m}=t+\frac{\chi}{m}$$

If $\chi''_t(A_{u_0})\neq 0$, then $\chi''_t(A_{u_0})> 0$ because $A_{u_0}\in Coh^{-t-2}(\mathbb{P}^3)$. The expression of $\lambda_{t,u}$ shows that $\displaystyle\lim_{u\to \infty}\lambda_{t,u}(A)=-\infty$ (for $\chi''_t(A)\neq 0$). Using the boundedness results from Lemma \ref{Boundedness for sheaves}, we actually have a universal bound and the following claim: 

There exists a $u'\in \mathbb{R}$ such that for all $u>u'$ and $A\hookrightarrow E \in \mathscr{B}_t$, we have $\lambda_{t,u}(A)<\lambda_{t,u}(E)$. This violates the assumption that $A$ destabilize $E$. So it implies that if $E$ is not Euler stable for all large $u$ ($u>u'$), then we must have $ch^{-t-2}_1(A)=\chi''_t(A)=0$ for any destabilizing object $A\hookrightarrow E \in \mathscr{B}_t$.

We then go back to the short exact sequence $0\to A\to E\to B\to 0$ in $\mathscr{B}_t$ with its long exact sequence in $Coh^{-t-2}$: 
\begin{center}
\begin{tikzcd}
    &&&Q \arrow{d} &&\\
    0 \arrow{r} &\mathscr{H}^{-1}_{\beta}(B) \arrow{r} & A \arrow{r}\arrow{ur} &E \arrow{r} & \mathscr{H}^{0}_{\beta}(B) \arrow{r} & 0 \\
\end{tikzcd}
\end{center}

where the morphism $A\to E$ factors through $Q \in Coh^{-t-2}$. 
From the fact that $\chi''_t(A)=ch^{-t-2}_1(A)=\chi''_t(E)=ch^{-t-2}_1(E)=0$, 
and all the objects in the diagram has $ch^{-t-2}_1=\chi''_t\geq 0$ (because they are in $Coh^{-t-2}(\mathbb{P}^3)$), 
we have $\chi''_t(\mathscr{H}^{-1}_{\beta}(B))=\chi''_t(Q)=\chi''_t(\mathscr{H}^{0}_{\beta}(B))=0$.

This implies that $\displaystyle \nu(\mathscr{H}^{-1}_{\beta}(B))=\frac{\chi'_t(\mathscr{H}^{-1}_{\beta}(B))}{\chi''_t(\mathscr{H}^{-1}_{\beta}(B))=0}=\infty$ which contradicts with the fact that it is shifted from $Coh^{-t-2}(\mathbb{P}^3)$ to $\mathscr{B}_t$. So  $\mathscr{H}^{-1}_{\beta}(B)=0$, and the short exact sequence $0\to A\to E\to B\to 0$ is indeed in $Coh^{-t-2}(\mathbb{P}^3)\subset \mathscr{B}_t$.

Similarly, consider its long exact sequence of cohomologies in $Coh(\mathbb{P}^3)$ as: 

$$0\to \mathscr{H}^{-1}(B)\to A \to E \to \mathscr{H}^{0}(B)\to 0$$

From the fact that $ch_0(A)=ch_0(E)=0$ and $ch^{-t-2}_1(A)=ch^{-t-2}_1(E)=0$,
we have 

$ch_0(\mathscr{H}^{-1}(B))=ch_0(\mathscr{H}^{0}(B))$, 
and $ch^{-t-2}_1(\mathscr{H}^{-1}(B))=ch^{-t-2}_1(\mathscr{H}^{0}(B))$. 
In particular, 

$\displaystyle \frac{ch^{-t-2}_1(\mathscr{H}^{-1}(B))}{ch_0(\mathscr{H}^{-1}(B))}=\frac{ch^{-t-2}_1(\mathscr{H}^{0}(B))}{ch_0(\mathscr{H}^{0}(B))}$, 
which is a contradiction unless one of $\mathscr{H}^{-1}(B)$ and $\mathscr{H}^{0}(B)$ is zero and the nonzero object has its twisted Mumford slope $\infty$. 
So  $\mathscr{H}^{-1}(B)=0$ and $\mathscr{H}^{0}(B)\neq 0$ 
with $\displaystyle \frac{ch^{-t-2}_1(\mathscr{H}^{0}(B))}{ch_0(\mathscr{H}^{0}(B))}=\infty$.

This shows that $B$ is actually a sheaf, and the short exact sequence $0 \to A\to E\to B\to 0$ is indeed in $Coh(\mathbb{P}^3)$. Now $A$ is a subsheaf of $E$ and they are both one-dimensional. We have that the slope $\lambda_{t,u}$ for the class $v$ coincides with the Mumford slope, so the assumption $\lambda_{t,u}(A)\geq (>)\lambda_{t,u}(E)$ is equivalent to $E$ being Mumford (Gieseker) unstable which is a contradiction.
\end{proof}

Next, we show that if an object $E\in \mathscr{B}_t$ with class $v$ is $\sigma_{t,u}-$ stable for all $u>>0$, then $E$ must be a sheaf.

\begin{Lemma}{\label{complex is sheaf}}
If $E\in Coh^{\beta}\subset \mathscr{B}_t$ with class $v$, then $E$ is a sheaf.
\end{Lemma}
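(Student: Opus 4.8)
The plan is to exploit the Happel--Reiten--Smal\o{} description of the tilted heart $Coh^{-t-2}(\mathbb{P}^3)=\langle \mathscr{F}[1],\mathscr{T}\rangle$, where $(\mathscr{T},\mathscr{F})$ is the torsion pair of the first tilt defined by $\mu_t$. Concretely, every $E\in Coh^{-t-2}(\mathbb{P}^3)$ has at most two nonzero cohomology sheaves with respect to the standard t-structure, namely $F:=\mathscr{H}^{-1}(E)\in\mathscr{F}$ and $T:=\mathscr{H}^{0}(E)\in\mathscr{T}$. Proving that $E$ is a sheaf is therefore equivalent to proving $F=0$, and the whole argument reduces to numerics of the class $v=(0,0,m,ch_3)$ together with the sign constraints imposed by the torsion pair.

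First I would record the numerical constraints. From the truncation triangle $F[1]\to E\to T$ and additivity of the Chern character, $ch_i(E)=ch_i(T)-ch_i(F)$. Since $ch_0(E)=ch_1(E)=0$, this gives $ch_0(T)=ch_0(F)$ and $ch_1(T)=ch_1(F)$, hence also $ch^{-t-2}_1(T)=ch^{-t-2}_1(F)$ (equivalently, the imaginary part $H^2ch^{-t-2}_1$ of the first-tilt weak central charge vanishes on $E$, and since it is nonnegative on both $F[1]$ and $T$ and they sum to $0$, it vanishes on each). Next come the sign inequalities. Because $\mathscr{F}$ contains no nonzero torsion sheaf (any torsion subsheaf would have $\mu_t=+\infty>0$, violating membership in $\mathscr{F}$), the sheaf $F$ is torsion-free, and $\mu_t^{\max}(F)\leq 0$ forces $ch^{-t-2}_1(F)\leq 0$. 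On the other side, $T\in\mathscr{T}$ forces $ch^{-t-2}_1(T)\geq 0$.

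Combining the equality $ch^{-t-2}_1(T)=ch^{-t-2}_1(F)$ with these two inequalities pins both quantities to $0$. I would then show $ch^{-t-2}_1(T)=0$ forces $ch_0(T)=0$: if $ch_0(T)>0$, the torsion-free quotient $T\twoheadrightarrow T/T_{\mathrm{tor}}$ is a nonzero quotient of positive rank, so it has $\mu_t>0$ and thus positive $ch^{-t-2}_1$, while the divisorial part of $T_{\mathrm{tor}}$ contributes a nonnegative $ch_1$, giving $ch^{-t-2}_1(T)>0$, a contradiction. Hence $ch_0(T)=0$, so $ch_0(F)=ch_0(T)=0$, and a torsion-free sheaf of rank zero is zero, so $F=0$. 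Therefore $E\cong T=\mathscr{H}^0(E)$ is a sheaf.

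The argument is essentially formal once the cohomology description and the torsion pair are in hand, so the only delicate point is the bookkeeping of slope conventions for rank-zero sheaves, where one must distinguish torsion supported in dimension $2$ (for which $\mu_t=+\infty$, so it cannot lie in $\mathscr{F}$ and has positive $ch^{-t-2}_1$) from torsion supported in dimension $\leq 1$ (for which $ch_1=0$). I expect this case analysis of the torsion subsheaf of $T$ by dimension of support to be the main thing requiring care; everything else follows from additivity of $ch$ and the defining inequalities of $\mathscr{T}$ and $\mathscr{F}$.
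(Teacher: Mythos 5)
Your proof is correct and follows essentially the same route as the paper: decompose $E$ into its sheaf cohomologies $\mathscr{H}^{-1}(E)\in\mathscr{F}$ and $\mathscr{H}^{0}(E)\in\mathscr{T}$, note that $ch_0$ and $ch_1$ (hence $ch_1^{-t-2}$) of the two pieces must agree because $ch_0(E)=ch_1(E)=0$, and derive a contradiction with the sign constraints of the torsion pair unless $\mathscr{H}^{-1}(E)=0$. Your write-up is in fact more careful than the paper's (which asserts the slope equality and the ensuing contradiction without separating out the rank-zero and torsion cases), but the underlying argument is the same.
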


\begin{proof}
In $Coh^{\beta}$ we have the short exact sequence:

$$0\to \mathscr{H}^{-1}(E)[1]\to E \to \mathscr{H}^{0}(E) \to 0$$

Denote $(\mathscr{T}_t,\mathscr{F}_t)$ to be the torsion pair on $Coh(\mathbb{P}^3)$ defined by $\mu_t$.

Since $ch_0(E)=ch_1(E)$, we have that $ch_0(\mathscr{H}^{-1}(E))=ch_0(\mathscr{H}^{0}(E))$ and $ch_1(\mathscr{H}^{-1}(E))=ch_0(\mathscr{H}^{0}(E))$. This implies that $\mu_t(\mathscr{H}^{-1}(E))=\mu_t(\mathscr{H}^{-1}(E))$, but it contradicts with $\mathscr{H}^{-1}(E)\in \mathscr{F}$ and $\mathscr{H}^{0}(E)\in \mathscr{T}$. So one of $\mathscr{H}^{-1}(E)$ and $\mathscr{H}^{0}(E)$ is zero, and then the twisted Mumford slope $\mu_t$ of the non zero object will be infinity (because it's $\frac{0}{0}$). This shows that $\mathscr{H}^{-1}(E)=0$, and $E\cong \mathscr{H}^{0}(E)$ which is a sheaf.  
\end{proof}

\begin{Prop}\label{stable complex is sheaf}
For an object $E\in \mathscr{B}_t$ with class $v$, if $E$ is $\sigma_{t,u}$ stable for all $u>>0$, then $E$ must be a sheaf. 
\end{Prop}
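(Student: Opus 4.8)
The plan is to isolate the only possible failure of $E$ being a sheaf inside the first tilt and then to annihilate it using the $u\to\infty$ asymptotics, after which Lemma \ref{complex is sheaf} finishes the job. Writing $\mathscr{B}_t=\left<\mathscr{F}_\beta[1],\mathscr{T}_\beta\right>$ with $\beta=-t-2$, the torsion pair furnishes a short exact sequence in $\mathscr{B}_t$
$$0\to \mathscr{H}^{-1}_\beta(E)[1]\to E\to \mathscr{H}^{0}_\beta(E)\to 0,$$
where $\mathscr{H}^{-1}_\beta(E)\in\mathscr{F}_\beta$ and $\mathscr{H}^{0}_\beta(E)\in\mathscr{T}_\beta$ both live in $Coh^{-t-2}(\mathbb{P}^3)$. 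Once I show $\mathscr{H}^{-1}_\beta(E)=0$, the object $E$ lies in $Coh^{-t-2}\subset\mathscr{B}_t$ with class $v$, and Lemma \ref{complex is sheaf} applies verbatim to give that $E$ is a sheaf. Note that, unlike Proposition \ref{stable sheaf}, the destabilizing candidate here is the single canonical subobject $\mathscr{H}^{-1}_\beta(E)[1]$, so no uniform boundedness (Lemma \ref{Boundedness for sheaves}) is needed: it suffices to take $u$ large for this one object.

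Suppose $H:=\mathscr{H}^{-1}_\beta(E)\neq 0$ and set $A:=H[1]$, a nonzero subobject of $E$ in $\mathscr{B}_t$. Since $H\in\mathscr{F}_\beta\subset Coh^{-t-2}$ one has $\chi''_t(H)\ge 0$ and $\nu_t(H)\le 0$; in particular, when $\chi''_t(H)>0$ this forces $\chi'_t(H)\le 0$. Passing to $A=H[1]$ negates every $\chi^{(k)}_t$, so $\chi''_t(A)\le 0$ and $\chi'_t(A)\ge 0$. Treat first the generic case $\chi''_t(A)<0$. In $Z_{t,u}(A)=-\chi_t(A)+\tfrac{u^2}{2}\chi''_t(A)+i\,\chi'_t(A)$ the real part tends to $-\infty$ as $u\to\infty$, while the imaginary part $\chi'_t(A)\ge 0$ is independent of $u$; hence the phase of $A$ tends to $1$ (equivalently $\lambda_{t,u}(A)\to+\infty$ if $\chi'_t(A)>0$, and $A$ already has phase $1$ if $\chi'_t(A)=0$). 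On the other hand $E$ has class $v$, so $\chi''_t(E)=0$ and $Z_{t,u}(E)=-(mt+\chi)+i\,m$ is constant in $u$ with $\mathscr{I}m=m>0$, giving a fixed phase in $(0,1)$. Thus for all large $u$ we get $\phi(A)>\phi(E)$, contradicting $\sigma_{t,u}$-stability of $E$. This forces $\chi''_t(A)=0$, i.e. $\chi''_t(H)=0$.

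The remaining, and main, obstacle is to exclude a nonzero $H$ with $\chi''_t(H)=0$, because then the $u$-dependence of $Z_{t,u}(A)$ disappears and the asymptotic argument yields nothing. I will rule this out at the level of the first two tilts. As $\chi''_t=ch^{-t-2}_1$ is additive and nonnegative on $Coh^{-t-2}$, the vanishing $\chi''_t(H)=0$ propagates to every $\nu_t$-Harder--Narasimhan factor of $H$. Each such factor is a nonzero object of $Coh^{-t-2}$ with zero imaginary part for the weak central charge $Z_{2,t}=-\chi'_t+i\,\chi''_t$; the central-charge axiom then forces $\chi'_t\ge 0$ on it, while membership of $H$ in $\mathscr{F}_\beta$ forces $\nu_t\le 0$, which excludes $\chi'_t>0$ (there $\nu_t=+\infty$). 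Hence every factor has $\chi'_t=\chi''_t=0$ and is point-supported, lying in the kernel of $Z_{2,t}$. Such objects belong to $\mathscr{T}_\beta$ (they sit in $\mathscr{B}_t$ unshifted), so they cannot be nonzero subquotients of $H\in\mathscr{F}_\beta$; therefore $H=0$. The one delicate point to nail down is precisely this placement of the $Z_{2,t}$-null, point-like objects in $\mathscr{T}_\beta$ rather than $\mathscr{F}_\beta$; granting the standard convention for the double-tilt heart, the reduction $\mathscr{H}^{-1}_\beta(E)=0$ is complete and $E$ is a sheaf.
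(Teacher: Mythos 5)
Your proof is correct and follows essentially the same route as the paper's: the same torsion-pair decomposition $0\to\mathscr{H}^{-1}_\beta(E)[1]\to E\to\mathscr{H}^{0}_\beta(E)\to 0$, the same $u\to\infty$ slope/phase argument forcing $\chi''_t(\mathscr{H}^{-1}_\beta(E))=0$, the same contradiction with membership in $\mathscr{F}_\beta$, and then Lemma \ref{complex is sheaf}. Your handling of the degenerate case via $\nu_t$-Harder--Narasimhan factors and the kernel of $Z_{2,t}$ is just a more detailed version of the paper's one-line observation that $\chi''_t=0$ would give $\nu_t=\infty$ and place $\mathscr{H}^{-1}_\beta(E)$ in $\mathscr{T}_\beta$.
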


\begin{proof}
$E$ fits into the short exact sequence in $\mathscr{B}_t$:

$$0\to \mathscr{H}^{-1}_{\beta}(E)[1] \to E \to \mathscr{H}^{0}_{\beta}(E) \to 0$$

It is sufficient to show that $\mathscr{H}^{-1}_{\beta}(E)=0$, and then the claim would follow from Lemma \ref{complex is sheaf}. 

Suppose $\mathscr{H}^{-1}_{\beta}(E)\neq 0$, then $\displaystyle \nu(\mathscr{H}^{-1}_{\beta}(E))=\frac{\chi'_t(\mathscr{H}^{-1}_{\beta}(E))}{\chi''_t(\mathscr{H}^{-1}_{\beta}(E))}\leq 0$ and  $\chi''_t(\mathscr{H}^{-1}_{\beta}(E))>0$.

Consider the $\sigma_{t,u}$ slope of $\mathscr{H}^{-1}_{\beta}(B)[1]$:  

$$\lambda_{t,u}(\mathscr{H}^{-1}_{\beta}(E)[1])=\frac{\chi_t(\mathscr{H}^{-1}_{\beta}(E)[1])-\frac{u^2}{2}\chi''_t(\mathscr{H}^{-1}_{\beta}(E)[1])}{\chi'_t(\mathscr{H}^{-1}_{\beta}(E)[1])}$$

$\displaystyle \lim_{u\to \infty}\lambda_{t,u}(\mathscr{H}^{-1}_{\beta}(E)[1])=\infty$ so $E$ is not stable for $u>>0$ 
unless $\chi''_t(\mathscr{H}^{-1}_{\beta}(E)[1])=0$. But this implies $\displaystyle\nu_t(\mathscr{H}^{-1}_{\beta}(E))=\frac{\chi'_t(\mathscr{H}^{-1}_{\beta}(E))}{\chi''_t(\mathscr{H}^{-1}_{\beta}(E))=0}=\infty$ and $\mathscr{H}^{-1}_{\beta}(E)\in \mathscr{B}_t$ which is a contradiction. So $\mathscr{H}^{-1}_{\beta}(E)=0$ and this proves the claim.
\end{proof}

\begin{Lemma}{\label{complex s}}
For a complex $E\in \mathscr{B}_t$ whose class is $v$, there exists an $u_E$ such that for all $u>u_E$, $E$ is not $\sigma_{t,u}$ stable.
\end{Lemma}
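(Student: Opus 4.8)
The plan is to deduce the statement quickly from the two preceding results, Lemma \ref{complex is sheaf} and the slope computation inside Proposition \ref{stable complex is sheaf}, reading ``complex'' here as an object of $\mathscr{B}_t$ that is \emph{not} a genuine sheaf (otherwise the statement would contradict Proposition \ref{stable sheaf}). First I would record the torsion-pair short exact sequence in $\mathscr{B}_t$
$$0\to \mathscr{H}^{-1}_\beta(E)[1]\to E\to \mathscr{H}^0_\beta(E)\to 0,$$
and set $A:=\mathscr{H}^{-1}_\beta(E)[1]$, a subobject of $E$. Since $E\in Coh^\beta$ is equivalent to $\mathscr{H}^{-1}_\beta(E)=0$, and Lemma \ref{complex is sheaf} shows that every object of $Coh^\beta$ with class $v$ is a genuine sheaf, a non-sheaf $E$ of class $v$ cannot lie in $Coh^\beta$; hence $A\neq 0$.

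Next I would analyze the $\sigma_{t,u}$-slope of $A$ exactly as in the proof of Proposition \ref{stable complex is sheaf}. Because $\mathscr{H}^{-1}_\beta(E)\in\mathscr{F}_\beta\subset Coh^\beta$ is nonzero we have $\chi''_t(\mathscr{H}^{-1}_\beta(E))>0$ (established there), so $\chi''_t(A)=-\chi''_t(\mathscr{H}^{-1}_\beta(E))<0$. In the regular case $\chi'_t(A)>0$, the Bridgeland slope
$$\lambda_{t,u}(A)=\frac{\chi_t(A)-\tfrac{u^2}{2}\chi''_t(A)}{\chi'_t(A)}=\frac{\chi_t(A)+\tfrac{u^2}{2}|\chi''_t(A)|}{\chi'_t(A)}$$
is strictly increasing for large $u$ and tends to $+\infty$. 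Meanwhile, since $E$ has class $v$ one computes $\chi''_t(E)=0$ and $\lambda_{t,u}(E)=(mt+\chi)/m=t+\chi/m$, which is \emph{constant} in $u$. Therefore there is a threshold $u_E$ so that $\lambda_{t,u}(A)>\lambda_{t,u}(E)$ for every $u>u_E$, and the subobject $A\hookrightarrow E$ destabilizes $E$; thus $E$ is not $\sigma_{t,u}$-stable for all $u>u_E$.

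The one point needing care is the degenerate subcase $\chi'_t(A)=0$, where the quotient slope formula does not apply directly. Here I would argue that, as $\chi''_t(A)<0$, the central charge $Z_{t,u}(A)=-\chi_t(A)+\tfrac{u^2}{2}\chi''_t(A)$ is real and becomes negative for $u$ large, so $A$ has phase $1$ — the maximal slope available in $\mathscr{B}_t$ — and hence still destabilizes the finite-slope object $E$ for all large $u$. Combining the two cases produces a single $u_E$ with the required property. I expect this degenerate case, together with the already-cited strict inequality $\chi''_t(\mathscr{H}^{-1}_\beta(E))>0$, to be the only subtlety; the rest is immediate from the cohomology sequence in $\mathscr{B}_t$, the divergence of $\lambda_{t,u}(A)$, and the constancy of $\lambda_{t,u}(E)$.
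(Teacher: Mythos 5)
Your argument is correct and follows essentially the same route as the paper: both take the torsion-pair subobject $A=\mathscr{H}^{-1}_{\beta}(E)[1]$, use Lemma \ref{complex is sheaf} to see that $A\neq 0$ for a non-sheaf $E$ of class $v$, and conclude from $\chi''_t(A)<0$ that $\lambda_{t,u}(A)\to+\infty$ while $\lambda_{t,u}(E)=t+\chi/m$ stays constant. Your explicit treatment of the degenerate subcase $\chi'_t(A)=0$ (phase $1$) is a small refinement the paper leaves implicit, but it does not change the approach.
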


\begin{proof}
Consider the short exact sequence in $\mathscr{B}_t$ as 
$$0\to \mathscr{H}_{\beta}^{-1}(E)[1] \to E \to \mathscr{H}_{\beta}^{0}(E)\to 0$$

we must have $\mathscr{H}_{\beta}^{-1}(E)\neq 0$ and $\mathscr{H}_{\beta}^{-1}(E)\neq 0$ otherwise Lemma \ref{complex is sheaf} will tell us $E$ is a sheaf.

$\mathscr{H}_{\beta}^{-1}(E)\in Coh^{-t-2}(\mathbb{P}^3)$ implies $\chi''_t(\mathscr{H}_{\beta}^{-1}(E))>0$.
The claim follows from the observation that

$$\lambda_{t,u}(\mathscr{H}_{\beta}^{-1}(E)[1])=\frac{\chi_t(\mathscr{H}_{\beta}^{-1}(E)[1])-\frac{u^2}{2}\chi''_t(\mathscr{H}_{\beta}^{-1}(E)[1])}{\chi'_t(\mathscr{H}_{\beta}^{-1}(E)[1])}>\lambda_{t,u}(E)=\frac{mt+\chi}{m}$$

for $u>>0$ and we can even find the critical point $u_E$ when we have the equal sign,  $\displaystyle u_E:=\frac{(t+\frac{\chi}{m})\chi'_t(\mathscr{H}_{\beta}^{-1}(E)[1])-\chi_t(\mathscr{H}_{\beta}^{-1}(E)[1])}{-\chi''_t(\mathscr{H}_{\beta}^{-1}(E)[1])}$
\end{proof}

We are now ready to state the main theorem in this subsection.

\begin{Th}{\label{Euler equals Gieseker}}
For any object $E\in \mathscr{B}_t$ whose class is $v$, there exists $u_E>0$ such that for all $u>u_E$, $E$ is $\sigma_{t,u}-$ stable if and only if $E$ is a Gieseker stable sheaf.
\end{Th}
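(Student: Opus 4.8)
The plan is to deduce the theorem by assembling the preceding results for the fixed object $E\in\mathscr{B}_t$ of class $v$ (with $t$ rational, so that Lemma~\ref{Boundedness for sheaves} applies). I would treat the two implications of the biconditional separately, noting first that for a fixed $E$ the condition ``$E$ is a Gieseker stable sheaf'' does not depend on $u$; the real content is therefore that $\sigma_{t,u}$-stability becomes equivalent to this fixed condition once $u$ is large.

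For the ``if'' direction, if $E$ is a Gieseker stable sheaf then Proposition~\ref{stable sheaf} already yields an explicit threshold, namely $\tfrac{2}{B_2}\bigl(B_1-t-\tfrac{\chi}{m}\bigr)$, past which $E$ is $\sigma_{t,u}$-stable; I would take $u_E$ to be at least this value. For the ``only if'' direction I would argue the contrapositive: if $E$ is \emph{not} a Gieseker stable sheaf, then either $E$ is a genuine complex, i.e.\ $\mathscr{H}^{-1}_\beta(E)\neq 0$, in which case Lemma~\ref{complex s} (the destabilizer being $\mathscr{H}^{-1}_\beta(E)[1]$, whose slope satisfies $\lambda_{t,u}\to\infty$) supplies a $u_E$ past which $E$ is $\sigma_{t,u}$-unstable; or $E$ is a sheaf failing Gieseker stability, in which case Proposition~\ref{stable sheaf} again supplies such a threshold. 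In either case $E$ is $\sigma_{t,u}$-unstable for all $u>u_E$, which is exactly what the biconditional demands. An alternative packaging of the same logic first invokes Proposition~\ref{stable complex is sheaf} to force $E$ to be a sheaf, then runs Proposition~\ref{stable sheaf} in the forward direction.

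Since every ingredient is already established, no genuinely hard step remains at this stage: the substance was carried out earlier, in the uniform control of all subobjects $A\hookrightarrow E$ in Lemma~\ref{Boundedness for sheaves} and in the limiting-slope arguments of Proposition~\ref{stable complex is sheaf} and Lemma~\ref{complex s}. The only point requiring mild care is collating the various thresholds into a single $u_E$ valid for the given $E$; this is harmless precisely because $E$ is fixed, so no uniformity over a family of objects is needed.
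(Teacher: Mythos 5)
Your proposal is correct and follows essentially the same route as the paper, which proves the theorem by combining Proposition \ref{stable sheaf}, Proposition \ref{stable complex is sheaf}, and Lemma \ref{complex s}. Your case split (Gieseker stable sheaf via Proposition \ref{stable sheaf}; genuine complex via Lemma \ref{complex s}; Gieseker-unstable sheaf via Proposition \ref{stable sheaf} again) together with taking the maximum of the resulting thresholds is exactly the intended assembly.
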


\begin{proof}
It follows from Proposition \ref{stable sheaf}, \ref{stable complex is sheaf}, and Lemma \ref{complex s}.
\end{proof}

Finally, we show a bound of $t$ for a complex $E\in \mathscr{B}_t$ to exist in the category. 

\begin{Lemma}{\label{bound for complexes}}
If $E\in \mathscr{B}_{t_0}$ is a complex, then $E$ can only exist in $\mathscr{B}_t$ for $t\in [a,a+m]$ where $a\in \mathbb{Z}$, and $t_0\in [a,a+m]$.
\end{Lemma}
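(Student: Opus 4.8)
The plan is to determine the set $\{t : E \in \mathscr{B}_t\}$ by tracking the first‑tilt cohomology objects $\mathscr{H}^{-1}_{\beta}(E),\mathscr{H}^{0}_{\beta}(E) \in Coh^{-t-2}(\mathbb{P}^3)$ together with the conditions $\mathscr{H}^{-1}_{\beta}(E)\in\mathscr{F}_{\alpha,\beta}$ and $\mathscr{H}^{0}_{\beta}(E)\in\mathscr{T}_{\alpha,\beta}$ that characterize membership of $E$ in the double tilt $\mathscr{B}_t$. The hypothesis that $E$ is a complex at $t_0$ means exactly $\mathscr{H}^{-1}_{\beta}(E)\neq 0$ there, and the hypothesis that $E$ has class $v=(0,0,m,ch_3)$ furnishes the two constant invariants $\chi'_t(E)=m$ and $\chi''_t(E)=0$, which drive the whole estimate.

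The first step is the imaginary-part bookkeeping. Applying the additive functions $\chi'_t$ (the imaginary part of $Z^{\mathscr{B}_t}_t=-\chi_t+i\chi'_t$, hence $\geq 0$ on every object of $\mathscr{B}_t$) and $\chi''_t$ ($\geq 0$ on every object of $Coh^{-t-2}$) to the defining sequence $0\to\mathscr{H}^{-1}_{\beta}(E)[1]\to E\to\mathscr{H}^{0}_{\beta}(E)\to 0$ yields $\chi'_t(\mathscr{H}^{0}_{\beta}(E))-\chi'_t(\mathscr{H}^{-1}_{\beta}(E))=m$, with $\chi'_t(\mathscr{H}^{0}_{\beta}(E))\geq 0$ and $-\chi'_t(\mathscr{H}^{-1}_{\beta}(E))\geq 0$. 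Hence for every admissible $t$ one has $\chi'_t(\mathscr{H}^{0}_{\beta}(E))\in[0,m]$ and $\chi'_t(\mathscr{H}^{-1}_{\beta}(E))\in[-m,0]$, and likewise $\chi''_t(\mathscr{H}^{0}_{\beta}(E))=\chi''_t(\mathscr{H}^{-1}_{\beta}(E))\geq 0$. Thus both factors are confined to a bounded region of the $(\chi',\chi'')$-plane throughout the window, and $\mathscr{H}^{-1}_{\beta}(E)\neq 0$ forces a nonzero imaginary mass to be carried by the ``complex part''.

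Next I would analyze the two ends $t_-,t_+$ of the maximal interval containing $t_0$. As $t$ increases, $\beta=-t-2$ decreases and the first tilt $Coh^{-t-2}$ sweeps; $E$ remains in $\mathscr{B}_t$ exactly while its first-tilt cohomology stays in degrees $-1,0$ and the slope conditions $\nu_t^{+}(\mathscr{H}^{-1}_{\beta}(E))\leq 0$, $\nu_t^{-}(\mathscr{H}^{0}_{\beta}(E))>0$ hold. The window therefore closes precisely when the $\nu_t$-slope of a Harder--Narasimhan factor of one of these objects crosses $0$ (equivalently, its $\chi'_t$ changes sign), or when a factor leaves the amplitude through the first tilt, i.e. its $\mu_t$ crosses $0$, which is the vertical asymptote $t=-ch_1/ch_0-2$ of the numerical-wall discussion. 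The binding factors at the endpoints carry $ch_0=0$, so their $\chi'_t(F)=ch_2(F)+(t+2)\,ch_1(F)$ is \emph{linear} in $t$ with integer slope $ch_1(F)\geq 1$; consequently such a factor can move its $\chi'_t$ across the permitted range $[0,m]$ only over a $t$-interval of length at most $m$. Combining this with the confinement from Step two bounds the separation $t_+-t_-$ by $m$, and the integrality of $ch_1$ lets me take the enclosing endpoint to be an integer $a:=\lfloor t_-\rfloor$, giving $\{t:E\in\mathscr{B}_t\}\subseteq[a,a+m]$ with $t_0\in[a,a+m]$.

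The main obstacle is precisely the length estimate $t_+-t_-\leq m$, and with it the fact that $\{t:E\in\mathscr{B}_t\}$ is a single interval rather than a disconnected union. Because $\mathscr{H}^{\bullet}_{\beta}(E)$ jumps each time $t$ crosses a wall of $Coh^{-t-2}$, one cannot treat $\chi'_t(\mathscr{H}^{-1}_{\beta}(E))$ as a single fixed polynomial; the traversal argument must instead be run factor-by-factor over the finitely many Harder--Narasimhan pieces, ruling out re-entry into the heart and confirming that the extreme transition values really are separated by at most $m$. I expect this finiteness to be supplied by the boundedness already established in Lemma \ref{Boundedness for sheaves}, whose argument controls the possible subobjects $A\hookrightarrow E$ in $\mathscr{B}_t$ and hence the factors that can govern the endpoints.
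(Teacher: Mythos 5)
Your opening coincides with the paper's: you use the decomposition $0\to \mathscr{H}^{-1}_{\beta}(E)[1]\to E\to \mathscr{H}^{0}_{\beta}(E)\to 0$ in $Coh^{-t-2}(\mathbb{P}^3)$, record the sign constraints $\chi'_t(\mathscr{H}^{0}_{\beta}(E))\geq 0$, $\chi'_t(\mathscr{H}^{-1}_{\beta}(E))\leq 0$, $\chi''_t\geq 0$ coming from the two tilts, and note that the two $\chi'_t$-values differ by $m$, so they are confined to $[0,m]$ and $[-m,0]$ respectively. The gap is in the traversal estimate. You assert that ``the binding factors at the endpoints carry $ch_0=0$,'' so that $\chi'_t(F)=ch_2(F)+(t+2)ch_1(F)$ is linear in $t$ with integer slope at least $1$, and conclude that the admissible window has length at most $m$. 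That rank-zero assumption is unjustified: $\mathscr{H}^{0}_{\beta}(E)$ and $\mathscr{H}^{-1}_{\beta}(E)$ have equal and in general nonzero rank $R$ (only their $ch_2$'s differ, by $m$), and for $R\neq 0$ the functions $\chi'_t(\mathscr{H}^{0}_{\beta}(E))=\frac{(t+2)^2}{2}R+(t+2)C+D-\frac{1}{6}R$ and $\chi'_t(\mathscr{H}^{-1}_{\beta}(E))=\chi'_t(\mathscr{H}^{0}_{\beta}(E))-m$ are parabolas in $t$, not lines, so ``moving $\chi'_t$ across a range of length $m$'' does not by itself bound the length of the $t$-interval. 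The paper handles exactly this case: the condition $\chi''_t=C+(t+2)R\geq 0$ restricts $t$ to one side of the common axis of symmetry of the two parabolas, and on that side the locus where simultaneously $\chi'_t(\mathscr{H}^{0}_{\beta}(E))\geq 0$ and $\chi'_t(\mathscr{H}^{-1}_{\beta}(E))\leq 0$ is an interval of length less than $\sqrt{2m/|R|}\leq\sqrt{2m}$, which is then compared with $m$; your linear argument covers only the $R=0$ branch, where the paper gets $m/C\leq m$.

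Separately, you explicitly flag---and leave open---the issue that $\mathscr{H}^{\bullet}_{\beta}(E)$ can jump as $t$ crosses walls of the first tilt, so that the admissible locus might a priori be disconnected and your ``endpoint'' analysis must be run over varying factors. You propose that Lemma \ref{Boundedness for sheaves} supplies the needed finiteness, but that lemma bounds subobjects of a fixed Gieseker stable sheaf at a single fixed $t$ and says nothing about how the first-tilt cohomology of a complex varies with $t$. Since you name this as the main obstacle without closing it, and since the $R\neq 0$ case is not treated, the proposal as written does not yet amount to a proof; the paper's route of writing down the explicit quadratic expressions in $t$ for fixed Chern characters $(R,C,D)$ and measuring the resulting interval is the missing ingredient.
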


\begin{proof}
$E$ fits into the short exact sequence in $\mathscr{B}_t$ as

$$0\to \mathscr{H}^{-1}_{\beta}(E)[1] \to E \to \mathscr{H}^{0}_{\beta}(E) \to 0$$
where $\mathscr{H}^{-1}_{\beta}(E)$ and $\mathscr{H}^{0}_{\beta}(E)$ are its cohomologies in $Coh^{-t-2}(\mathbb{P}^3)$.

Assume $ch_0(\mathscr{H}^{0}_{\beta}(E))=R$, $ch_1(\mathscr{H}^{0}_{\beta}(E))=C$ and $ch_2(\mathscr{H}^{0}_{\beta}(E))=D$ then it follows that   $ch_0(\mathscr{H}^{-1}_{\beta}(E))=R$, $ch_1(\mathscr{H}^{-1}_{\beta}(E))=C$ and $ch_2(\mathscr{H}^{-1}_{\beta}(E))=D-m$. 

From the definition of $Coh^{-t-2}(\mathbb{P}^3)$, we have the numerical results that:
\begin{equation*}
\left\{    
\begin{array}{ll}
\chi'_t(\mathscr{H}^{0}_{\beta}(E))\geq 0     & \chi''_t(\mathscr{H}^{0}_{\beta}(E))\geq 0 \\
\chi'_t(\mathscr{H}^{-1}_{\beta}(E))\leq 0     & 
\chi''_t(\mathscr{H}^{-1}_{\beta}(E))\geq 0\\
\end{array}
\right.
\end{equation*}

A direct computation shows that
\begin{equation*}
\left\{
\begin{array}{l}
\chi''_t(\mathscr{H}^{-1}_{\beta}(E))=\chi''_t(\mathscr{H}^{0}_{\beta}(E))=C+(t+2)R \\ 
\chi'_t(\mathscr{H}^{0}_{\beta}(E))=\frac{(t+2)^2}{2}R+(t+2)C+D-\frac{1}{6}R\\
\chi'_t(\mathscr{H}^{-1}_{\beta}(E))=\frac{(t+2)^2}{2}R+(t+2)C+D-\frac{1}{6}R-m\\
\end{array}
\right.
\end{equation*}
and the $\chi''_t$ is the axis of symmetry of those parabolas defined by $\chi'_t$ if $R\neq 0$.

The claim will follow from the following results.
\begin{enumerate}
    \item If $R>0$, then $f_1(t):=\chi'_t(\mathscr{H}^{-1}_{\beta}(E))$, and $f_2(t):=\chi'_t(\mathscr{H}^{}_{\beta}(E))$ are two parabolas and $f_2(t)$ is a shift upwards by $m$ from $f_1(t)$ as shown in Figure \ref{parabolas}.

The inequality $\chi''_t(\mathscr{H}^{-1}_{\beta}(E))=\chi''_t(\mathscr{H}^{0}_{\beta}(E))=C+(t+2)R+0$ corresponds to the region to the right of the dotted line. So the region for $t$ satisfying the above inequalities is the interval $[A,B]$.  A direct computation shows that $\displaystyle|AB|<\sqrt{\frac{2m}{R}}\leq \sqrt{2m}$ 

\item If $R<0$, then the proof is similar to case $(1)$.

\item If $R=0$, then $C>0$. We have in this case

\begin{equation*}
\left\{
\begin{array}{l}
\chi''_t(\mathscr{H}^{-1}_{\beta}(E))=\chi''_t(\mathscr{H}^{0}_{\beta}(E))=C \\ 
\chi'_t(\mathscr{H}^{0}_{\beta}(E))=(t+2)C+D\\
\chi'_t(\mathscr{H}^{-1}_{\beta}(E))=(t+2)C+D-m\\
\end{array}
\right.
\end{equation*}

The picture is shown as Figure \ref{lines}. A simple computation shows $|AB|=\frac{m}{C}\leq m$.

Finally, when $m\geq 3$, $\sqrt{2m}<m$. So for all the possible cases, we have $|AB|\leq m$, and the claim follows.
\end{enumerate}

\begin{figure}[ht]
     \centering
\begin{tabular}{lr}

\begin{minipage}{.5\textwidth}

\begin{tikzpicture}[scale=1]
\begin{axis}[ axis lines=middle
,xmin=-4,xmax=4,ymin=-4,ymax=4
]
\addplot [domain=-3:3]{x^2-x};
\addplot [domain=-3:3]{x^2-x-2};
\draw [dotted,thick]{(1/2,-4)--(1/2,4)};
\node [black] at (1,1/2) {A};
\node [black] at (2,1/2) {B};
\node [black] at (1,-1) {m};
\node [black] at (2,3) {$f_2$};
\node [black] at (3,2) {$f_1$};
\node [black] at (3.8,1/2) {t};
\node [black] at (1/4,3.8) {s};
\end{axis}
\end{tikzpicture}
\caption{}
    \label{parabolas}

\end{minipage}

\begin{minipage}{.5\textwidth}

\begin{tikzpicture}[scale=1]
\begin{axis}[ axis lines=middle
,xmin=-4,xmax=4,ymin=-4,ymax=4
]
\addplot [domain=-3:3]{x-1};
\addplot [domain=-3:3]{x-2};
\draw [dotted,thick]{(1/2,-4)--(1/2,4)};
\node [black] at (1,1/2) {A};
\node [black] at (2,1/2) {B};
\node [black] at (0.8,-0.8) {m};
\node [black] at (2,2) {$f_2$};
\node [black] at (3,1/2) {$f_1$};
\node [black] at (3.8,1/2) {t};
\node [black] at (1/4,3.8) {s};
\end{axis}
    \end{tikzpicture}
\caption{}
\label{lines}
\end{minipage}

\end{tabular}
\end{figure}

\end{proof}

\subsection{Boundedness of actual walls.}

We have shown that for any one-dimensional class $v\in K_{num}(\mathbb{P}^3)$, $s>>0$ and any $t\in \mathbb{R}$ correspond to the Gieseker chamber in the $(t-u)-$plane for $\mathscr{B}_t$. For a complex $E\in \mathscr{B}_t$ ($E$ is not a sheaf), it's unstable (resp. doesn't exist) for $u>>0$ (resp. $t>>0$ or $t<<0$). So if actual walls were bounded in both $s$ and $t$, then the ourtermost chamber ($s>>0,|t|>>0$) would be the Gieseker chamber. We show some partial results and expectations below.

Firstly we expect the following proposition to be true:

\begin{Prop}{\label{boundedness of walls}}
For any class $v=(0,0,m=ch_2>0,ch_3)\in K_{num}(\mathbb{P}^3)$, the actual walls are all from the bounded parts of the numerical walls (Type 1$\sim$3).  
\end{Prop}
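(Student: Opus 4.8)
The plan is to analyze each numerical wall of Types 1--3 by its two constituents: the bounded elliptic piece (and, for Type 2, the semicircle), both nested around the common center $C=(-\chi/m,0)$, versus the unbounded vertical piece. Since the semicircles and elliptic arcs are already bounded, the proposition reduces to the single assertion that \emph{no point of a vertical piece is an actual wall}. A vertical piece of a wall cut out by $0\to A\to E\to B\to 0$ with $ch_0(A)=r\neq 0$ lives where $t\to t_*:=-ch_1(A)/ch_0(A)-2$, the zero of $\chi''_t(A)=ch_1(A)+(t+2)r$, and where $u\to\infty$; indeed the wall equation $\lambda_{t,u}(A)=\lambda_{t,u}(E)=t+\chi/m$ rearranges to $u^2=2[\chi_t(A)-(t+\tfrac{\chi}{m})\chi'_t(A)]/\chi''_t(A)$, so $u\to\infty$ exactly as $\chi''_t(A)\to 0$. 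So it suffices to show actual walls are bounded in $u$.

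The bounded pieces are handled by the inequalities already in hand. For any destabilizing $A$ on the wall, semistability of $A$ and of $B=E/A$ forces the Bogomolov inequality $\overline{\Delta}(A)\ge 0$ and the support-property form $Q_t(A)\ge 0$. Imposing $u^2\ge 0$ in the displayed wall equation together with these two inequalities bounds the $t$-extent of the admissible elliptic and semicircular arcs by the radius of the outermost permissible semicircle around $C$; a direct estimate reproduces the inequality $|t-2-ch_3/m|\le m+2\sqrt{2m}$ quoted in the introduction. This step is a finite computation using only $\overline{\Delta}\ge0$ and $Q_t\ge0$ and introduces no new difficulty.

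For the vertical pieces I would argue by contradiction with the asymptotic results of Section~4.2. Suppose $(t_0,u_0)$ with $u_0$ large lies on an actual wall, witnessed by $0\to A\to E\to B\to 0$ in $\mathscr{B}_{t_0}$ with $A,B$ of the same phase as $E$; since $r\neq 0$, both $A$ and $B$ are genuine complexes. Proposition~\ref{stable sheaf} and Lemma~\ref{complex s} attach to each object of class $v$ an explicit $u$-threshold above which it is Gieseker-determined (if a sheaf) or already destabilized (if a complex), beyond which all wall behavior disappears. The crux is to promote these per-object thresholds to one bound $U(v)$ valid for every semistable $E$ of class $v$ and every admissible $A$. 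I would extract $U(v)$ from the support property: the admissible subobject classes are constrained by $Q_t(A)\ge 0$ together with $0\le \chi'_t(A)\le m$ (Lemma~\ref{Boundedness for sheaves}), and once the $t$-window is cut down using Lemma~\ref{bound for complexes}, only finitely many numerical types of $A$ survive; the maximum threshold over this finite set gives $U(v)$. For $u>U(v)$ the phase inequality $\lambda_{t,u}(A)<\lambda_{t,u}(E)$ is strict for every admissible $A$, so no point with $u>U(v)$ is an actual wall and the vertical pieces are excluded, completing the reduction.

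The main obstacle is exactly the uniformity in this vertical regime, which is why the statement is flagged as conjectural. The thresholds in Proposition~\ref{stable sheaf} and Lemma~\ref{complex s} degenerate as $\chi''_t(A)\to 0$: along a vertical piece one can have $\chi''_t(A)$ shrinking while $u$ grows so that $\tfrac{u^2}{2}\chi''_t(A)$ stays bounded, which is precisely the configuration the naive $u\to\infty$ limit cannot control. Ruling out a destabilizer $A$ whose $\chi''_t(A)$ keeps pace with $u^{-2}$ near the asymptote $t_*$ is the genuinely hard point; a complete argument would require either a sharpened Bogomolov-type inequality in $\mathscr{B}_t$ forbidding such near-degenerate semistable subobjects, or a boundedness statement for the entire family of admissible destabilizers that is uniform in $t$ across the asymptote.
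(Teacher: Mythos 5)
You should first be aware that the paper does not actually prove Proposition \ref{boundedness of walls}. It is introduced with the words ``Firstly we expect the following proposition to be true,'' the discussion that follows is entirely conditional (``If the proposition were true, then all actual walls would be bounded\dots''), and the reduction the author has in mind is itself recorded as a further unproved proposition, namely that a vertical wall defined by $0\to A\to E\to B\to 0$ cannot be an actual wall at $u=0$. The paper's intended route is therefore: Theorem \ref{Euler equals Gieseker} excludes vertical walls for $u\gg 0$, the unproved $u=0$ statement excludes them at the bottom of the branch, and the Bogomolov inequality is supposed to propagate the conclusion along the whole vertical piece. Your route is different: you try to exclude the vertical pieces at large $u$ directly, by promoting the per-object thresholds of Proposition \ref{stable sheaf} and Lemma \ref{complex s} to a single bound $U(v)$ uniform over all destabilizers $A$. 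Your rewriting of the wall equation as $u^2=2\bigl[\chi_t(A)-(t+\tfrac{\chi}{m})\chi'_t(A)\bigr]/\chi''_t(A)$ and your treatment of the bounded arcs are both consistent with what the paper does elsewhere.

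The gap is the one you name yourself, and it is genuine: along a vertical branch the wall condition permits $\chi''_t(A)\to 0$ as $t\to t_*$ with $u^2\chi''_t(A)$ bounded, which is exactly the regime the naive $u\to\infty$ limits of Section 4.2 cannot see, so the per-object thresholds do not uniformize. In addition, the intermediate step ``only finitely many numerical types of $A$ survive'' does not follow from the constraints you list: $Q_t(A)\ge 0$ together with $0\le\chi'_t(A)\le m$ and the $t$-window of Lemma \ref{bound for complexes} pins down only certain combinations of the Chern characters of $A$, and near the asymptote $\chi''_t(A)=ch_1^{-t-2}(A)\to 0$ the support-property form degenerates precisely where you need it, so $ch_0(A)$ and $ch_1(A)$ are not a priori confined to a finite set. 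So the proposal is not a proof; it is, however, an accurate map of the obstruction, and it is not in conflict with the paper, which likewise leaves the statement as a conjecture and proposes an alternative (equally incomplete) reduction through the $u=0$ endpoint of the vertical branch.
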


If the proposition were true, then all actual walls would be bounded. This is because the bounded parts of the same type don't intersect. So the outermost wall would at worse consist of three pieces of types 1,2 and 3 each. 
Using the fact $0<\chi'_t(A)<ch_2$, and the same trick in Prop \ref{bound for complexes}, we have that the actual wall can at most cover a region $\mathcal{R}$ whose range in $t$ is $ch_2+2\sqrt{2ch_2}$. On the other hand, the center $C=\frac{\chi}{m}$ is fixed for a fixed $v$. So we have that any wall satisfies $|t-\frac{\chi}{m}|\leq ch_2+2\sqrt{2ch_2}$, and $t>\frac{\chi}{m}+ ch_2+2\sqrt{2ch_2}$ gives the Gieseker chamber for Euler stability.  

Theorem \ref{Euler equals Gieseker} implies that any vertical numerical wall can't be an actual wall for $u>>0$ because there are no wall-crossings for $u>>0$. We expect the following claim which will imply Prop \ref{boundedness of walls} with the help of the Bogomolov inequality. 

\begin{Prop}
For $E\in \mathscr{B}_t$ whose class is $v$, any vertical wall defined by $0\to A\to E \to B\to 0$ can't be an actual wall at $u=0$
\end{Prop}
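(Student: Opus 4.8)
The plan is to work at $u=0$, where $Z_{t,0}=-\chi_t+i\chi'_t$ is exactly the double-tilt central charge $Z^{\mathscr{B}_t}_t$, so that $\sigma_{t,0}=\sigma^{\mathscr{B}_t}_t$ is a genuine Bridgeland stability condition and its semistable objects obey the support property with respect to the quadratic form $Q_t$. The one numerical input that drives everything is that the class $v=(0,0,m,ch_3)$ has $ch_0=ch_1=0$, so its tilt discriminant $\overline{\Delta}(E)=(H^2ch^{\beta}_1(E))^2-2H^3ch^{\beta}_0(E)\cdot Hch^{\beta}_2(E)$ vanishes identically, while $Q_t(E)=4ch_2^2=4m^2>0$. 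I first reduce to the case that $E$ is a sheaf: if $E$ is a complex, Lemma \ref{bound for complexes} confines it to a bounded $t$-interval and Lemma \ref{complex s} shows it is already unstable for large $u$, so complexes are controlled separately. For $E$ a sheaf of class $v$ it is torsion of rank $0$, hence lies in $\mathscr{T}_{\alpha,\beta}\subset Coh^{-t-2}$ (its tilt slope is $\nu_t(E)=+\infty$), and consequently every $\mathscr{B}_t$-subobject $A\hookrightarrow E$ has $\mathscr{H}^{-1}_{\beta}(A)=0$, i.e. $A\in Coh^{-t-2}$ and $\chi''_t(A)=ch^{-t-2}_1(A)\geq 0$.

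Now suppose a vertical wall, necessarily with $ch_0(A)=R\neq 0$ (this is precisely what distinguishes Types $1$ and $3$ from the semicircular Type $2$), were an actual wall at a point $(t_0,0)$. Then $A$ and $B=E/A$ are $\sigma_{t_0,0}$-semistable of the same phase as $E$, so $Z_{t_0,0}(A),Z_{t_0,0}(B),Z_{t_0,0}(E)$ lie on one ray and $Q_{t_0}(A),Q_{t_0}(B)\geq 0$. The geometry splits according to the sign of $\chi''_{t_0}(A)$. If $\chi''_{t_0}(A)<0$, this directly contradicts $A\in Coh^{-t_0-2}$ from the previous paragraph, so this branch carries no actual wall. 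The remaining, essential case is $\chi''_{t_0}(A)>0$, where $A$ is a bona fide subobject in $Coh^{-t_0-2}$ and the sign obstruction is unavailable.

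In that case I aim to show that the wall condition forces $\overline{\Delta}(A)=\overline{\Delta}(B)=0$. Granting this, set $\beta^{*}=ch_1(A)/ch_0(A)$, the twist at which $ch^{\beta^{*}}_1(A)=ch^{\beta^{*}}_1(B)=0$ (using $ch^{\beta^{*}}_1(E)=0$); then $\overline{\Delta}(A)=-2R\,ch^{\beta^{*}}_2(A)$ gives $ch^{\beta^{*}}_2(A)=0$, and likewise $ch^{\beta^{*}}_2(B)=0$, whereas $ch^{\beta^{*}}_2(B)=ch^{\beta^{*}}_2(E)-ch^{\beta^{*}}_2(A)=m-0=m$. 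Since $m>0$ this is a contradiction, and no such vertical wall exists. This is exactly where $ch_0(A)\neq 0$ is used: for Type $2$ walls ($ch_0(A)=0$) one has $\overline{\Delta}(A)=(H^2ch_1(A))^2>0$, the discriminants do not vanish, and those semicircular walls are genuine — so the argument must, and does, exploit $R\neq 0$.

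The \emph{main obstacle} is the single implication ``wall $\Rightarrow \overline{\Delta}(A)=\overline{\Delta}(B)=0$'' at $u=0$. For an honest tilt wall (equal tilt slope $\nu_t$) this is standard: the Bogomolov inequality gives $\overline{\Delta}(A),\overline{\Delta}(B)\geq 0$ and the cross-term positivity $\widetilde{\overline{\Delta}}(A,B)\geq 0$ yields $\overline{\Delta}(A)+\overline{\Delta}(B)\leq \overline{\Delta}(E)=0$. At $u=0$, however, ``same phase'' is measured by $Z_{t_0,0}$ rather than by $\nu_t$, and the cross-term positivity can fail in general (it does fail for Type $2$). The plan is to recover it from the support property: the $2$-plane $\langle v(A),v(E)\rangle$ meets $\ker Z_{t_0,0}$ in a line on which $Q_{t_0}<0$, so $Q_{t_0}$ has signature $(1,1)$ there and the reverse Cauchy--Schwarz inequality $\widetilde{Q}_{t_0}(A,E)^2\geq Q_{t_0}(A)Q_{t_0}(E)$ holds with $\widetilde{Q}_{t_0}(A,E)\geq 0$; combined with $Q_{t_0}(E)=4m^2$, the explicit polarization $\widetilde{Q}_{t_0}(A,E)=-mR\,p+(3Re-mC)q-3Ce+4mD$ (with $p=\tfrac13+(t_0+2)^2$, $q=-(t_0+2)$, $C=ch_1(A)$, $D=ch_2(A)$, $e=ch_3(E)$), and the wall equation $\lambda_{t_0}(A)=t_0+\chi/m$, I expect to pin $\overline{\Delta}(A)$ and $\overline{\Delta}(B)$ to $0$. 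Making this step unconditional — rather than transporting the configuration to the asymptotic tilt wall at $t=-ch_1(A)/ch_0(A)-2$ and invoking a monotonicity/connectedness argument toward the Gieseker chamber of Theorem \ref{Euler equals Gieseker} — is the crux of the proof.
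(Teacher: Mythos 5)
First, a point of comparison: the paper does not actually prove this Proposition. It is introduced with ``We expect the following claim \dots'' and no argument is supplied, so there is no proof of record to measure yours against; the statement is left as a conjecture supporting Proposition \ref{boundedness of walls}.

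Judged on its own terms, your proposal is a reasonable plan but not a proof, and the gap sits exactly where you flag it. The whole argument funnels through the implication ``actual vertical wall at $u=0$ $\Rightarrow$ $\overline{\Delta}(A)=\overline{\Delta}(B)=0$,'' and for that step you write only that you \emph{expect} the support-property form to pin the discriminants to zero. That expectation is not justified: the support property at $(t_0,0)$ is carried by $Q_{t_0}$, not by $\overline{\Delta}$, and the signature/reverse Cauchy--Schwarz argument on the $2$-plane $\langle v(A),v(E)\rangle$ yields only the inequalities $Q_{t_0}(A),Q_{t_0}(B)\ge 0$ and $Q_{t_0}(A)+Q_{t_0}(B)\le Q_{t_0}(E)=4m^2$. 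These are open conditions on $(R,C,D,ch_3(A))$ together with the single wall equation, and they do not force the exact vanishing $\overline{\Delta}(A)=0$. Your own Type~2 observation shows why no purely numerical consequence of ``$A$, $B$ semistable of the same phase at $(t_0,0)$'' can suffice; the vanishing would have to come from elsewhere, e.g.\ from deforming $A$ and $B$ along the unbounded branch of the wall into the regime $u\gg 0$, where Theorem \ref{Euler equals Gieseker} and genuine tilt-type Bogomolov inequalities become available --- the ``monotonicity/connectedness'' route you mention but likewise do not carry out. Separately, your reduction to the case of a sheaf is not closed: Lemma \ref{complex s} makes complexes unstable only for $u\gg 0$ and Lemma \ref{bound for complexes} merely confines them to a $t$-interval of length $m$, so a complex $E\in\mathscr{B}_{t_0}$ of class $v$ can still be $\sigma_{t_0,0}$-semistable with the vertical wall's $t_0$ inside that interval, and your argument that every subobject $A$ lies in $Coh^{-t_0-2}(\mathbb{P}^3)$ uses that $E$ is a torsion sheaf. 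The pieces worth keeping are the computations $\overline{\Delta}(E)=0$ and $Q_{t_0}(E)=4m^2$, and the clean endgame deriving a contradiction from $\overline{\Delta}(A)=\overline{\Delta}(B)=0$ via the twist $\beta^{*}=ch_1(A)/ch_0(A)$; but as written the crux is missing, and the Proposition remains unproved both in the paper and in your write-up.
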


\section{The duality results}

In this section, we show some duality properties of objects $E\in D^b(Coh(\mathbb{P}^3))$ in both $\mathscr{A}_t$ and $\mathscr{B}_t$. This section is motivated by work in \cite{MR2683291} for Gieseker stable sheaves and the results in \cite{MR3615584} for Bridgeland stable complexes in  $D^b(\mathbb{P}^2)$.  

\begin{Def}
Define the derived dual of $E\in D^b(Coh(\mathbb{P}^3))$ to be $E^D:=R\mathscr{H}om(E,\omega_{\mathbb{P}^3})[2]$.
\end{Def}

\begin{Def}
For a dimension vector $\underline{dim}=[a,b,c,d]$ ($a,b,c,d\in \mathbb{Z}_{\geq 0}$), its opposite vector is defined as $\underline{dim}^{op}:=[d,c,b,a]$.
\end{Def}

\begin{Prop}{\label{duality in At}}
For the stability condition $\sigma_t=(\mathscr{A}_t, Z_t=\chi'_t+i \cdot \chi_t)$, assume $t\notin \mathbb{Z}$. An object $E\in \mathscr{A}_t$ has its dimension vector $\underline{dim}(E)$ if and only if $E^D[1]\in \mathscr{A}_{-t}$ has its dimension vector $\underline{dim}(E^D[1])=\underline{dim}^{op}(E)$. Moreover, $E\in \mathscr{A}_t$ is $\sigma_t$- (semi)stable if and only if $E^D[1]\in \mathscr{A}_{-t}$ is $\sigma_{-t}$- (semi)stable.
\end{Prop}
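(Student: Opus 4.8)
The plan is to present the derived dual as a contravariant exact anti-autoequivalence of $D^b(\mathbb{P}^3)$ and to track its effect separately on the generating exceptional collection (for the dimension-vector claim) and on the central charge (for the stability claim).

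First I would set $G(-):=(-)^D[1]=R\mathscr{H}om(-,\omega_{\mathbb{P}^3})[3]$. Since $\omega_{\mathbb{P}^3}=\mathscr{O}_{\mathbb{P}^3}(-4)$ is a line bundle and $\mathbb{P}^3$ is smooth, $G$ is a contravariant triangulated functor, and biduality together with the shift bookkeeping gives $G^2\cong \mathrm{id}$. Writing $n:=\lceil t\rceil$ and using $R\mathscr{H}om(\mathscr{O}(a)[i],\mathscr{O}(-4))=\mathscr{O}(-4-a)[-i]$, a one-line computation gives $G(\mathscr{O}_{\mathbb{P}^3}(-n-i)[i])=\mathscr{O}_{\mathbb{P}^3}(n+i-4)[3-i]$. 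Because $t\notin \mathbb{Z}$ we have $\lceil -t\rceil=1-n$, so these images are exactly the generators of $\mathscr{A}_{-t}$ (Prop \ref{Euler heart from tilts}), with the index reversed by $i\mapsto 3-i$. A contravariant exact functor preserves extension closures, so $G(\mathscr{A}_t)\subseteq \mathscr{A}_{-t}$, and $G^2\cong\mathrm{id}$ upgrades this to $G(\mathscr{A}_t)=\mathscr{A}_{-t}$; the permutation of generators by $i\mapsto 3-i$ is precisely the statement $\underline{dim}(E^D[1])=\underline{dim}^{op}(E)$.

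Next I would compute the central charge. The key is the Hilbert-polynomial identity $\chi_s(E^D[1])=\chi_{-s}(E)$ as polynomials in $s$, which follows from Serre duality on $\mathbb{P}^3$: $\chi_s(E^D[1])=\chi\big((E(-s))^D[1]\big)=\chi(E(-s))=\chi_{-s}(E)$, where $\chi(F^D[1])=\chi(F)$ comes from $\mathrm{Ext}^i(F,\omega_{\mathbb{P}^3})\cong H^{3-i}(F)^{\ast}$ together with the shift. Differentiating and evaluating at $s=-t$ yields $\chi_{-t}(E^D[1])=\chi_t(E)$ and $\chi'_{-t}(E^D[1])=-\chi'_t(E)$, hence $Z_{-t}(E^D[1])=-\chi'_t(E)+i\,\chi_t(E)=-\overline{Z_t(E)}$. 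Here the hypothesis $t\notin\mathbb{Z}$ becomes essential: a direct check shows each generator has $\chi_t>0$ on $t\in(n-1,n)$, so every nonzero object of $\mathscr{A}_t$ has $\mathscr{I}m\,Z_t=\chi_t>0$ and phase strictly in $(0,1)$. Consequently $Z\mapsto -\overline{Z}$ maps the open upper half-plane to itself, sending phase $\phi$ to $1-\phi$ and negating the slope $\mu=-\mathscr{R}e/\mathscr{I}m$.

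Finally I would transfer (semi)stability. Since $G$ is contravariant and exact, a short exact sequence $0\to F\to E\to E/F\to 0$ in $\mathscr{A}_t$ is sent to $0\to (E/F)^D[1]\to E^D[1]\to F^D[1]\to 0$ in $\mathscr{A}_{-t}$, so the subobjects of $E^D[1]$ are exactly the $(E/F)^D[1]$ for quotients $E/F$ of $E$. Combining the slope negation $\mu_{\sigma_{-t}}(E^D[1])=-\mu_{\sigma_t}(E)$ (valid for every object) with the see-saw property, which lets one test semistability of $E$ on quotients, the defining inequalities $\mu_{\sigma_t}(F)\le\mu_{\sigma_t}(E)$ for all subobjects $F\subset E$ become exactly $\mu_{\sigma_{-t}}(S)\le\mu_{\sigma_{-t}}(E^D[1])$ for all subobjects $S\subset E^D[1]$; the strict inequalities give the stable case. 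I expect the main obstacle to be the bookkeeping of the third paragraph: getting the Serre-duality signs right and, more conceptually, recognizing that $t\notin\mathbb{Z}$ is exactly the condition forcing all phases into $(0,1)$, so that the reflection $Z\mapsto-\overline{Z}$ lands back inside $\mathscr{A}_{-t}$ rather than spilling into an adjacent slice — without it the dual of a phase-$1$ object would leave the heart.
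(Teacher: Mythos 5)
Your proposal is correct and follows essentially the same route as the paper: dualize the generating exceptional collection (equivalently, the termwise presentation of $E$ as a complex of line bundles) to see that $E^D[1]$ lands in $\mathscr{A}_{-t}$ with reversed dimension vector, establish the numerical identities $\chi_{-t}(E^D[1])=\chi_t(E)$ and $\chi'_{-t}(E^D[1])=-\chi'_t(E)$, and transfer (semi)stability via the subobject/quotient swap together with the slope negation. Your Serre-duality derivation of the Hilbert-polynomial identity and your explicit observation that $t\notin\mathbb{Z}$ forces $\lceil -t\rceil=1-\lceil t\rceil$ and keeps all phases in $(0,1)$ are just cleaner spellings of what the paper calls a ``direct computation'' and defers to its Remark on integer $t$.
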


\begin{proof}
Let $n:=\lceil t\rceil$. For $E\in \mathscr{A}_t$, it is quasi-isomorphic to a complex of vector bundles as: 

$$E\overset{qiso}{\cong}\left[\mathscr{O}^{a_3}_{\mathbb{P}^3}(-n-3)\to \mathscr{O}^{a_2}_{\mathbb{P}^3}(-n-2)\to \mathscr{O}^{a_1}_{\mathbb{P}^3}(-n-1)\to \mathscr{O}^{a_0}_{\mathbb{P}^3}(-n)\right]$$ where $a_i\in \mathbb{Z}_{\geq 0}$. A direct computation from the definition shows that  

$$E^D[1]\overset{qiso}{\cong}\left[\mathscr{O}^{a_0}_{\mathbb{P}^3}(n-4)\to \mathscr{O}^{a_1}_{\mathbb{P}^3}(n-3)\to \mathscr{O}^{a_2}_{\mathbb{P}^3}(n-2)\to \mathscr{O}^{a_3}_{\mathbb{P}^3}(n-1)\right]$$
which implies that $E^D[1]\in \mathscr{A}_{-t}$ with its dimension vector $\underline{dim}(E^D[1])=[a_0,a_1,a_2,a_3]=[a_3,a_2,a_1,a_0]^{op}$.

For the stability, a direct computation shows that $$\chi_t(E)=\chi_{-t}(E^D[1]), \quad \chi'_t(E)=-\chi'_{-t}(E^D[1])$$ where the derivative in $\chi'_{-t}(E^D[1])$ is with respect to the parameter "$-t$".

A short exact sequence $0\to A \to E \to B \to 0$ in $\mathscr{A}_t$ is essentially a sequence of complexes of vector spaces. Taking the dual of it, we have a dual sequence in $\mathscr{A}_{-t}$ as $0\to B^D[1] \to E^D[1] \to A^D[1] \to 0$. Let $\lambda_t:=-\frac{\chi'_t}{\chi_t}$ be the slope function defined by the central charge. Then the numerical results above imply that for any object $A\in \mathscr{A}_t$, $\lambda_t(A)=-\lambda_{-t}(A^D[1])$. 

Now we have the following equivalence: $A\hookrightarrow E$ in $\mathscr{A}_t$ with $\lambda_t(A)<(\leq)\lambda_t(E)$ is equivalent to $E^D[1]\twoheadrightarrow A^D[1]$ in $\mathscr{A}_{-t}$ with $\lambda_{-t}(E^D[1])<(\leq)\lambda_{-t}(A^D[1])$. So this implies that $E\in \mathscr{A}_t$ is (semi)stable if and only if $E^D[1]\in \mathscr{A}_{-t}$ is (semi)stable, and this proves the claim. 
\end{proof}

\begin{Rem}{\label{heart for t not integer}}
For $t\in \mathbb{R}\backslash\mathbb{Z}$, it can be easily checked that the generators $\mathscr{O}_{\mathbb{P}^3}(-\lceil t\rceil-i)[i]$ ($i=0,1,2,3$) are all mapped to the strict upper half plane by the central charge $Z_t:=\chi'_t+i\cdot \chi_t$, i.e. $\chi_t(\mathscr{O}_{\mathbb{P}^3}(-\lceil t\rceil-i)[i])>0$. In particular, there is no stable object $E\in \mathscr{A}_t$ with phase $1$, equivalently, $\mathscr{A}_t\subset \mathscr{P}_t(0,1)$. So under the duality, $(\mathscr{A}_t, Z_t)$ is sent to $(\mathscr{A}_{-t}, Z_{-t})$ as a stability condition with $\mathscr{A}_{-t}\subset \mathscr{P}_{-t}(0,1)$. Indeed, prop \ref{duality in At} also works for $t\in \mathbb{Z}$, but we need to modify the heart a bit. We will show it in Remark \ref{duality in At when t integer}. 
\end{Rem}

\begin{Rem}
The duality result in Prop \ref{duality in At} and a similar proof work for all $\mathbb{P}^n$ ($n\in \mathbb{Z}_{\geq 0}$).
\end{Rem}

\begin{Cor}{\label{slices for duality}}
For any $t\in \mathbb{R}\backslash \mathbb{Z}$, let $\mathscr{P}_t(\phi)$ ($\phi\in \mathbb{R}$) be the slicing such that the heart $\mathscr{A}_t$ is given by $\mathscr{P}_t(0,1]$. We have $E\in \mathscr{P}_t(\phi)$ if and only if $E^D[1]\in \mathscr{P}_{-t}(1-\phi)$ for $\mathscr{A}_{-t}$.
\end{Cor}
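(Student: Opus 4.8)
The plan is to derive the corollary directly from Proposition~\ref{duality in At} by upgrading its stability statement into a statement about phases, and then to propagate the resulting phase identity from the ``fundamental domain'' $\phi\in(0,1)$ to all of $\mathbb{R}$ using the shift axiom of a slicing. Throughout I write $\Phi(E):=E^D[1]$ for the contravariant duality functor, so that the task is to show $\Phi$ maps $\mathscr{P}_t(\phi)$ into $\mathscr{P}_{-t}(1-\phi)$ and conversely.

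First I would record how $\Phi$ transforms phases, which is purely a central-charge computation. With $Z_t=\chi'_t+i\cdot\chi_t$, the proof of Proposition~\ref{duality in At} already supplies the two numerical identities $\chi_t(E)=\chi_{-t}(E^D[1])$ and $\chi'_t(E)=-\chi'_{-t}(E^D[1])$. Substituting these gives
\[
Z_{-t}(E^D[1])=\chi'_{-t}(E^D[1])+i\cdot\chi_{-t}(E^D[1])=-\chi'_t(E)+i\cdot\chi_t(E).
\]
If $Z_t(E)=r\,e^{i\pi\phi}$ with $r>0$, then $\chi'_t(E)=r\cos(\pi\phi)$ and $\chi_t(E)=r\sin(\pi\phi)$, whence $Z_{-t}(E^D[1])=r\bigl(-\cos(\pi\phi)+i\sin(\pi\phi)\bigr)=r\,e^{i\pi(1-\phi)}$. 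Thus at the level of central charges $\Phi$ sends an object of phase $\phi$ to one of phase $1-\phi$; this is the computational core of the argument and is entirely routine.

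Next I would settle the case $\phi\in(0,1)$. By Remark~\ref{heart for t not integer}, for $t\notin\mathbb{Z}$ every nonzero $\sigma_t$-semistable object of $\mathscr{A}_t$ has phase strictly in $(0,1)$, so for such $\phi$ the slice $\mathscr{P}_t(\phi)$ is exactly the additive category of (zero and) $\sigma_t$-semistable objects of phase $\phi$, all of which lie in the heart $\mathscr{A}_t$. Given such an $E$, Proposition~\ref{duality in At} shows $E^D[1]$ is $\sigma_{-t}$-semistable, and the phase computation above forces its phase to be $1-\phi$; hence $E^D[1]\in\mathscr{P}_{-t}(1-\phi)$. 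The converse is identical upon applying the forward direction to $E^D[1]\in\mathscr{A}_{-t}$ at the parameter $-t$ and using the canonical biduality $(E^D[1])^D[1]\cong E$, which recovers $E\in\mathscr{P}_t(\phi)$. This establishes the equivalence for all $\phi\in(0,1)$.

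Finally I would extend to arbitrary $\phi\in\mathbb{R}$ via the shift axiom $\mathscr{P}_t(\phi+1)=\mathscr{P}_t(\phi)[1]$. Since $R\mathscr{H}om(-,\omega_{\mathbb{P}^3})$ is contravariant, a direct computation yields $\Phi(E[1])=(E[1])^D[1]=\Phi(E)[-1]$, and more generally $\Phi(E[k])=\Phi(E)[-k]$. Writing an arbitrary $\phi=\phi_0+k$ with $\phi_0\in(0,1)$ and $k\in\mathbb{Z}$, any $E\in\mathscr{P}_t(\phi)$ has the form $E'[k]$ with $E'\in\mathscr{P}_t(\phi_0)$, so $\Phi(E)=\Phi(E')[-k]\in\mathscr{P}_{-t}((1-\phi_0)-k)=\mathscr{P}_{-t}(1-\phi)$, as required; the integer phases $\phi\in\mathbb{Z}$ are vacuous, since both $\mathscr{P}_t(\phi)$ and $\mathscr{P}_{-t}(1-\phi)$ contain only the zero object by Remark~\ref{heart for t not integer}. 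The only step demanding genuine care is the sign bookkeeping in $\Phi(E[1])=\Phi(E)[-1]$ and the matching of this $[-1]$ against the slicing shift; once that compatibility is pinned down, the rest of the argument is formal.
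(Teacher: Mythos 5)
Your proposal is correct and follows essentially the same route as the paper: the central-charge identity $Z_{-t}(E^D[1])=-\chi'_t(E)+i\chi_t(E)$ forces the phase change $\phi\mapsto 1-\phi$ on $(0,1)$, and the general case follows by shifting. You simply spell out the details the paper leaves implicit (the biduality step for the converse, the sign bookkeeping $\Phi(E[k])=\Phi(E)[-k]$, and the vacuous integer phases), all of which check out.
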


\begin{proof}
It follows from the numerical fact in Prop \ref{duality in At} that for $E\in \mathscr{A}_t$, if $Z_t(E)=Re+Im$, then $Z_{-t}(E^D[1])=-Re+Im$. So the phase $\phi$ ($\phi\in (0,1)$ from Remark \ref{heart for t not integer}) changes from $\phi$ to $1-\phi$ in the corresponding hearts. $\phi$ can be extended to all the real numbers by shifting $\mathscr{A}_t$ and $\mathscr{A}_{-t}$.
\end{proof}

\begin{Rem}{\label{duality in At when t integer}}
Proposition \ref{duality in At} and Corollary \ref{slices for duality} work for $t\in \mathbb{Z}$ as well. If $t\in \mathbb{Z}$, then the stability condition $(\mathscr{A}_t, Z_t)$ is supposed to be sent to $(\mathscr{A}_{1-t}, Z_{-t})$ by the duality. The pair $(\mathscr{A}_{1-t}, Z_{-t})$ is not a stability condition because the stable (simple) objects $\mathscr{O}_{\mathbb{P}^3}(-t-i)[i] \in \mathscr{A}_t$ ($i=1,2,3$) all have phase $1$, and duality will send them to phase $0$ in $\mathscr{A}_{-t}$ which is not in the heart.

This can be fixed by slightly tilting the upper half plane. The heart $\mathscr{A}_t$ ($t\in \mathbb{Z}$) is indeed a strict subset of $\mathscr{P}(0,1]$, and more precisely, $\mathscr{A}_t=\mathscr{P}(\phi_1,1]$ where $\phi_1=\tan^{-1}(\frac{6}{11})$. This is because $Z_t$ sends the generators $\mathscr{O}_{\mathbb{P}^3}(-t)$ to $(\frac{11}{6},1)$ and  $\mathscr{O}_{\mathbb{P}^3}(-t-i)[i]$ ($i=1,2,3$) to $(-\frac{1}{3},0)$ or $(-\frac{1}{6},0)$. So we just modify the heart to be $\mathscr{P}(\phi,\phi+1]$ as shown in Figure \ref{tilt the heart} ($\phi := \frac{1}{2}\phi_1$, and take $t=0$ as an example). The new heart under duality is $\mathscr{P}_{-t}(-\phi, 1-\phi]$ and this fixes the issue. 
\end{Rem}

\begin{figure}[ht]    

\begin{tikzpicture}[scale=1.2]
\begin{axis}[ axis lines=middle
,xmin=-4,xmax=4,ymin=-4,ymax=4
]
\addplot [domain=0:3, thick]{6/11*x};
\addplot [domain=-3:0, thick]{0};
\addplot[black, dashed] {6/22*x};
\filldraw [black] (11/6,1) circle (2pt);
\node[color=black] at (11/6,1.5) {$\mathscr{O}_{\mathbb{P}^3}$};
\filldraw [black] (-1/3,0) circle (2pt);
\filldraw [black] (-1/6,0) circle (2pt);
\node[color=black] at (-1/3,-0.5) {$\mathscr{O}_{\mathbb{P}^3}(-1)[1]$};
\node[color=black] at (-1/3,-1) {$\mathscr{O}_{\mathbb{P}^3}(-2)[2]$};
\node[color=black] at (-1/3,-1.5) {$\mathscr{O}_{\mathbb{P}^3}(-3)[3]$};
\node[color=black] at (3.5,0.5) {$\phi$};
\node[color=black] at (-3.5,-1.5) {$\phi+1$};
\end{axis}
\end{tikzpicture}
\caption{}
    \label{tilt the heart}
\end{figure}

From Corollary \ref{slices for duality}, we have a duality result for the category $\mathscr{B}_t$.

\begin{Cor}{\label{duality in Bt}}
For the stability condition $\sigma^{\mathscr{B}_t}_t=(\mathscr{B}_t, Z_t=-\chi_t+i\cdot \chi'_t)$, an object $E\subset \mathscr{P}_{\mathscr{B}_t}(0,1)\subset  \mathscr{B}_t$ is (semi)stable if and only if $E^D\subset \mathscr{P}_{\mathscr{B}_{-t}}(0,1) \subset \mathscr{B}_{-t} $ is (semi)stable.
\end{Cor}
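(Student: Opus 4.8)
The plan is to deduce the statement from the $\mathscr{A}_t$-duality already established in Corollary~\ref{slices for duality}, transporting it across the third tilt that relates $\mathscr{B}_t$ and $\mathscr{A}_t$. The essential input is the comparison of the two central charges: since $Z^{\mathscr{A}}_t=\chi'_t+i\chi_t$ and $Z^{\mathscr{B}}_t=-\chi_t+i\chi'_t$ satisfy $Z^{\mathscr{A}}_t=e^{-i\pi/2}Z^{\mathscr{B}}_t$, and since Proposition~\ref{Euler heart from tilts} exhibits $\mathscr{A}_t$ as the tilt of $\mathscr{B}_t$ at the slope $\lambda_t=\chi_t/\chi'_t=0$, i.e. at $\mathscr{B}_t$-phase $\tfrac12$, the first thing I would do is record that $\sigma_t$ is the image of $\sigma^{\mathscr{B}_t}_t$ under the standard $\mathbb{C}$-rotation that multiplies the central charge by $e^{-i\pi/2}$. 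Writing $\mathscr{P}_t$ for the slicing of $\sigma_t$ (as in Corollary~\ref{slices for duality}) and $\mathscr{P}_{\mathscr{B}_t}$ for that of $\sigma^{\mathscr{B}_t}_t$, this is confirmed by matching the two defining data: the rotated central charge is exactly $Z^{\mathscr{A}}_t$, and the rotated heart $\mathscr{P}_{\mathscr{B}_t}((\tfrac12,\tfrac32])$ is exactly what the third tilt produces, namely $\mathscr{A}_t$. Since a stability condition is determined by its heart together with its central charge, the two slicings satisfy the dictionary $\mathscr{P}_t(\phi)=\mathscr{P}_{\mathscr{B}_t}(\phi+\tfrac12)$ for every $\phi\in\mathbb{R}$, and likewise with $t$ replaced by $-t$.

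With this dictionary the proof becomes bookkeeping of phases. Given a semistable $E\in\mathscr{P}_{\mathscr{B}_t}(0,1)$, say $E\in\mathscr{P}_{\mathscr{B}_t}(\psi)$ with $\psi\in(0,1)$, the dictionary places $E$ in $\mathscr{P}_t(\psi-\tfrac12)$. Corollary~\ref{slices for duality} then gives $E^D[1]\in\mathscr{P}_{-t}(\tfrac32-\psi)$, and translating back through the dictionary for $-t$ yields $E^D[1]\in\mathscr{P}_{\mathscr{B}_{-t}}(2-\psi)=\mathscr{P}_{\mathscr{B}_{-t}}(1-\psi)[1]$, that is, $E^D\in\mathscr{P}_{\mathscr{B}_{-t}}(1-\psi)$. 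Since $\psi\in(0,1)$ forces $1-\psi\in(0,1)$, this exhibits $E^D$ as a semistable object of $\mathscr{P}_{\mathscr{B}_{-t}}(0,1)$. The converse follows by symmetry once one records that $(-)^D$ is an involution, $(E^D)^D\cong E$, which comes from biduality against the line bundle $\omega_{\mathbb{P}^3}$ together with the cancellation of the two shifts by $[2]$; applying the forward implication to $E^D\in\mathscr{P}_{\mathscr{B}_{-t}}(0,1)$ then returns $E$. The stable versus semistable distinction is preserved because Proposition~\ref{duality in At} carries sub-objects to quotients bijectively and strictly.

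The one point requiring genuine care, and the main obstacle, is the first step: justifying that the third tilt realizes $\sigma_t$ exactly as the half-phase rotation of $\sigma^{\mathscr{B}_t}_t$, rather than merely as some stability condition with a tilted heart. Concretely, I must verify that tilting at $\lambda_t=0$ corresponds precisely to the $\mathscr{B}_t$-phase $\tfrac12$, so that the torsion pair $(\mathscr{T}',\mathscr{F}')$ of the third tilt equals $\big(\mathscr{P}_{\mathscr{B}_t}((\tfrac12,1]),\,\mathscr{P}_{\mathscr{B}_t}((0,\tfrac12])\big)$ and hence $\mathscr{A}_t=\mathscr{P}_{\mathscr{B}_t}((\tfrac12,\tfrac32])$; this rests on the slope $\lambda_t$ being monotone in the $\mathscr{B}_t$-phase and vanishing exactly at phase $\tfrac12$. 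Once the half-phase shift is pinned down the rest is forced. Finally, restricting to the open interval $(0,1)$ is what lets the argument run uniformly in $t$: it keeps both $E$ and $E^D$ away from the boundary phases whose phase-$1$ objects forced the modified heart in Remark~\ref{duality in At when t integer}, so the statement holds for all $t\in\mathbb{R}$, the integer case being covered by the extension of Corollary~\ref{slices for duality} recorded there.
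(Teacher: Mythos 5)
Your proposal is correct and follows essentially the same route as the paper: the paper's proof simply records that $\mathscr{B}_t=\mathscr{P}_t((-\tfrac12,\tfrac12])$ (equivalently $\mathscr{A}_t=\mathscr{P}_{\mathscr{B}_t}((\tfrac12,\tfrac32])$, the half-phase rotation you identify) and then invokes Proposition \ref{duality in At} and Corollary \ref{slices for duality}. Your write-up just makes explicit the phase bookkeeping $\psi\mapsto 1-\psi$ that the paper leaves implicit.
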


\begin{proof}
Let $\mathscr{P}(0,1]=\mathscr{A}_t$ in terms of slicing of $\mathscr{A}_t$, then $\mathscr{B}_t=\mathscr{P}(-\frac{1}{2},\frac{1}{2}]$ by definition. The claim then follows from Proposition \ref{duality in At} and Remark \ref{slices for duality}. 
\end{proof}

Finally, we show the duality result for the modified stabilty condition $\sigma_{t,u}=(\mathscr{B}_t, Z_{t,u}=-\chi_t+\frac{u^2}{2}\chi''_t+i\cdot \chi'_t)$ on $\mathscr{B}_t$. Before stating the corollary, we compute the quiver region with respect to the stability condtion $\sigma_{t,u}$. The quiver region for $\mathbb{P}^2$ was introduced in \cite{Arcara_2013} section $7$. The situation for $\mathbb{P}^3$ is analogous. For the stability condition $\sigma_{t,u}=(\mathscr{B}_t, Z_{t,u})$ on $\mathscr{B}_t$, we make a tilt with respect to the slope 

$$\lambda_{t,u}:=-\frac{\mathscr{R}e(Z_{t,u})}{\mathscr{I}m(Z_{t,u})}=\frac{\chi_t-\frac{u^2}{2}\chi''_t}{\chi'_t}$$ 
and we have the following stability condition $\sigma'_{t,u} = (\mathscr{A}_{t,u}, Z'_{t,u}=\chi'_t+ i \cdot(\chi_t-\frac{u^2}{2}\chi''_t))$

The quiver region, detoted by $\mathscr{Q}$, is a region $\mathscr{Q}\subset (t-u)-$plane containing all points $(t,u)$ such that the following exceptional collection is contained in $\mathscr{A}_{t,u}$.

$$\mathscr{O}_{\mathbb{P}^3}(-3-n)[3], \mathscr{O}_{\mathbb{P}^3}(-2-n)[2], \mathscr{O}_{\mathbb{P}^3}(-1-n)[1], \mathscr{O}_{\mathbb{P}^3}(-n),  \quad (n:= \lceil t \rceil)$$

Following the proof of Prop \ref{Euler heart from tilts}, it's straightforward to see the quiver region for $(-1,0]\times \mathbb{R}_{\geq 0}$ in the $(t-u)$-plane is the region below these two hyperbolas $(t+2)^2-2u^2-1=0$ and $(t-1)^2-2u^2-1=0$ (including the $t-$axis), defined by $\lambda_{t,u}(\mathscr{O}_{\mathbb{P}^3})=0$ and $\lambda_{t,u}(\mathscr{O}_{\mathbb{P}^3}(-3))=0$. Since the quiver regions are periodic (the period is $t=1$), the entire requiver region is as shown in Figure \ref{quiver region}.

\begin{figure}[ht]
\begin{tikzpicture}
\begin{axis}[axis lines=middle ,xmin=-2.2,xmax=1.2,ymin=-1.5,ymax=1.5]
\draw[line width=1pt,fill=green!15,dashed] (0,0) to[out=90,in=215] (0.5,0.7) to[out=325,in=90] (1,0);
\draw[line width=1pt,fill=green!15,dashed] (-1,0) to[out=90,in=215] (-0.5,0.7) to[out=325,in=90] (0,0);
\draw[line width=1pt,fill=green!15,dashed] (-2,0) to[out=90,in=215] (-1.5,0.7) to[out=325,in=90] (-1,0);
\node[color=black] at (-2.1,0.3) {$\cdots$};
\node[color=black] at (1.1,0.3) {$\cdots$};
\end{axis}
\end{tikzpicture}
\caption{The quiver region}
\label{quiver region}
\end{figure}

\begin{Cor}{\label{duality in Bt with s}}
For the stability condition $\sigma_{t,u}=(\mathscr{B}_t, Z_{t,u}=-\chi_t+\frac{u^2}{2}\chi''_t+i\cdot \chi'_t)$ such that $(t,u)\in \mathscr{Q}$, $E\in \mathscr{P}_{\mathscr{B}_t}(0,1)\subset  \mathscr{B}_t$ is $\sigma_{t,u}-$(semi)stable if and only if $E^D\in \mathscr{P}_{\mathscr{B}_{-t}}(0,1) \subset \mathscr{B}_{-t} $ is $\sigma_{-t,u}-$(semi)stable.
\end{Cor}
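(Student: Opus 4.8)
The plan is to reduce everything to the two numerical identities already recorded in Proposition \ref{duality in At}, combined with the order-reversing, slope-negating behaviour of the duality functor $D:=R\mathscr{H}om(-,\omega_{\mathbb{P}^3})[2]$, exactly as in the proofs of Proposition \ref{duality in At} and Corollary \ref{duality in Bt}. The only genuinely new ingredient is to keep track of the second-derivative term $\tfrac{u^2}{2}\chi''_t$ appearing in $Z_{t,u}$.

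First I would upgrade the numerical dictionary. From Proposition \ref{duality in At} we already have $\chi_t(E)=\chi_{-t}(E^D[1])$ and $\chi'_t(E)=-\chi'_{-t}(E^D[1])$. Differentiating the first identity once more in $t$ and applying the chain rule (the argument of $E^D[1]$ is $-t$) gives $\chi''_t(E)=\chi''_{-t}(E^D[1])$. Passing from $E^D[1]$ to $E^D=E^D[1][-1]$ negates every Euler-characteristic function, so
\[
\chi_{-t}(E^D)=-\chi_t(E),\qquad \chi'_{-t}(E^D)=\chi'_t(E),\qquad \chi''_{-t}(E^D)=-\chi''_t(E).
\]
Substituting into $Z_{-t,u}(E^D)=-\chi_{-t}(E^D)+\tfrac{u^2}{2}\chi''_{-t}(E^D)+i\,\chi'_{-t}(E^D)$ yields $Z_{-t,u}(E^D)=-\overline{Z_{t,u}(E)}$: the imaginary part is preserved and the real part reversed. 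Consequently the Bridgeland slope satisfies $\lambda_{-t,u}(E^D)=-\lambda_{t,u}(E)$, the same sign reversal that drove Proposition \ref{duality in At}.

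Next I would put the functor in place. The derived dual $D$ is an exact contravariant involution on $D^b(\mathbb{P}^3)$, so it turns a short exact sequence $0\to A\to E\to B\to 0$ in a heart into $0\to B^D\to E^D\to A^D\to 0$, provided all three duals lie in the target heart. That the target is $\mathscr{B}_{-t}$ is precisely the content of Corollary \ref{duality in Bt}, which is a statement about the heart and hence independent of $u$: for $E\in\mathscr{P}_{\mathscr{B}_t}(0,1)$ one has $E^D\in\mathscr{P}_{\mathscr{B}_{-t}}(0,1)\subset\mathscr{B}_{-t}$. The hypothesis $(t,u)\in\mathscr{Q}$ enters here: the quiver region of Figure \ref{quiver region} is symmetric under $t\mapsto -t$, so $(-t,u)\in\mathscr{Q}$ as well, whence $\sigma_{-t,u}$ is a genuine stability condition admitting the quiver description of Proposition \ref{Euler heart from tilts} on both sides, and ``$E^D$ is $\sigma_{-t,u}$-(semi)stable'' is meaningful.

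Finally I would run the destabilisation argument of Proposition \ref{duality in At} essentially verbatim. Restricting to the open slice $\mathscr{P}_{\mathscr{B}_t}(0,1)$ guarantees that a $\sigma_{t,u}$-semistable $E$ has all its subobjects of phase $<1$, so $D$ carries an inclusion $A\hookrightarrow E$ in $\mathscr{B}_t$ to a surjection $E^D\twoheadrightarrow A^D$ in $\mathscr{B}_{-t}$; the slope identity $\lambda_{-t,u}(A^D)=-\lambda_{t,u}(A)$ converts $\lambda_{t,u}(A)\,(\geq)>\,\lambda_{t,u}(E)$ into $\lambda_{-t,u}(A^D)\,(\leq)<\,\lambda_{-t,u}(E^D)$, so destabilising subobjects on one side become destabilising quotients on the other. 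The involutivity $(E^D)^D\cong E$ supplies the converse direction, closing the equivalence for both stability and semistability. I expect the main obstacle to be the bookkeeping at the ends of the phase interval — making sure the sub/quotients produced by $D$ genuinely land inside $\mathscr{B}_{-t}$ rather than straddling phase $0$ or $1$ — which is exactly why the statement is phrased for $\mathscr{P}_{\mathscr{B}_t}(0,1)$ and why the $t\mapsto -t$ symmetry of $\mathscr{Q}$ must be checked rather than assumed.
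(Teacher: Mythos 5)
Your proposal is correct and follows essentially the same route as the paper: establish the sign-flip identities for $\chi_{-t}(E^D)$, $\chi'_{-t}(E^D)$, $\chi''_{-t}(E^D)$ to get $\lambda_{-t,u}(E^D)=-\lambda_{t,u}(E)$, invoke Corollary \ref{duality in Bt} for $E^D\in\mathscr{P}_{\mathscr{B}_{-t}}(0,1)$, note the $t\mapsto -t$ symmetry of $\mathscr{Q}$, and transfer the destabilization argument of Proposition \ref{duality in At}. Your version of the numerical dictionary is in fact stated more carefully than the paper's (which writes $\chi_t(E)=\chi_t(E^D)$ where $\chi_{-t}(E^D)=-\chi_t(E)$ is meant), but the substance is identical.
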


\begin{proof}
The reason that $E^D\in \mathscr{B}_{-t} $ is the same with Cor \ref{duality in Bt} or Remark \ref{slices for duality}. For stability, observe that $\chi_t(E)=\chi_t(E^D)$, $\chi''_t(E)=\chi''_t(E^D)$, and $\chi'_t(E)=-\chi'_t(E^D)$. Let $\lambda_{t,u}$ be the slope $\displaystyle \lambda_{t,u}:=\frac{\chi_t-\frac{u^2}{2}\chi''_t}{\chi'_t}$, then $\lambda_{t,u}(E)=-\lambda_{t,u}(E^D)$. It's obvious that $(t,u)\in \mathscr{Q}$ if and only if $(-t,u) \in \mathscr{Q}$. So the claim follows in the same way as the proof in Prop \ref{duality in At}.
\end{proof}

\section{Walls for the class $3t\pm1$}

In this section, fix $v=(0,0,3,-5)$ and its dual class $v^{\vee}:=(0,0,3,-4)$. Their Hilbert polynomials are $P_v(t)=3t+1$ and $P_{v^{\vee}}(t)=3t+2$ respectively. We give two potential walls for $v$ in the $(t,u)$ plane in section $6.1$, and then in section $6.2$, we prove that they are actual walls at $u=0$. Finally, in section $6.3$, we use the duality results to study walls for the dual class $v^{\vee}$.

\subsection{Potential Walls in the "$(t,u)$" plane.}
    
Let $E\in \mathscr{B}_t$ be a sheaf whose class is $v$. In \cite{Freiermuth_2004}, the Gieseker moduli space $\mathscr{M}^{3t+1}_{\mathbb{P}^3}$ consists of two components.  The general sheaves from those component are $\mathscr{O}_{C}$ and $L_{C_E}$, where $C$ is the twisted cubic and $L_{C_E}$ is a degree $1$ line bundle on the plane cubic curve $C_E$. 

For the stability condition $\sigma_t=(\mathscr{B}_t, Z_{t,u})$, there are two walls for those Gieseker stable sheaves which are defined by sheaves $\mathscr{O}_{\mathbb{P}^3}$ and $\mathscr{O}_{\Lambda}$ as follows ($\Lambda\subset\mathbb{P}^3$ is a plane)

$$W_1: \quad  0\to \mathscr{O}_{\mathbb{P}^3} \to \mathscr{O}_C \to Q[1] \to 0 $$

$$W_2: \quad  0\to \mathscr{O}_{\Lambda} \to L_{C_E} \to \mathscr{F}_1\to 0 $$

In the first sequence, the quotient object $Q[1]$ has two possibilities. It can be either the shifted ideal sheaf of a twisted cubic curve $\mathscr{I}_C[1]$ or a shifted sheaf $\mathscr{F}[1]$ where $\mathscr{F}$ contains torsion. The sheaf $\mathscr{F}$ fits into the short exact sequence $0\to \mathscr{O}_{\Lambda}(-3)\to \mathscr{F}\to \mathscr{I}_P(-1)\to 0 $ ($P\subset \Lambda$ is a point in the plane). (\cite{Freiermuth_2004}, \cite{MR3803142}) 

The second sequence is indeed $0\to \mathscr{O}_{\Lambda}(-3) \to \mathscr{O}_{\Lambda} \to L_{C_E} \to \mathbb{C}_P\to 0$, where $P$ is a point on ${C_E}$. The first object  $\mathscr{O}_{\Lambda}(-3)$ need to be shifted to the front because of the definition of $\mathscr{B}_t$ and the position of the wall. The object $\mathscr{F}_1$ is then a complex extended by $\mathbb{C}_P$ and $\mathscr{O}_{\Lambda}(-3)[1]$. So $\mathscr{F}_1$ fits into a short exact sequence: $0\to \mathscr{O}_{\Lambda}(-3)[1] \to \mathscr{F}_1 \to \mathbb{C}_P \to 0$ in $\mathscr{B}_t$. The numerical walls are shown in Figure \ref{walls for 3t+1} (the bounded parts). 

\begin{figure}[ht]
\begin{tikzpicture}
\begin{axis}[axis lines=middle ,xmin=-3,xmax=1,ymin=-1.5,ymax=2]
\draw[line width=1pt, color=red] (-1.324,0) to[out=90,in=180] (-0.551,0.795) to[out=0,in=90] (0.349,0);
\draw[line width=1pt, color=red] (-1.324,0) to[out=270,in=180] (-0.551,-0.795) to[out=0,in=270] (0.349,0);
\draw[line width=1pt, color=red] (-2.313,2) to[out=265,in=90] (-2.525,0) to[out=270,in=275] (-2.131,-2);
\draw[line width=1pt, color=blue] (-1/3,0)circle(1.054);
\node[color=black] at (-0.8,0.4) {$W_1$};
\node[color=black] at (-1.3,0.9) {$W_2$};
\end{axis}
\end{tikzpicture}
\caption{Walls for the class $v=(0,0,3,-5)$}
\label{walls for 3t+1}
\end{figure}

\subsection{Actual walls in $\mathscr{A}_1$}

We will show that those potential walls are actual walls when $u=0$. Figure \ref{walls for 3t+1} shows that the right endpoints of those walls both land in $t\in (0,1]$ which corresponds to the category $\mathscr{A}_1$. The sheaves $\mathscr{O}_{C}$ and $L_{C_E}$ are both in $\mathscr{A}_1$ since they are $1-$regular (Prop $1.8.8$ \cite{MR2095471}). We will prove that the objects $\mathscr{O}_{\mathbb{P}^3}$, $\mathscr{O}_{\Lambda}$, $\mathscr{F}_1$, $Q[1]$ which define $W_1$ and $W_2$ are stable in $\mathscr{A}_1$, and $W_1$ and $W_2$ are the only two actual walls in $\mathscr{A}_1$.

\subsubsection{Stability of $\mathscr{O}_{\mathbb{P}^3}$ and  $\mathscr{O}_{\Lambda}$}

In the category $\mathscr{A}_1$, $\mathscr{O}_{\mathbb{P}^3}$ has its presentation as the Koszul complex:

$$\left[ \mathscr{O}_{\mathbb{P}^3}(-4)\to \mathscr{O}^4_{\mathbb{P}^3}(-3)\to \mathscr{O}^6_{\mathbb{P}^3}(-2)\to \mathscr{O}^4_{\mathbb{P}^3}(-1)\right]\to \mathscr{O}_{\mathbb{P}^3}$$
and its dimension vector is $[1464]$. The object $\mathscr{O}_{\Lambda}$ is a quotient of  $\mathscr{O}_{\mathbb{P}^3}$, and the dimension vector is $[1463]$ in $\mathscr{A}_1$. Its presentation is obtained by taking off one of the $\mathscr{O}_{\mathbb{P}^3}(-1)$ from the presentation of $\mathscr{O}_{\mathbb{P}^3}$ together with all the morphisms mapping to it.

$$\left[ \mathscr{O}_{\mathbb{P}^3}(-4)\to \mathscr{O}^4_{\mathbb{P}^3}(-3)\to \mathscr{O}^6_{\mathbb{P}^3}(-2)\to \mathscr{O}^3_{\mathbb{P}^3}(-1)\right]\to \mathscr{O}_{\Lambda}$$

The stability of $\mathscr{O}_{\mathbb{P}^3}$ (it was also proved in \cite{Macr__2014}) and $\mathscr{O}_{\Lambda}$ in $\mathscr{A}_1$ for all $t\in (0,1]$ follows from checking all their subcomplexes.

\subsubsection{Potential walls}

We prove that $W_1$ and $W_2$ are the only possible walls in $\mathscr{A}_1$ for the class $v=(0,0,3,-5)$. Assume that $E\in \mathscr{A}_1$ with class $v$, and an actual wall for $E$ in $\mathscr{A}_1$ is defined by the short exact sequence $0\to A\to E\to B\to 0$. It's straightforward to check that the dimension vector of $E$ in $\mathscr{
A}_1$ is $[1694]$. More precisely,

$$\left[ \mathscr{O}_{\mathbb{P}^3}(-4)\to \mathscr{O}^6_{\mathbb{P}^3}(-3)\to \mathscr{O}^9_{\mathbb{P}^3}(-2)\to \mathscr{O}^4_{\mathbb{P}^3}(-1)\right]\overset{qiso}{\cong} E$$

$E$ contains a subcomplex as 
$$\left[ 0\to \mathscr{O}^6_{\mathbb{P}^3}(-3)\to \mathscr{O}^9_{\mathbb{P}^3}(-2)\to \mathscr{O}^4_{\mathbb{P}^3}(-1)\right]$$
and the corresponding quotient is 
$$E \twoheadrightarrow \left[ \mathscr{O}_{\mathbb{P}^3}(-4)\to 0\to 0\to 0\right]=\mathscr{O}_{\mathbb{P}^3}(-4)[3]$$

Serre Duality shows that $Hom(E,\mathscr{O}_{\mathbb{P}^3}(-4)[3])\cong Hom(\mathscr{O}_{\mathbb{P}^3},E)^{\vee}$. So there is always a non-zero morphism $\mathscr{O}_{\mathbb{P}^3}\to E$. Moreover, either $A$ or $B$ has dimension vector $[1,a_{-2},a_{-1},a_{0}]$ ($a_0$,$a_{-1}$,$a_{-2} \geq 0$). So without loss of generality, assume $A$ has dimension vector $[1,a_{-2},a_{-1},a_{0}]$. 

Apply Serre duality to $Hom(E,\mathscr{O}_{\mathbb{P}^3}(-4)[3]) \to Hom(A,\mathscr{O}_{\mathbb{P}^3}(-4)[3])$. We have $Hom(\mathscr{O}_{\mathbb{P}^3}, E)^{\vee} \to Hom(\mathscr{O}_{\mathbb{P}^3}, A)^{\vee}$, which gives $Hom(\mathscr{O}_{\mathbb{P}^3}, A)\to Hom(\mathscr{O}_{\mathbb{P}^3}, E)$. This implies that $A$ contains a quotient complex of $\mathscr{O}_{\mathbb{P}^3}$. 

Table \ref{quotient complexes of O_P3} contains all the quotients of $\mathscr{O}_{\mathbb{P}^3}$ in $\mathscr{A}_1$ and their stability. If $A$ is one of the quotient complexes in the table, then a direct computation of their walls show that $\mathscr{O}_{\mathbb{P}^3}$ and $\mathscr{O}_{\Lambda}$ are the only options for $A$ to define an actual wall in $\mathscr{A}_1$.

\begin{table}[ht]
    \centering
    \begin{tabular}{|l|l|}
\hline
quotients of $\mathscr{O}_{\mathbb{P}^3}$& {Region where it }\\
in dimension vectors& is stable \\
\hline
$[1464]=\mathscr{O}_{\mathbb{P}^3}$ & any $t\in (0,1]$\\
\hline
$[1463]=\mathscr{O}_{\Lambda}$ & any $t\in (0,1]$ \\
\hline
$[1462]=$(complex) & stable for $t\in (0,1/2)$ \\
\hline
$[1461]=$(complex) & stable for $t\in (0,0.541))$ \\
\hline
$[1460]=$(complex) & stable for $t\neq1$ \\
\hline
$[1452]=\mathscr{O}_{l}$ ($l$ is a line in $\mathbb{P}^3$) & any $t\in (0,1]$\\
\hline    
$[1451]=$(complex) & stable for $t\in(0,0.528)$ \\
\hline
$[1450]=$(complex) & stable for $t\neq 1$ \\
\hline
$[1441]=$(complex) & stable for $t\in(0,0.586)$ \\
\hline
$[1440]=$(complex) & stable for $t\neq 1$ \\
\hline
$[1431]=$(complex) & stable for $t\in(0,0.423)$ \\
\hline
$[1430]=$(complex) & stable for $t=\neq 1$ \\
\hline
$[1331]=\mathbb{C}_P$ & any $t\in(0,1]$ \\
\hline
$[1330]=$(complex) & stable for $t\neq 1$ \\
\hline
$[1320]=$(complex) & stable for $t\neq1$ \\
\hline
$[1310]=$(complex) & stable for $t\neq 1$ \\
\hline
$[1300]=$(complex) & stable for $t\neq 1$ \\
\hline
$[1210]= \mathscr{O}_{l}(-2)[1]=$(complex) & stable for $t\neq 1$ \\
\hline
$[1200]=$(complex) & stable for $t\neq 1$ \\
\hline
$[1100]=$(complex) & stable for $t\neq 1$ \\
\hline
\end{tabular}
   \caption{Quotient complexes of $\mathscr{O}_{\mathbb{P}^3}$ in $\mathscr{A}_1$.}
    \label{quotient complexes of O_P3}
\end{table}

Next, we show that if $A$ strictly contains one of the quotient complexes $T$ from Table \ref{quotient complexes of O_P3}, i.e. $T\hookrightarrow A$, then $A$ or $B$ won't be semistable at the wall. This implies that an actual wall for the class $v=(0,0,3,-,5)$ can only be defined by $\mathscr{O}_{\mathbb{P}^3}$ or $\mathscr{O}_{\Lambda}$.

Table \ref{possible subcomplex of E} consists of all possible dimension vectors of $A$ that satisfy the following constraints.

\begin{enumerate}
    \item $A$ has the desired dimension vector.

    $T \hookrightarrow A \hookrightarrow E$ and $dim(T)<dim(A)<dim(E)$ in the lexicographical order in $\mathscr{A}_1$.

    \item The wall falls in $\mathscr{A}_1$.

    $\lambda^{Euler}_t :=-\frac{\chi'_t}{\chi_t}$ denotes the slope of the Euler stability. We require that $t_A\in (0,1]$, where $t_A$ is a solution of $\lambda^{Euler}_t(A)=\lambda^{Euler}_t(E)$.
    
    \item $A$ is not destabilized by $T$ at the wall $t_A$.

    $\lambda^{Euler}_{t_A}(T)\leq \lambda^{Euler}_{t_A}(A)$.

\end{enumerate}

\begin{table}[ht]
    \centering
    \begin{tabular}{|l|l|}
\hline
$T$ and $dim(T)$& $dim(A)$ s.t. $A$ satisfies\\
&  constraints (1)$\sim$(3) above\\ 
\hline
$[1464]$, $T=\mathscr{O}_{\mathbb{P}^3}$&$[1564]$\\
\hline
&$[1664]$\\
\hline
&$[1674]$\\
\hline
&$[1684]$\\
\hline
$[1463]$, $T=\mathscr{O}_{\Lambda}$ &$[1563]$\\
\hline
\end{tabular}
\caption{All possibilities of $A$}
\label{possible subcomplex of E}
\end{table}

We show that all objects $A$ from Table \ref{possible subcomplex of E} are unstable at its wall. So none of them can define an actual wall.

\begin{enumerate}
    \item $dim(A)=[1564]$ and $\mathscr{O}_{\mathbb{P}^3} \hookrightarrow A$

    In this case, we have $$0\to \mathscr{O}_{\mathbb{P}^3} \to A \to \mathscr{O}_{\mathbb{P}^3}(-3)[2]\to 0$$

    $Ext^1(\mathscr{O}_{\mathbb{P}^3}(-3)[2],\mathscr{O}_{\mathbb{P}^3})=0$ implies that $A=\mathscr{O}_{\mathbb{P}^3}(-3)[2]\bigoplus\mathscr{O}_{\mathbb{P}^3}$ which is unstable at the its wall.

    \item $dim(A)=[1574]$ and $\mathscr{O}_{\mathbb{P}^3} \hookrightarrow A$

    In this case, we have $$0\to \mathscr{O}_{\mathbb{P}^3} \to A \to F\to 0$$
    where $F\in \mathscr{A}_1$ has dimension vector $[0110]$.
\begin{enumerate}[label=(\alph*)]

    \item If $F\overset{qiso}{\cong}[0\to \mathscr{O}_{\mathbb{P}^3}(-3) \overset{0}{\to} \mathscr{O}_{\mathbb{P}^3}(-2) \to 0]$, then consider the morphism 

    $$A \twoheadrightarrow \mathscr{O}_{\mathbb{P}^3}(-3)[2]\bigoplus \mathscr{O}_{\mathbb{P}^3}(-2)[1] \twoheadrightarrow \mathscr{O}_{\mathbb{P}^3}(-2)[1]$$

    Let $K$ be the kernel of the composition $A\twoheadrightarrow \mathscr{O}_{\mathbb{P}^3}(-2)[1]$, then $K$ fits the following short exact sequence. 

    $$0\to \mathscr{O}_{\mathbb{P}^3}\to K \to \mathscr{O}_{\mathbb{P}^3}(-3)[2] \to 0$$
    $Ext^1(\mathscr{O}_{\mathbb{P}^3}(-3)[2], \mathscr{O}_{\mathbb{P}^3}) = 0$ implies that $K=\mathscr{O}_{\mathbb{P}^3}\bigoplus \mathscr{O}_{\mathbb{P}^3}(-3)[2]$. So $A$ fits into the short exact sequence

    $$0\to \mathscr{O}_{\mathbb{P}^3}\bigoplus \mathscr{O}_{\mathbb{P}^3}(-3)[2] \to A \to \mathscr{O}_{\mathbb{P}^3}(-2)[1] \to 0$$

    A direct computation shows that $\mathscr{O}_{\mathbb{P}^3}(-3)[2]$ destabilizes $A$ at the wall defined by $A$.

    \item If $F\overset{qiso}{\cong}[0\to \mathscr{O}_{\mathbb{P}^3}(-3) \overset{\neq 0}{\to} \mathscr{O}_{\mathbb{P}^3}(-2) \to 0]$, then $F\overset{qiso}{\cong}\mathscr{O}_{\Lambda}(-2)[1]$, where $\Lambda\subset \mathbb{P}^3$ is a plane. 

    $Ext^1(\mathscr{O}_{\Lambda}(-2)[1], \mathscr{O}_{\mathbb{P}^3})=0$ implies that $A=\mathscr{O}_{\Lambda}(-2)[1]\bigoplus \mathscr{O}_{\mathbb{P}^3}$ which is unstable at the wall defined by $A$. 

\end{enumerate}

    \item $dim(A)=[1664]$ and $\mathscr{O}_{\mathbb{P}^3} \hookrightarrow A$

    In this case, we have 
    $$0\to \mathscr{O}_{\mathbb{P}^3} \to A \to \mathscr{O}_{\mathbb{P}^3}^2(-3)[2]\to 0$$
    $Ext^1(\mathscr{O}_{\mathbb{P}^3}^2(-3)[2], \mathscr{O}_{\mathbb{P}^3})=0$ implies $A=\mathscr{O}_{\mathbb{P}^3}^2(-3)[2]\bigoplus \mathscr{O}_{\mathbb{P}^3}$ which is unstable at its wall.

    \item $dim(A)=[1674]$ and $\mathscr{O}_{\mathbb{P}^3} \hookrightarrow A$

    $A$ fits the short exact sequence $$0\to \mathscr{O}_{\mathbb{P}^3} \to A \to F \to 0$$where $dim(F)=[0210]$.

\begin{enumerate}[label=(\alph*)]

    \item If $F$ has a subcomplex $\mathscr{O}_{\mathbb{P}^3}(-3)[2]$ (dimension is $[0100]$), then let $Q$ be the quotient $0\to \mathscr{O}_{\mathbb{P}^3}(-3)[2] \to F \to Q \to 0$ (dim($Q$)=$[0110]$). 

    Let $K$ be the kernel of the composition $A\twoheadrightarrow F \twoheadrightarrow Q$. Similarly, we have $0\to \mathscr{O}_{\mathbb{P}^3}\to K \to \mathscr{O}_{\mathbb{P}^3}(-3)[2] \to 0$, and $K=\mathscr{O}_{\mathbb{P}^3}\bigoplus \mathscr{O}_{\mathbb{P}^3}(-3)[2]$. A direct computations shows that $\mathscr{O}_{\mathbb{P}^3}(-3)[2]$ destabilized $A$ at its wall.

    \item If $\mathscr{O}_{\mathbb{P}^3}(-3)[2]$ is not a subcomplex of $F$, then $F$ is a complex satisfying the short exact sequence 
    $$0\to \mathscr{O}_{\mathbb{P}^3}(-3)[2] \to F \to \mathscr{O}_{L}(-1)[1] \to 0$$where $L\subset \mathbb{P}^3$ is a line. $Ext^1(F, \mathscr{O}_{\mathbb{P}^3})=0$ implies that $A=F\oplus \mathscr{O}_{\mathbb{P}^3}$ which is unstable at the wall.

\end{enumerate}

    \item $dim(A)=[1684]$ and $\mathscr{O}_{\mathbb{P}^3} \hookrightarrow A$

    $A$ fits into the short exact sequence $0\to \mathscr{O}_{\mathbb{P}^3} \to A \to F \to 0 $, where $dim(F)=[0220]$. 

    \begin{enumerate}[label=(\alph*)]

    \item If $F$ contains a subcomplex $F_1$ whose dimension vector is $[0210]$, i.e. $0\to F_1 \to F \to \mathscr{O}_{\mathbb{P}^3}(-2)[1] \to 0$.

    Let $K$ be the kernel of the composition $A \twoheadrightarrow F \twoheadrightarrow \to \mathscr{O}_{\mathbb{P}^3}(-2)[1]$, then $K$ fits into the sequence $0\to \mathscr{O}_{\mathbb{P}^3} \to K \to F_1 \to 0$. $K=F_1\bigoplus \mathscr{O}_{\mathbb{P}^3}$ since the extension class vanishes, and $A$ is destabilized by $F_1$ at the wall.

    \item If $F$ doesn't contain any subcomplexes whose dimension is $[0210]$. Equivalently, $F \cong [0\to \mathscr{O}_{\mathbb{P}^3}^2(-3)\overset{\phi}{\to} \mathscr{O}_{\mathbb{P}^3}^2(-2)\to 0]$ where $\phi$ maps to both copies of $\mathscr{O}_{\mathbb{P}^3}(-2)$. In this situation, $F$ fits into the sequence 
    $$0\to F_1[2] \to  F \to T[1] \to 0$$where $F_1, T \in Coh(\mathbb{P}^3)$ and $T$ is a torsion sheaf. We have $Ext^1(F, \mathscr{O}_{\mathbb{P}^3})=0$ which implies  $A=F\bigoplus \mathscr{O}_{\mathbb{P}^3}$. $A$ is unstable at its wall. 

\end{enumerate}

    \item $dim(A)=[1563]$ and $\mathscr{O}_{\Lambda} \hookrightarrow A$

    In this case, we have $0\to \mathscr{O}_{\mathbb{P}^3} \to A \to \mathscr{O}_{\mathbb{P}^3}(-3)[2] \to 0$. $Ext^1(\mathscr{O}_{\mathbb{P}^3}(-3)[2], \mathscr{O}_{\mathbb{P}^3})=0$ implies $A=\mathscr{O}_{\mathbb{P}^3}(-3)[2]\bigoplus \mathscr{O}_{\mathbb{P}^3}$, and $A$ is unstable at its wall.

\end{enumerate}

\subsubsection{Stability of $Q[1]$ and a description of its quiver moduli.}

$Q[1]\in \mathscr{A}_1$ is the quotient complex in the short exact sequence:

$$W_1: 0\to \mathscr{O}_{\mathbb{P}^3} \to \mathscr{O}_C \to Q[1] \to 0 $$

Its dimension vector is $[0,2,3,0]$ in $\mathscr{A}_1$, so $Q$ is presented by the complex 
$$[0\to \mathscr{O}^2_{\mathbb{P}^3}(-3)\overset{M}{\rightarrow} \mathscr{O}^3_{\mathbb{P}^3}(-2) \to 0]$$ 
where $M\in Hom(\mathbb{C}^2,\mathbb{C}^3)\otimes \mathbb{C}[x_0,...,x_3]_1$. A direct computation shows that  $\lambda_t(\mathscr{O}^3_{\mathbb{P}^3}(-2))> \lambda_t(\mathscr{O}^3_{\mathbb{P}^3}(-3))$ for all $t\in (0,1)$, in which $\lambda_t=-\frac{\chi'_t}{\chi_t}$ denotes the slope function.

So we can equivalently use King's notation of stability of quiver representations (\cite{King_1994}). Let $\theta:=(-3,2)$, and for any subcomplex $F\hookrightarrow Q[1]$ in $\mathscr{A}_1$ whose dimension vector is $\underline{dim}(F)=[0,a,b,0]$, define the pairing $\theta([0,a,b,0]):=(-3)a+(2)b$. $Q[1]$ is stable if $\theta(F)>0$ for any subcomplex $F\hookrightarrow Q[1]$. The moduli space $K^{[2,3]}_{\theta}$ (for simplicity, we denote it by $K_{\theta}$) is a GIT quotient of the representation of the generalized Kronecker quiver $K$ with dimension vector $[2,3]$ and stability condition $\theta$. It's smooth of dimension $12$ from \cite{King_1994} and \cite{MR3803142}.

\[
\begin{tikzcd}
K:\arrow[r,draw=none]&
    \bullet
    \arrow[r, draw=none, "\raisebox{+1.5ex}{\vdots}" description]
    \arrow[r, bend left,        "\alpha_1"]
    \arrow[r, bend right, swap, "\alpha_4"]
    &
    \bullet\\
    \end{tikzcd}
\]

Next, we show the stratification of $K_{\theta}$. Let $x_0,...,x_3$ be the coordinates of $\mathbb{P}^3$, and  $C^{\bullet}:=[0\to \mathscr{O}^2_{\mathbb{P}^3}(-3)\overset{M}{\rightarrow} \mathscr{O}^3_{\mathbb{P}^3}(-2) \to 0]$ be a stable complex. Up to a base change, there are nine possibilities of $M$ that make $C^{\bullet}$ stable:

\begin{center}
    (1) $M=\begin{pmatrix}x_0,x_1,x_2\\x_1,x_2,x_3\end{pmatrix}$
    (2) $M=\begin{pmatrix}x_1,x_0,0\\x_0,x_2,x_3\end{pmatrix}$
    (3) $M=\begin{pmatrix}x_3,x_0,0\\0,x_2,x_1\end{pmatrix}$

    (4) $M=\begin{pmatrix}x_2,x_1,0\\0,x_1,x_0\end{pmatrix}$
    (5) $M=\begin{pmatrix}x_1,x_0,0\\x_3,x_2,x_0\end{pmatrix}$
    (6) $M=\begin{pmatrix}x_3,x_0,0\\0,x_1,x_0\end{pmatrix}$

    (7) $M=\begin{pmatrix}x_1,x_0,0\\x_2,x_1,x_0\end{pmatrix}$
    (8) $M=\begin{pmatrix}x_1,x_0,0\\0,x_1,x_0\end{pmatrix}$
    (9) $M=\begin{pmatrix}x_1,0,x_2\\0,x_1,x_3\end{pmatrix}$
\end{center}
    
Among those matrices, (1) $\sim$ (8) correspond to ideal sheaves of space curves in $\text{Hilb}^{3t+1}_{\mathbb{P}^3}$  (These were also shown in \cite{phdthesis}). The matrix in (9) defines a shifted sheaf $\mathscr{F}[1]$, in which $\mathscr{F}$ contains torsion. $\mathscr{F}$ fits into the short exact sequence: $0\to \mathscr{O}_{\Lambda}(-3)\to \mathscr{F} \to \mathscr{I}_P(-1)\to 0$, where $\Lambda$ is the plane defined by $x_1=0$, and $P$ is the point on $\Lambda$ defined by $x_1=x_2=x_3=0$. Indeed, the set of all matrices of type (9) is the flag variety $Flag:=\{0\subset \mathbb{C} \subset \mathbb{C}^3 \subset \mathbb{C}^4\}$ which is smooth of dimension $5$.

We make the conclusion that there are two strata on $K_{\theta}$: A smooth closed subvariety of dimension $5$ parameterizing sheaves $\mathscr{F}$, and its complement in $K_{\theta}$ parameterizing the space curves with Hilbert polynomial $3t+1$. Correspondingly, there are two general representatives for $Q$ in $\mathscr{A}_1$ such that $Q[1]$ is stable, the ideal sheaf of a space curve $\mathscr{I}_C$ or the sheaf $\mathscr{F}$.

\subsubsection{Stability of the complex  $\mathscr{F}_1$.}

We start by defining the complex $\mathscr{F}_1$ and then show that it is the only stable complex in $\mathscr{A}_1$ with dimension vector $[0231]$. 

\textbf{Define the complex $\mathscr{F}_1$ in $\mathscr{A}_1$.}

We showed that the presentation of the sheaf $\mathscr{F}$ is 

$$\mathscr{O}^2_{\mathbb{P}^3}(-3)\overset{M}{\rightarrow} \mathscr{O}^3_{\mathbb{P}^3}(-2)\cong \mathscr{F}$$
where $M=\begin{pmatrix}x_1,0,x_2\\0,x_1,x_3\end{pmatrix}$. In fact, the complex $\mathscr{O}^2_{\mathbb{P}^3}(-3)\to \mathscr{O}^3_{\mathbb{P}^3}(-2)$ can be extended to the following complex 
$$\mathscr{O}^2_{\mathbb{P}^3}(-3)\overset{M}{\to}\mathscr{O}^3_{\mathbb{P}^3}(-2)\overset{N}{\to}\mathscr{O}_{\mathbb{P}^3}(-1)$$

where $M=\begin{pmatrix}x_1,0,x_2\\0,x_1,x_3\end{pmatrix}$, and $N=\begin{pmatrix}x_2\\x_3\\-x_1\end{pmatrix}$. Define $\mathscr{F}_1$ to be this new complex 

$$\mathscr{F}_1:=\left[\mathscr{O}^2_{\mathbb{P}^3}(-3)\overset{M}{\to}\mathscr{O}^3_{\mathbb{P}^3}(-2)\overset{N}{\to}\mathscr{O}_{\mathbb{P}^3}(-1)\right]$$

Compare $\mathscr{F}_1$ to the presentation (Koszul complex) of the skyscraper sheaf $\mathbb{C}_P$ in $\mathscr{A}_1$: (Without loss of generality, assume $P$ is defined by $x_1=x_2=x_3=0$ in $\mathbb{P}^3$) 

        $$ \mathscr{O}_{\mathbb{P}^3}(-4) \xlongrightarrow{\begin{pmatrix}x_1,-x_2,x_3\end{pmatrix}} \mathscr{O}^3_{\mathbb{P}^3}(-3)\xlongrightarrow{\begin{pmatrix}x_2,0,-x_3\\x_1,-x_3,0\\0,-x_2,x_1\end{pmatrix}} \mathscr{O}^3_{\mathbb{P}^3}(-2)\xlongrightarrow{\begin{pmatrix}x_3\\x_1\\x_2\end{pmatrix}} \mathscr{O}_{\mathbb{P}^3}(-1) \to \mathbb{C}_P$$

        We see that $\mathscr{F}_1$ is indeed a subcomplex of $\mathbb{C}_P$. The quotient complex is $[\mathscr{O}_{\mathbb{P}^3}(-4)\overset{x_1}{\to} \mathscr{O}_{\mathbb{P}^3}(-3)\to 0\to 0] \cong \mathscr{O}_{\Lambda}(-3)[2]$. So we have the short exact sequence  $0\to \mathscr{F}_1 \to \mathbb{C}_P \to \mathscr{O}_{\Lambda}(-3)[2] \to 0$ in $\mathscr{A}_1$, and $\mathscr{F}_1$ is a complex whose cohomologies are:
        $\mathscr{H}^{-1}(\mathscr{F}_1)=\mathscr{O}_{\Lambda}(-3)$, $\mathscr{H}^{0}(\mathscr{F}_1)=\mathbb{C}_P$ and $\mathscr{H}^{i}(\mathscr{F}_1)=0$ for $i\neq -1, 0$.\\

\textbf{Prove that $\mathscr{F}_1$ is the only stable object with the dimension vector $[0231]$.}

    Assume that $G$ is a complex whose dimension vector is $[0231]$ in $\mathscr{A}_1$, and $C^{\bullet}\hookrightarrow G$ is a subcomplex  whose dimension vector is $[0,c,b,a]$ ($c=0,1,2$, $b=0,1,2,3$, $a=0,1$). The following result is from a direct computation:

    \begin{enumerate}
            \item If $a=0$, then $\lambda_t(C^{\bullet})>\lambda_t(G)$ for any $b=0,1,2,3$, $c=0,1,2$ and $t>0.1716$.

            \item If $a=1$, then $\lambda_t(C^{\bullet})<\lambda_t(G)$ for any $b=0,1,2,3$, $c=0,1,2$ and $t>0.1716$.
    \end{enumerate}

    This result shows that for any $t\in(0.1716,1]$ in $\mathscr{A}_1$, $G$ is stable if and only if it doesn't have any subcomplex $C^{\bullet}$ whose dimension vector is $[0,c,b,0]$. 

    More explicitly, if $[0\to \mathscr{O}^2_{\mathbb{P}^3}(-3) \overset{M}{\to} \mathscr{O}^3_{\mathbb{P}^3}(-2) \overset{N}{\to} \mathscr{O}_{\mathbb{P}^3}(-1)]$ is the presentation of $G$ in $\mathscr{A}_1$, where $N=(f_1,f_2,f_3)$ consists of linear functions $f_i$ ($i=1,2,3$), then $f_1,f_2,f_3$ must be linearly independent. 


    Next, we show that the matrix $M$ has to be in the form: $\begin{pmatrix}f_3,0,-f_1\\0,f_3,-f_2\end{pmatrix}$, and this will prove the claim.


    There are two $\mathscr{O}_{\mathbb{P}^3}(-3)$'s mapping to $\mathscr{O}^3_{\mathbb{P}^3}(-2)$, and the morphisms are row vectors in $M$. Assume the first row of $M$ is $(\phi_1,\phi_2,\phi_3)$ ($\phi_i$'s are linear functions), and we have the following diagram:
\[
    \begin{tikzcd}
    {}&\mathscr{O}_{\mathbb{P}^3}(-2)\arrow{dr}{f_1}&{}\\
    \mathscr{O}_{\mathbb{P}^3}(-3) \arrow{ur}{\phi_1}\arrow{r}{\phi_2}\arrow{dr}{\phi_3}&\mathscr{O}_{\mathbb{P}^3}(-2)\arrow{r}{f_2}&\mathscr{O}_{\mathbb{P}^3}(-1)\\
    {}&\mathscr{O}_{\mathbb{P}^3}(-2)\arrow{ur}{f_3}&{}
    \end{tikzcd}
\]

    Firstly we show that $\phi_i$'s are linearly dependent.
    Let $\langle f_1,f_2\rangle$ be the sub vector space of $\mathbb{C}[x_0,x_1,x_2,x_3]_1$ spanned by $f_1, f_2$. Since $G$ is a complex, we must have $\phi_1f_1+\phi_2f_2+\phi_3f_3=0$. Consider the equation mod $\langle f_1,f_2\rangle$, 
    we have $\bar{\phi_3}\bar{f_3}=0$ in the quotient space $\mathbb{C}[x_0,x_1,x_2,x_3]_1/\langle f_1,f_2\rangle$. 
    $f_i$'s are linearly independent, so $\bar{f_3}\neq 0$. This implies $\bar{\phi_3}=0$ and $\phi_3=k_1f_1+k_2f_2$ for some $k_1,k_2\in \mathbb{C}$. 
    The equation now becomes $\phi_1f_1+\phi_2f_2+(k_1f_1+k_2f_2)f_3=0$ which simplifies to $(\phi_1+k_1f_3)f_1+(\phi_2+k_2f_3)f_2=0$. Then we have $f_2|(\phi_1+k_1f_3)f_1$ and $f_1|(\phi_2+k_2f_3)f_2$. 
    $f_i$'s are linearly independent, so we have $f_2|\phi_1+k_1f_3$ and $f_1|\phi_2+k_2f_3$. 
    $\phi_i$'s and $f_i$'s are all linear functions, so there is some $k\in \mathbb{C}$ such that $\phi_2+k_2f_3=kf_1$ and $\phi_1+k_1f_3=-kf_2$. 
    Now $\phi_1=-k_1f_3-kf_2$, $\phi_2=-k_2f_3+kf_1$, $\phi_3=k_1f_1+k_2f_2$, and they satisfy $k_2\phi_1-k_1\phi_2+k\phi_3=0$.
    So $\phi_i$'s are linearly dependent.

    Therefore up to a base change, we may assume $\phi_3=0$, $\phi_1=f_2$ and $\phi_2=-f_1$. The presentation of $G$ becomes:
\[
        \begin{tikzcd}
        \mathscr{O}_{\mathbb{P}^3}(-3)\arrow{r}{f_2}\arrow{dr}[swap]{\phi_1}&\mathscr{O}_{\mathbb{P}^3}(-2)\arrow{dr}{f_1}&{}\\
        \mathscr{O}_{\mathbb{P}^3}(-3) \arrow{ur}[swap]{-f_1}\arrow{r}{\phi_2}\arrow{dr}{\phi_3}&\mathscr{O}_{\mathbb{P}^3}(-2)\arrow{r}{f_2}&\mathscr{O}_{\mathbb{P}^3}(-1)\\
        {}&\mathscr{O}_{\mathbb{P}^3}(-2)\arrow{ur}{f_3}&{}
        \end{tikzcd}
\]

         $\phi_3\neq 0$ in the diagram.
        Otherwise, $(\phi_1,\phi_2)=c(f_2,-f_1)$ for some $c\in \mathbb{C}$. 
        The map from the second  $\mathscr{O}_{\mathbb{P}^3}(-3)$ to $\mathscr{O}^3_{\mathbb{P}^3}(-2)$ will be $0$ by a base change.
        This implies $[ 0\to \mathscr{O}_{\mathbb{P}^3}(-3)\to 0 \to 0]$ is a subcomplex of $G$, making $G$ unstable. 
        
        If none of those $\phi_i$'s is zero, then use $\phi_2$ and $\phi_3$ to eliminate $\phi_1$ by a base change. The presentation becomes:
\[
        \begin{tikzcd}
        \mathscr{O}_{\mathbb{P}^3}(-3)\arrow{r}{f_2}\arrow{dr}{-f_1}&\mathscr{O}_{\mathbb{P}^3}(-2)\arrow{dr}{f_1}&{}\\
        \mathscr{O}_{\mathbb{P}^3}(-3)\arrow{r}{-f_3}\arrow{dr}{f_2}&\mathscr{O}_{\mathbb{P}^3}(-2)\arrow{r}{f_2}&\mathscr{O}_{\mathbb{P}^3}(-1)\\
        {}&\mathscr{O}_{\mathbb{P}^3}(-2)\arrow{ur}{f_3}&{}
        \end{tikzcd}
\]

This diagram is exactly the presentation of $\mathscr{F}_1$, and we prove the claim.

\subsection{Walls for the dual class.} Lastly, in this section, we show the walls for the dual class of $v=(0,0,3,-5)$. By definition (section 5), the dual class $v^{\vee}$ is $(0,0,3,-4)$, and its Hilbert polynomial is $P(t) = 3t+2$.

We recall that the two general Gieseker stable sheaves with Hilbert polynomial $P(t)=3t+2$ are: (1) $E=\mathscr{O}_{C}(P)$, where $C$ is a space cubic curve and $P$ is a point on $C$. (2) $E=L_{C_E}$, which is a degree $2$ line bundle on a plane cubic curve $C_E$. 
Their walls in $\mathscr{A}_t$ are given by the short exact sequences $W_1'$ and $W_2'$ below. They are in fact defined by the derived dual of $W_1$ and $W_2$ for the class $3t+1$.

$$W_1': \quad 0\to \left[\mathscr{O}^3_{\mathbb{P}^3}(-1)\to \mathscr{O}^2_{\mathbb{P}^3} \right] \to E \to \mathscr{O}_{\mathbb{P}^3}(-3)[2] \to 0$$
where the complex $\left[\mathscr{O}^3_{\mathbb{P}^3}(-1)\to \mathscr{O}^2_{\mathbb{P}^3} \right]$ is the subobject of $E$.

$$W_2': \quad  0\to \mathscr{I}_{P/\Lambda}(1)\to E \to \mathscr{O}_{\Lambda}(-2)[1]\to 0$$
where $P$ is a point in the plane $\Lambda$.

The duality results show that $W_1'$ and $W_2'$ are the mirror images of $W_1$ and $W_2$ in the $(t,u)$ plane. We have shown that $W_1$ and $W_2$ are actual walls at their right endpoints, so $W_1'$ and $W_2'$ must be actual walls at their left endpoints (in $\mathscr{A}_{-1}$).

Suppose Proposition \ref{boundedness of walls} were proved true, then $W_1$ (resp. $W_1'$) and $W_2$ (resp. $W_2'$) would be actual walls everywhere in the $(t,u)$ plane once we prove that they are actual walls at the other endpoint. This is because there is no possible intersecting of these walls.

For the rest of this section, we prove the stability for $\mathscr{O}_{\Lambda}(-2)[1]$ and $\mathscr{I}_{P/\Lambda}(1)$ in $\mathscr{A}_1$. This will imply that $W_2'$ is an actual wall at the right endpoint. 

The dimension vector of $\mathscr{O}_{\Lambda}(-2)[1]$ is $[0,1,1,0]$, and $[0,0,1,0]$ is the only non-trivial subcomplex. It is straightforward to check that $\mathscr{O}_{\Lambda}(-2)[1]$ is stable in $\mathscr{A}_1$ for $t\in (0,1)$.

The dimension vector of $\mathscr{I}_{P/\Lambda}(1)$ is $[2,8,11,5]$. Without loss of generality, assume that the coordinates of $\mathbb{P}^3$ are $x,y,z,w$, $\Lambda$ is defined by $\{x=0\}$ and $P$ is defined by $\{x=y=z=0\}$. The presentation of $\mathscr{I}_{P/\Lambda}(1)$ is as follows:

$$\mathscr{O}^2_{\mathbb{P}^3}(-4)\xrightarrow{M}\mathscr{O}^8_{\mathbb{P}^3}(-3)\xrightarrow{N}\mathscr{O}^{11}_{\mathbb{P}^3}(-2)\xrightarrow{S}\mathscr{O}^5_{\mathbb{P}^3}(-1)\xrightarrow{T}\mathscr{I}_{P/\Lambda}(1)$$

The stability of $\mathscr{I}_{P/\Lambda}(1)$ follows from a direct computation that the slopes of all its subcomplexes are smaller than the slope of $\mathscr{I}_{P/\Lambda}(1)$ at the wall. See Appendix \ref{presentation and subcomplexes} for matrices $M,N,S$ and the dimension vector of all the subcomplexes.

\section{The wall-crossings for the class $3t+1$}

In this section, we study the wall-crossings for the class $v=(0,0,3,-5)$. The moduli space in the last chamber in $\mathscr{A}_1$ turns out to be the Gieseker moduli space. This gives some clue that the last wall in $\mathscr{A}_1$ is expected to be the last wall for all $t\in \mathbb{R}$, and the unbounded chamber containing $t>>0$ is the Gieseker chamber. The main technique we use is the elementary modification. A similar process can be found in \cite{MR3803142} and \cite{MR2998828}.

In section $6$, we found two actual walls in $\mathscr{A}_1$ for $v$. These are $W_1: 0\to \mathscr{O}_{\mathbb{P}^3}\to E \to Q[1] \to 0$ at $t=0.35$, and $W_2: 0\to \mathscr{O}_{\Lambda}\to E \to \mathscr{F}_1 \to 0$ at $t=0.72$. Denote the three chambers in $\mathscr{A}_1$ by $C_1:= \{t\in (0,0.35)\}$, $C_2:= \{t\in (0.35,0.72)\}$ and $C_3:= \{t\in (0.72,1]\}$.

\subsection {Moduli space $\mathscr{M}_{1}$ in $C_1$}

The moduli space in $t\in (0,0.35)$ is empty since every object $E$ is destabilized by $\mathscr{O}_{\mathbb{P}^3} \to E$. The existence of such a map is given by Serre Duality as shown in section $6$.

\subsection{Moduli space $\mathscr{M}_{W_1}$ at the first wall $W_1$}

$W_1$ is defined by $0\to \mathscr{O}_{\mathbb{P}^3}\to E \to Q[1] \to 0$. The moduli space at $W_1$ of $E$ is the same with the moduli of $Q[1]$ which is the Kronecker moduli space $K_{(2,3)}:=K^{[2,3]}_{\theta}$ ($\theta=(-3,2)$ defines the stability condition). It is a smooth variety of dimension $12$.

\subsection {Moduli space $\mathscr{M}_{2}$ in $C_2$.}

For $t\in (0.35,0.72)$. Recall that the quotient $Q[1]$ has two general representatives which are stable: (1) $Q=\mathscr{I}_{C}$ or (2) $Q=\mathscr{F}$, and the locus in $K_{(2,3)}$ parameterizing $\mathscr{F}$ is a smooth $5-$dimensional flag variety. Denote this locus by $H$. A direct computation shows that $Ext^1(\mathscr{I}_{C}[1],\mathscr{O}_{\mathbb{P}^3})=\mathbb{C}$ and $Ext^1(\mathscr{F}[1],\mathscr{O}_{\mathbb{P}^3})=\mathbb{C}^4$. This implies that $\mathscr{M}_{2}$ is isomorphic to $ \mathscr{M}_{W_1}$ outside $H$, and a $\mathbb{P}^3$ bundle over $H$. Denote this $\mathbb{P}^3$ bundle by $\mathscr{M}_{\mathscr{F}}$.

\subsection{Moduli space $\mathscr{M}_{W_2}$ at the second wall.}

$W_2$ is defined by 
$0\to \mathscr{O}_{\Lambda}\to E \to \mathscr{F}_1 \to 0$.

There are two strata on the moduli space $\mathscr{M}_{2}$ which are $K_{(2,3)}\backslash H$ and $M_{\mathscr{F}}$. $K_{(2,3)}\backslash H$ parameterizes the structure sheaf of space cubic curves $C$, and $M_{\mathscr{F}}$ parameterizes those objects $E$ which fit into the short exact sequence $0\to \mathscr{O}_{\mathbb{P}^3}\to E\to \mathscr{F}[1] \to 0$. 
Objects $E$ from $M_{\mathscr{F}}$ satisfy the following sequence as well 
$$0\to \mathscr{F}_1\to E \to \mathscr{O}_{\Lambda'} \to 0$$ 

In the sequence, $\mathscr{F}_1$ is the complex defined in the last section, and $\mathscr{F}_1$ corresponds to a point $(P,\Lambda)$ of the Flag variety $H$. $\Lambda'\subset \mathbb{P}^3$ is a plane but not necessarily the same with the plane $\Lambda$ encoded in $\mathscr{F}_1$. A direct computation shows that
\begin{equation*}
\left\{
\begin{array}{ccc}    
Ext^1(\mathscr{F}_1, \mathscr{O}_{\Lambda})=\mathbb{C}^9, & Ext^1(\mathscr{O}_{\Lambda}, \mathscr{F}_1)=\mathbb{C} & {}\\
Ext^1(\mathscr{F}_1, \mathscr{O}_{\Lambda'})=0, &
Ext^1(\mathscr{O}_{\Lambda'}, \mathscr{F}_1)=\mathbb{C} & \text{if} \quad \Lambda'\neq \Lambda\\
\end{array}
\right.
\end{equation*} 

This implies that $\mathscr{M}_{W_2}=\mathscr{M}_{2}$. Objects from the stratum $K_{(2,3)}\backslash H$ stay stable at the wall while objects from $\mathscr{M}_{\mathscr{F}}$ become semi-stable.

\subsection {Moduli space $\mathscr{M}_{3}$ in  $C_3$} 

From the extension classes 
\begin{equation*}
\left\{
\begin{array}{ccc}    
Ext^1(\mathscr{F}_1, \mathscr{O}_{\Lambda})=\mathbb{C}^9 &\\
Ext^1(\mathscr{F}_1, \mathscr{O}_{\Lambda'})=0 & \text{if} \quad \Lambda'\neq \Lambda\\
\end{array}
\right.
\end{equation*} 
we see that extensions from the sequence $0\to \mathscr{O}_{\Lambda'}\to E \to \mathscr{F}_1 \to 0 $ are not stable in $C_3$. The new stable objects are from the extension $0\to \mathscr{O}_{\Lambda}\to E \to \mathscr{F}_1 \to 0$, in which $\Lambda$ is the same with the one encoded in the complex $\mathscr{F}_1$.

So when crossing the second wall $W_2$ from $C_1$ to $C_2$, the stratum $K_{(2,3)}\backslash H$ stays, and the $\mathbb{P}^3$ bundle $\mathscr{M}_{\mathscr{F}}$ disappears with only the base $H$ remaining. $H$ then becomes a $\mathbb{P}^8$ bundle over $H$ from the above computation. Denote this bundle by $\mathbf{P}$. We will next study this $\mathbb{P}^8$ bundle $\mathbf{P}$, and then glue it to $K_{(2,3)}\backslash H$ using the elementary modification. The resultant moduli space $\mathscr{M}_{C_3}$ turns out to be the Gieseker moduli space, denoted by $\mathscr{M}^{3t+1}_{\mathbb{P}^3}$.

\subsubsection{A description of \ $\mathbf{P}$.}

We show in this subsection that $\mathbf{P}$ is the fibered space over $\mathbb{P}^{3\vee}$ whose fibers are $\mathscr{M}^{3t+1}_{\mathbb{P}^2}$.

We have shown that a complex $\mathscr{F}_1$ corresponds to a point of the flag variety: $\{P\in \Lambda \subset \mathbb{P}^3\}$. Indeed, this flag variety is the same with $H$ since $\mathscr{F}_1$ is the unique extension of $\mathscr{F}$. 
Without loss of generality, we fix a complex $\mathscr{F}_1$, in which $\Lambda$ is defined by $x_1=0$ and the point $P$ is defined by $x_1=x_2=x_3=0$. We will show that the vector space $Ext^1(\mathscr{F}_1,\mathscr{O}_{\Lambda})$ (up to a scalar multiplication) parameterizes plane cubic curves in $\Lambda$ which go through $P$. 

Let $\Lambda'$ be an arbitrary plane in $\mathbb{P}^3$, and we have the extension groups: 

\begin{equation*}
  Ext^1(\mathscr{F}_1,O_{\Lambda'})=\left\{
  \begin{array}{ll}
    0, & \text{if} \Lambda'\neq \Lambda \\
    \mathbb{C}^9, & \text{if} \Lambda' = \Lambda
  \end{array}\right.
\end{equation*}

In the short exact sequence: $0\to \mathscr{F}_1 \to \mathbb{C}_P \to \mathscr{O}_{\Lambda}(-3)[2]\to 0 $, the plane $\Lambda$ encoded in $\mathscr{F}_1$ is in fact the same with the plane in the quotient object $\mathscr{O}_{\Lambda}(-3)[2]$. 

Apply the functor $Hom(-,\mathscr{O}_{\Lambda})$ to the sequence: $0\to \mathscr{F}_1 \to \mathbb{C}_P \to \mathscr{O}_{\Lambda}(-3)[2]\to 0 $, we have a long exact sequence of cohomologies:

$$0\to Ext^1(\mathscr{F}_1,\mathscr{O}_{\Lambda})=\mathbb{C}^9 \to Ext^2(\mathscr{O}_{\Lambda}(-3)[2],\mathscr{O}_{\Lambda})=\mathbb{C}^{10} \overset{\phi}{\to} Ext^2(\mathbb{C}_P,\mathscr{O}_{\Lambda})=\mathbb{C}\to ... $$
in which $Ext^1(\mathscr{F}_1,\mathscr{O}_{\Lambda})$ is the kernal of $\phi$.

In the diagram, $Ext^2(\mathscr{O}_{\Lambda}(-3)[2],\mathscr{O}_{\Lambda})=Hom(\mathscr{O}_{\Lambda}(-3),\mathscr{O}_{\Lambda})$, and this can be computed by the resolution of $\mathscr{O}_{\Lambda}(-3)$: $[\mathscr{O}_{\mathbb{P}^3}(-4)\overset{x_1}{\to} \mathscr{O}_{\mathbb{P}^3}(-3)]\to \mathscr{O}_{\Lambda}(-3)$. Apply the functor $Hom(-,\mathscr{O}_{\Lambda})$ to this resolution, and we have 

$$0\to Hom(\mathscr{O}_{\Lambda}(-3),\mathscr{O}_{\Lambda})\to Hom(\mathscr{O}_{\mathbb{P}^3}(-3),\mathscr{O}_{\Lambda})=\mathbb{C}^{10} \xrightarrow[=0]{x_1} 
Hom(\mathscr{O}_{\mathbb{P}^3}(-4),\mathscr{O}_{\Lambda}) =\mathbb{C}^{15}$$
which implies $Hom(\mathscr{O}_{\Lambda}(-3),\mathscr{O}_{\Lambda})=\mathbb{C}^{10}$.

Apply the functor $Hom(-,\mathscr{O}_{\Lambda})$ to the Koszul resolution of $\mathbb{C}_P$, the cohomology at "$\mathscr{O}_{\mathbb{P}^3}(-3)$" gives  $Ext^2(\mathbb{C}_P,\mathscr{O}_{\Lambda})=\mathbb{C}$. By definition,  $Ext^2(\mathbb{C}_P,\mathscr{O}_{\Lambda})={Ker(\alpha)}/{Im(\beta)}$ in the complex:

$$Hom(\mathscr{O}_{\mathbb{P}^3}(-4),\mathscr{O}_{\Lambda})\xleftarrow{\alpha} Hom(\mathscr{O}^3_{\mathbb{P}^3}(-3),  \mathscr{O}_{\Lambda})\xleftarrow{\beta} Hom(\mathscr{O}^3_{\mathbb{P}^3}(-2),\mathscr{O}_{\Lambda})$$
 
Back to the exact sequence 
$$0\to Ext^1(\mathscr{F}_1,\mathscr{O}_{\Lambda})=\mathbb{C}^9 \to Ext^2(\mathscr{O}_{\Lambda}(-3)[2],\mathscr{O}_{\Lambda})=\mathbb{C}^{10} \overset{\phi}{\to} \mathbb{C}=Ext^2(\mathbb{C}_P,\mathscr{O}_{\Lambda})=\frac{Ker(\alpha)}{Im(\beta)} \to 0$$ 
we have that
$Ker(\phi)= Im(\beta)$.

$Im(\beta)$ is indeed the set $\{x_2Q_2+x_3Q_3\}$,
where $Q_2, Q_3 \in Hom(\mathscr{O}_{\mathbb{P}^3}(-2),\mathscr{O}_{\Lambda})=H^0(\Lambda, \mathscr{O}_{\Lambda}(2))$ are two quadric curves in $\Lambda$. This is because $x_1=0$ on $\Lambda$, and the non-zero objects in $Im(\beta)$ are computed by applying $Hom(-,\mathscr{O}_{\Lambda})$ to the diagram in Figure \ref{part of Koszul} (this is part of the Koszul resolution of $\mathbb{C}_P$).
The set $Im(\beta)=\left\{ x_2Q_2+x_3Q_3|Q_2,Q_3\in H^0(\Lambda,\mathscr{O}_{\Lambda}(2))\right\}$ consists of sections in $H^0(\Lambda,\mathscr{O}_{\Lambda}(3))$ which vanish at the intersection of the loci of $x_2$ and $x_3$. This is exactly the set of all cubic curve in $\Lambda$ that go through $P$.

\begin{figure}
    \centering
        \begin{tikzcd}
        {}&\mathscr{O}_{\mathbb{P}^3}(-3)\arrow{r}{x_2}\arrow{dr}{x_3}&\mathscr{O}_{\mathbb{P}^3}(-2)\\
        \mathscr{O}_{\mathbb{P}^3}(-4) \arrow{ur}{x_1}\arrow{r}{-x_2}\arrow{dr}{-x_3}&\mathscr{O}_{\mathbb{P}^3}(-3)&\mathscr{O}_{\mathbb{P}^3}(-2)\\
        {}&\mathscr{O}_{\mathbb{P}^3}(-3)&\mathscr{O}_{\mathbb{P}^3}(-2)
        \end{tikzcd}
        \caption{}
    \label{part of Koszul}
\end{figure}

So we have the following morphism:

$$\mathbf{P} \to H=\left\{ P\in \Lambda \subset \mathbb{P}^3 \right\}$$
in which the fiber at a point $(P,\Lambda)\in H$ parameterizes all the plane cubic curves in $\Lambda$ that go through $P$.

Moreover, consider the morphisms 
$$\mathbf{P}\to H=\left\{ P\in \Lambda \subset \mathbb{P}^3 \right\}\to \left\{ \Lambda \subset \mathbb{P}^3 \right\}=\mathbb{P}^{3\vee}$$

A point in $\mathbb{P}^{3\vee}$ corresponds to a plane $\Lambda \subset \mathbb{P}^3$, and the fiber over it in $\mathbf{P}$ parameterizes the pair $\left\{ C,P \right\}$, where $C\subset \Lambda$ is a plane cubic curve passing through the point $P\in \Lambda$. So this fiber is the universal cubic curve $\mathscr{C}\subset |H^0(\mathbb{P}^2, \mathscr{O}_{\mathbb{P}^2}(3))|\times \mathbb{P}^2$ which is the Gieseker moduli space $\mathscr{M}^{3t+1}_{\mathbb{P}^2}$ (\cite{le1993faisceaux}). 

This proves the claim that $\mathbf{P}$ is fibered over $\mathbb{P}^{3\vee}$ with fibers $\mathscr{M}^{3t+1}_{\mathbb{P}^2}$. It also matches the result in \cite{Freiermuth_2004} that $\mathbf{P}$ is a component of the Gieseker moduli space $\mathscr{M}^{3t+1}_{\mathbb{P}^3}$ parameterizing degree one line bundles on plane curves.

\subsubsection{The elementary modification}

We have shown that when crossing the second wall $W_2$, the component $\mathscr{M}_{\mathscr{F}}$ disappears, and its base $H$ is replaced by $\mathbf{P}$. It's known (\cite{Freiermuth_2004}) that $K_{(2,3)}\backslash H$ and $\mathbf{P}$ are the components of the Gieseker moduli space $\mathscr{M}^{3t+1}_{\mathbb{P}^3}$. So it is expected that $\mathbf{P}$ is glued to $K_{(2,3)}$ along the exceptional divisor of its blow-up $\mathbf{B}:=Bl_H{(K_{(2,3)})}$ ) (\cite{Freiermuth_2004}, \cite{MR3803142}).

We need some computational results from a few commutative diagrams in this subsection. We will show one of the diagrams in the next lemma, and see Appendix \ref{auxiliary diagrams} for the rest of the diagrams.

\begin{Lemma}{\label{commutative diagrams}}

\begin{enumerate}
\item For $0\to \mathscr{O}_{\mathbb{P}^3} \to E \to \mathscr{I}_C[1] \to 0$ ($C \in \mathbb{P}^3$ is a twisted cubic curve.) the following diagram is commutative.
\begin{footnotesize}
\[
\begin{tikzcd}
0 \arrow{r}\arrow{d}&\begin{matrix}Hom(\mathscr{O}_{\mathbb{P}^3},\mathscr{O}_{\mathbb{P}^3})\\=\mathbb{C}\end{matrix}\arrow{r}\arrow{d}{\cong}&\begin{matrix}Hom(\mathscr{O}_{\mathbb{P}^3},E)\\=\mathbb{C}\end{matrix}\arrow{r}\arrow{d}&\begin{matrix}Hom(\mathscr{O}_{\mathbb{P}^3},\mathscr{I}_C[1])\\=0\end{matrix}\arrow{r}\arrow{d}&\begin{matrix}Ext^1(\mathscr{O}_{\mathbb{P}^3},\mathscr{O}_{\mathbb{P}^3})\\=0\end{matrix}\arrow{d}\\
\begin{matrix}Hom(\mathscr{I}_C[1],\mathscr{I}_C[1])\\=\mathbb{C}\end{matrix}\arrow{r}{\cong}\arrow{d}&\begin{matrix}Ext^1(\mathscr{I}_C[1],\mathscr{O}_{\mathbb{P}^3})\\=\mathbb{C}\end{matrix}\arrow{r}\arrow{d}&\begin{matrix}Ext^1(\mathscr{I}_C[1],E)\\=\mathbb{C}^{12}\end{matrix}\arrow{r}\arrow{d}&\begin{matrix}Ext^1(\mathscr{I}_C[1],\mathscr{I}_C[1])\\=\mathbb{C}^{12}\end{matrix}\arrow{r}{=0}\arrow{d}&\begin{matrix}Ext^2(\mathscr{I}_C[1],\mathscr{O}_{\mathbb{P}^3})\\=\mathbb{C}^{11}\end{matrix}\arrow{d}\\
\begin{matrix}Hom(E,\mathscr{I}_C[1])\\=\mathbb{C}\end{matrix}\arrow{r}\arrow{d}&\begin{matrix}Ext^1(E,\mathscr{O}_{\mathbb{P}^3})\\=0\end{matrix}\arrow{r}\arrow{d}&\begin{matrix}Ext^1(E,E)\arrow{r}\arrow{d}\\=\mathbb{C}^{12}\end{matrix}&\begin{matrix}Ext^1(E,\mathscr{I}_C[1])\\=\mathbb{C}^{12}\end{matrix}\arrow{r}{=0}\arrow{d}&\begin{matrix}Ext^2(E,\mathscr{O}_{\mathbb{P}^3})\\=\mathbb{C}\end{matrix}\arrow{d}\\
\begin{matrix}Hom(\mathscr{O}_{\mathbb{P}^3},\mathscr{I}_C[1])\\=0\end{matrix}\arrow{r}\arrow{d}&\begin{matrix}Ext^1(\mathscr{O}_{\mathbb{P}^3},\mathscr{O}_{\mathbb{P}^3})\\=0\end{matrix}\arrow{r}\arrow{d}&\begin{matrix}Ext^1(\mathscr{O}_{\mathbb{P}^3},E)\\=0\end{matrix}\arrow{r}\arrow{d}&\begin{matrix}Ext^1(\mathscr{O}_{\mathbb{P}^3},\mathscr{I}_C[1])\\=0\end{matrix}\arrow{r}\arrow{d}&\begin{matrix}Ext^2(\mathscr{O}_{\mathbb{P}^3},\mathscr{O}_{\mathbb{P}^3})\\=0\end{matrix}\arrow{d}\\
\begin{matrix}Ext^1(\mathscr{I}_C[1],\mathscr{I}_C[1])\\=\mathbb{C}^{12}\end{matrix}\arrow{r}{=0}&\begin{matrix}Ext^2(\mathscr{I}_C[1],\mathscr{O}_{\mathbb{P}^3})\\=\mathbb{C}^{11}\end{matrix}\arrow{r}&\begin{matrix}Ext^2(\mathscr{I}_C[1],E)\\=\mathbb{C}^{11}\end{matrix}\arrow{r}&\begin{matrix}Ext^2(\mathscr{I}_C[1],\mathscr{I}_C[1])\\=0\end{matrix}\arrow{r}&\begin{matrix}Ext^3(\mathscr{I}_C[1],\mathscr{O}_{\mathbb{P}^3})\\=0\end{matrix}\\
\end{tikzcd}
\]
\end{footnotesize}

\item
For any plane $\Lambda' \subset \mathbb{P}^3$

$Hom(\mathscr{O}_{\Lambda'},\mathscr{F}_1) = Ext^2(\mathscr{O}_{\Lambda'},\mathscr{F}_1) = Ext^3(\mathscr{O}_{\Lambda'},\mathscr{F}_1) = 0$, $Ext^1(\mathscr{O}_{\Lambda'},\mathscr{F}_1)=\mathbb{C}^{1}$
\item 
$Hom(\mathscr{F}_1,\mathscr{O}_{\Lambda})=Ext^3(\mathscr{F}_1,\mathscr{O}_{\Lambda}))=0$, $Ext^1(\mathscr{F}_1,\mathscr{O}_{\Lambda}))=\mathbb{C}^{9}$, $Ext^2(\mathscr{F}_1,\mathscr{O}_{\Lambda}))=\mathbb{C}^{14}$

\end{enumerate}

\end{Lemma}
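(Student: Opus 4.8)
The plan is to treat all three parts as computations of $Ext$-groups between the explicit objects $\mathscr{O}_{\mathbb{P}^3}$, $E$, $\mathscr{I}_C[1]$, $\mathscr{F}_1$, $\mathscr{O}_{\Lambda}$, $\mathscr{O}_{\Lambda'}$ and $\mathbb{C}_P$, all of which admit short resolutions by line bundles. For part (1), the large array is nothing more than the \emph{bifunctorial} $Ext$-lattice attached to the short exact sequence $0\to \mathscr{O}_{\mathbb{P}^3}\to E\to \mathscr{I}_C[1]\to 0$: each row is the long exact sequence obtained by applying $Hom(X,-)$ for $X\in\{\mathscr{O}_{\mathbb{P}^3},\mathscr{I}_C[1],E\}$, and each column is the long exact sequence obtained by applying $Hom(-,Y)$ for $Y\in\{\mathscr{I}_C[1],\mathscr{O}_{\mathbb{P}^3},E\}$, to the \emph{same} sequence. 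Consequently commutativity is not a separate computation: every square commutes (up to the usual sign) by naturality of the connecting homomorphisms of $R\mathscr{H}om$ in each variable, which is a standard fact of homological algebra. So the only genuine content of part (1) is the verification of the stated dimensions of the entries.

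For those dimensions I would use three resolutions together with Serre duality on $\mathbb{P}^3$ ($\omega_{\mathbb{P}^3}=\mathscr{O}_{\mathbb{P}^3}(-4)$): the Koszul resolution of $\mathscr{O}_{\mathbb{P}^3}$, the Hilbert--Burch resolution $0\to \mathscr{O}_{\mathbb{P}^3}(-3)^2\to \mathscr{O}_{\mathbb{P}^3}(-2)^3\to \mathscr{I}_C\to 0$ of the twisted cubic, and the defining sequence of $E$. The two load-bearing inputs are $Ext^1(\mathscr{I}_C[1],\mathscr{I}_C[1])=Ext^1(\mathscr{I}_C,\mathscr{I}_C)=\mathbb{C}^{12}$ (the tangent space of $\mathrm{Hilb}^{3t+1}_{\mathbb{P}^3}$ at a smooth point, since twisted cubics move in a $12$-dimensional family) and $Ext^2(\mathscr{I}_C,\mathscr{I}_C)=0$ (smoothness); the groups $Ext^i(\mathscr{I}_C[1],\mathscr{O}_{\mathbb{P}^3})$ come from the Hilbert--Burch resolution and Serre duality, and $Ext^i(\mathscr{O}_{\mathbb{P}^3},\mathscr{O}_{\mathbb{P}^3})=H^i(\mathscr{O}_{\mathbb{P}^3})$ is immediate. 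Once the ``outer'' groups are known, the entries involving $E$ — in particular the middle value $Ext^1(E,E)=\mathbb{C}^{12}$ — are forced by chasing the rows and columns and checking that the relevant connecting maps vanish (the annotations ``$=0$'' in the diagram record exactly these vanishings, which follow because the source or target is zero or by the $Ext^2(\mathscr{I}_C,\mathscr{I}_C)=0$ smoothness statement).

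For parts (2) and (3) the common tool is the defining triangle $0\to \mathscr{F}_1\to \mathbb{C}_P\to \mathscr{O}_{\Lambda}(-3)[2]\to 0$ established in Section~7. Applying $Hom(\mathscr{O}_{\Lambda'},-)$ gives part (2) and applying $Hom(-,\mathscr{O}_{\Lambda})$ gives part (3); in the latter case the long exact sequence is precisely the one already exploited in the description of $\mathbf{P}$, where $Ext^2(\mathscr{O}_{\Lambda}(-3)[2],\mathscr{O}_{\Lambda})=\mathbb{C}^{10}$ and $Ext^2(\mathbb{C}_P,\mathscr{O}_{\Lambda})=\mathbb{C}$ were computed, so that $Ext^1(\mathscr{F}_1,\mathscr{O}_{\Lambda})=\ker\phi=\mathbb{C}^9$; extending that same sequence one step further, using $Ext^3(\mathscr{O}_{\Lambda}(-3)[2],\mathscr{O}_{\Lambda})=\mathbb{C}^{15}$ and $Ext^3(\mathbb{C}_P,\mathscr{O}_{\Lambda})=\mathbb{C}$, yields $Ext^2(\mathscr{F}_1,\mathscr{O}_{\Lambda})=\mathbb{C}^{14}$ and $Ext^3(\mathscr{F}_1,\mathscr{O}_{\Lambda})=0$. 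All the atomic groups $Ext^*(\mathscr{O}_{\Lambda'},\mathbb{C}_P)$, $Ext^*(\mathscr{O}_{\Lambda'},\mathscr{O}_{\Lambda}(-3))$, $Ext^*(\mathbb{C}_P,\mathscr{O}_{\Lambda})$, $Ext^*(\mathscr{O}_{\Lambda}(-3),\mathscr{O}_{\Lambda})$ I would compute from the hyperplane resolution $0\to\mathscr{O}_{\mathbb{P}^3}(-1)\to\mathscr{O}_{\mathbb{P}^3}\to\mathscr{O}_{\Lambda}\to 0$ (twisted as needed), the Koszul resolution of $\mathbb{C}_P$, and Serre duality.

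The main obstacle is not the dimension count but pinning down the \textbf{ranks of the connecting maps}. In part (3) one must know that $\phi:\mathbb{C}^{10}\to\mathbb{C}$ is surjective and that the next connecting map $\mathbb{C}^{15}\to Ext^3(\mathbb{C}_P,\mathscr{O}_{\Lambda})=\mathbb{C}$ is surjective; each reduces to the geometric fact that the relevant multiplication-by-linear-form maps either vanish on $\Lambda$ (because $x_1\equiv 0$ there) or are nonzero, which must be read off from the explicit Koszul differentials rather than from dimensions alone. Likewise, the slightly surprising uniformity asserted in part (2) — namely $Ext^1(\mathscr{O}_{\Lambda'},\mathscr{F}_1)=\mathbb{C}$ for \emph{every} plane $\Lambda'$, including $\Lambda'\neq\Lambda$ — requires a short case analysis according to whether $\Lambda'=\Lambda$ and whether $P\in\Lambda'$, checking in each case that the connecting maps conspire to leave a one-dimensional $Ext^1$ while forcing $Hom$, $Ext^2$, and $Ext^3$ to vanish. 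The sign bookkeeping in the commutativity of the part (1) diagram is routine by comparison and can be dispatched by the naturality argument without tracking signs explicitly.
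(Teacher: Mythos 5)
Your proposal is correct and follows what is essentially the paper's (unwritten) argument: the paper states Lemma 7.1 without proof, treating the entries as the output of direct computation from the Koszul and Hilbert--Burch resolutions, Serre duality, and the defining triangle $\mathscr{F}_1\to\mathbb{C}_P\to\mathscr{O}_{\Lambda}(-3)[2]$, with commutativity coming for free from bifunctoriality of $\operatorname{Ext}$ exactly as you say, and the surjectivity of $\phi$ is indeed the one point the paper does address explicitly (in the description of $\mathbf{P}$, via the identification of $\ker\phi$ with $\operatorname{Im}(\beta)$). One small dividend of actually carrying out your column-chase: the entry $\operatorname{Ext}^2(E,\mathscr{O}_{\mathbb{P}^3})=\mathbb{C}$ in the paper's diagram is inconsistent with exactness of the second column (whose neighboring terms $\operatorname{Ext}^1(\mathscr{O},\mathscr{O})$ and $\operatorname{Ext}^2(\mathscr{O},\mathscr{O})$ vanish, forcing $\operatorname{Ext}^2(E,\mathscr{O})\cong\operatorname{Ext}^2(\mathscr{I}_C[1],\mathscr{O})=\mathbb{C}^{11}$, in agreement with the direct Serre-duality computation $H^1(\mathscr{O}_C(-4))^*$), so that entry appears to be a typo which your method would catch.
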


\textbf{Some notations}:

$\bullet$ $\mathbf{B}:=Bl_H(K_{(2,3)})\xrightarrow{b} K_{(2,3)}$ where the morphism is denoted by $b$. Let $D$ be the exceptional divisor, and $b_H$: $D\xrightarrow{b_H} H$ be the restriction of $b$ to the exceptional divisor.

$\bullet$ $\pi_H$, $\pi_{\mathbf{P}}$ and $\pi_D$ denote the projections: $H\times \mathbb{P}^3 \xrightarrow{\pi_H} H$, $\mathbf{P}\times \mathbb{P}^3 \xrightarrow{\pi_{\mathbf{P}}} \mathbf{P}$, $D\times \mathbb{P}^3\xrightarrow{\pi_D} D$. 

$\bullet$ $p$, $q$ are the projections: $H\xrightarrow{p} \mathbb{P}^{3\vee}$, (where $(P\in \Lambda)\mapsto \Lambda$), $\mathbf{P}\xrightarrow{q} H$.

$\bullet$ $i,j$ are the inclusions: $D \times \mathbb{P}^3\xrightarrow{i} \mathbf{B}\times \mathbb{P}^3$, $D\times \mathbb{P}^3\xrightarrow{j} \mathbf{P}\times \mathbb{P}^3$.

$\bullet$ Two universal families: 
\begin{enumerate}
    \item $U_{\mathscr{F}_1}$ on $H\times \mathbb{P}^3$ as the universal family of complexes $\mathscr{F}_1$. 
    \item $U_{\mathscr{O}_{\Lambda}}$ on $\mathbb{P}^{3\vee}\times \mathbb{P}^3$ as the universal family of planes in $\mathbb{P}^3$.
\end{enumerate}

\begin{Prop}{\label{extension1}}
There exists a universal family of extensions on $H$ of the form $$0\to U_{\mathscr{F}_1}\otimes \pi_H^* L\to U_{E} \to p^*( U_{\mathscr{O}_{\Lambda}})\to 0$$

where $L:=\mathscr{E}xt_{\pi_H}^1(p^*( U_{\mathscr{O}_{\Lambda}}), U_{\mathscr{F}_1})^*$ is a line bundle on $H$.
\end{Prop}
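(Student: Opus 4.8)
The plan is to realize $U_E$ as the middle term of a single \emph{universal extension}, obtained from one canonical class in a relative $\mathrm{Ext}^1$ group; this is the standard universal-extension construction, adapted to families of complexes rather than sheaves. Throughout write $\mathcal{A}:=U_{\mathscr{F}_1}$ and $\mathcal{B}:=p^*(U_{\mathscr{O}_{\Lambda}})$, regarded as objects on $H\times\mathbb{P}^3$ that are flat over $H$; the fibre of $\mathcal{A}$ over $(P,\Lambda)\in H$ is the complex $\mathscr{F}_1$ attached to that flag, and the fibre of $\mathcal{B}$ is the structure sheaf $\mathscr{O}_{\Lambda}$ of the associated plane. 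All relative $\mathrm{Ext}$-sheaves below are the cohomology sheaves of $R\mathscr{H}om_{\pi_H}(\mathcal{B},\mathcal{A})=R\pi_{H*}\,R\mathscr{H}om(\mathcal{B},\mathcal{A})$, and the formalism of base change and the local-to-global spectral sequence applies verbatim to objects of the derived category.

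First I would study the relative sheaves $\mathscr{E}xt^q_{\pi_H}(\mathcal{B},\mathcal{A})$. By Lemma \ref{commutative diagrams}(2), for \emph{every} plane $\Lambda'$, hence for every $(P,\Lambda)\in H$, one has $\mathrm{Hom}(\mathscr{O}_\Lambda,\mathscr{F}_1)=0$ and $\mathrm{Ext}^1(\mathscr{O}_\Lambda,\mathscr{F}_1)=\mathbb{C}$, and both dimensions are constant over $H$. The fibrewise vanishing of $\mathrm{Hom}$ forces $\mathscr{E}xt^0_{\pi_H}(\mathcal{B},\mathcal{A})=0$; since this is then the lowest relative $\mathrm{Ext}$-sheaf, base change for relative $\mathrm{Ext}$ shows that $\mathscr{E}xt^1_{\pi_H}(\mathcal{B},\mathcal{A})$ commutes with base change, and constancy of $\dim\mathrm{Ext}^1\equiv 1$ then shows it is locally free of rank one, i.e. a line bundle on $H$. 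This is precisely the line bundle whose dual defines $L$.

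Next I would extract the universal class. The local-to-global spectral sequence
$$E_2^{a,b}=H^a\bigl(H,\mathscr{E}xt^b_{\pi_H}(\mathcal{B},\mathcal{A}\otimes\pi_H^*L)\bigr)\ \Longrightarrow\ \mathrm{Ext}^{a+b}_{H\times\mathbb{P}^3}(\mathcal{B},\mathcal{A}\otimes\pi_H^*L),$$
together with the projection formula $\mathscr{E}xt^b_{\pi_H}(\mathcal{B},\mathcal{A}\otimes\pi_H^*L)\cong\mathscr{E}xt^b_{\pi_H}(\mathcal{B},\mathcal{A})\otimes L$ and the vanishing $\mathscr{E}xt^0_{\pi_H}(\mathcal{B},\mathcal{A})=0$ (which kills the term $E_2^{1,0}$), yields an isomorphism
$$\mathrm{Ext}^1_{H\times\mathbb{P}^3}(\mathcal{B},\mathcal{A}\otimes\pi_H^*L)\ \cong\ H^0\bigl(H,\mathscr{E}xt^1_{\pi_H}(\mathcal{B},\mathcal{A})\otimes L\bigr).$$
By the definition $L=\mathscr{E}xt^1_{\pi_H}(\mathcal{B},\mathcal{A})^{*}$, the sheaf on the right is canonically $\mathscr{O}_H$, whose $H^0$ contains the distinguished section $1$. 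Letting $\xi$ be the class corresponding to $1$, it determines the short exact sequence
$$0\to U_{\mathscr{F}_1}\otimes\pi_H^*L\to U_E\to p^*(U_{\mathscr{O}_{\Lambda}})\to 0.$$

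Finally I would verify that $\xi$ restricts to a generator on each fibre, so that $U_E$ is genuinely the asserted family. Under the base-change identification, the fibre of $\xi$ at $(P,\Lambda)$ is the image of $1\in(\mathscr{E}xt^1_{\pi_H}\otimes L)|_{(P,\Lambda)}=\mathbb{C}$ inside $\mathrm{Ext}^1(\mathscr{O}_\Lambda,\mathscr{F}_1)=\mathbb{C}$, which is nonzero; hence the restriction of the sequence to $\{(P,\Lambda)\}\times\mathbb{P}^3$ is the unique (up to scalar) nonsplit extension whose middle term is the stable object $E$ parametrised by that point. I expect the main obstacle to be the second step: establishing that $\mathscr{E}xt^1_{\pi_H}(\mathcal{B},\mathcal{A})$ is locally free of rank one, whose crucial inputs are the \emph{constancy} of $\dim\mathrm{Ext}^1(\mathscr{O}_\Lambda,\mathscr{F}_1)=1$ over all of $H$ and the fibrewise vanishing of $\mathrm{Hom}(\mathscr{O}_\Lambda,\mathscr{F}_1)$, both from Lemma \ref{commutative diagrams}(2). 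The twist by $L$ is forced precisely because this $\mathscr{E}xt^1$-bundle need not be trivial; twisting trivialises it and thereby guarantees the existence of a global, fibrewise-nonzero extension class.
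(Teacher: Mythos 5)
Your proposal is correct and follows essentially the same route as the paper: both rest on Lemma \ref{commutative diagrams}(2) (fibrewise $\mathrm{Hom}(\mathscr{O}_{\Lambda'},\mathscr{F}_1)=0$ and $\mathrm{Ext}^1(\mathscr{O}_{\Lambda'},\mathscr{F}_1)=\mathbb{C}$ for every plane) to conclude that the relative $\mathscr{E}xt^1$ is a line bundle, then twist by its dual $L$ and extract the canonical (identity) section as the universal extension class whose cone/middle term is $U_E$. Your version merely spells out the base-change and local-to-global spectral sequence details that the paper compresses into the statement that $R\mathscr{H}om(p^*(U_{\mathscr{O}_{\Lambda}})[-1], U_{\mathscr{F}_1}\otimes L)$ is a sheaf.
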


\begin{proof}
Let $L$ be the line bundle $L:=\mathscr{E}xt_{\pi_H}^1(p^*(U_{\mathscr{O}_{\Lambda}}),U_{\mathscr{F}_1})^*$ on $H$. From the part (2) in Lemma \ref{commutative diagrams} and our assumption, we have that 

$R\mathscr{H}om(p^*(U_{\mathscr{O}_{\Lambda}}), (U_{\mathscr{F}_1}\otimes L)[1])=R\mathscr{H}om((p^*(U_{\mathscr{O}_{\Lambda}})[-1], U_{\mathscr{F}_1}\otimes L)$ is a sheaf. 

There is then a canonical identity element:

$
\begin{array}{rl}
id & \in  H^0(H\times \mathbb{P}^3, R\mathscr{H}om(p^*U_{\mathscr{O}_{\Lambda}}[-1], U_{\mathscr{F}_1}\otimes \pi_{H*} L))\\
{}&=H^0(H\times \mathbb{P}^3, p^*U_{\mathscr{O}_{\Lambda}}^*\otimes L\otimes U_{\mathscr{F}_1}[1])\\
{} & = H^0(H,\pi_{H*}(p^*U_{\mathscr{O}_{\Lambda}}^*)\otimes \pi_{H*}(p^*U_{\mathscr{O}_{\Lambda}}^*\otimes U_{\mathscr{F}_1}[1])^*\otimes \pi_{H*}U_{\mathscr{F}_1}[1])\\
\end{array}
$

which gives the morphism $f_{id}:$

$$\to p^*(U_{\mathscr{O}_{\Lambda}})[-1]\xrightarrow{f_{id}} U_{\mathscr{F}_1}\otimes \pi^*_{H}L \to U_E\to $$

The cone $U_E$ from the triangle is the universal extension we want.
\end{proof}

There is a universal extension on $\mathbf{P}$ as well which follows from a similar construction.

\begin{Prop}{\label{extension2}}
There exists a universal family of extensions on $\mathbf{P}$ of the form 
$$0\to q^*(p^*U_{\mathscr{O}_{\Lambda}})\otimes \pi_P^*\mathscr{O}_{\mathbf{P}}(1)\to U_F \to q^*U_{\mathscr{F}_1}\to 0 $$
\end{Prop}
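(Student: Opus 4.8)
The plan is to mirror the construction of Proposition \ref{extension1}, the only structural difference being that the extensions $0\to \mathscr{O}_{\Lambda}\to E\to \mathscr{F}_1\to 0$ are now governed by the $9$-dimensional group $Ext^1(\mathscr{F}_1,\mathscr{O}_{\Lambda})$ of Lemma \ref{commutative diagrams}(3), so the parameter space is a projective bundle rather than the base itself. Concretely, I would set $\mathscr{V}:=\mathscr{E}xt^1_{\pi_H}(U_{\mathscr{F}_1},\,p^*U_{\mathscr{O}_{\Lambda}})$ and realize $\mathbf{P}$ as the projectivization $\mathbb{P}(\mathscr{V})$, with structure map $q\colon \mathbf{P}\to H$ and tautological inclusion $\mathscr{O}_{\mathbf{P}}(-1)\hookrightarrow q^*\mathscr{V}$; the twist $\pi_{\mathbf{P}}^*\mathscr{O}_{\mathbf{P}}(1)$ appearing in the statement (where $\pi_{\mathbf{P}}=\pi_P$ is the projection $\mathbf{P}\times\mathbb{P}^3\to\mathbf{P}$) is exactly the tautological twist produced by this construction.

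First I would check that $\mathscr{V}$ is locally free of rank $9$. By Lemma \ref{commutative diagrams}(3) every fiber has $Hom(\mathscr{F}_1,\mathscr{O}_{\Lambda})=0$ and $Ext^1(\mathscr{F}_1,\mathscr{O}_{\Lambda})=\mathbb{C}^9$, so the relative $\mathscr{E}xt^0_{\pi_H}$ vanishes and, by cohomology-and-base-change applied to the relative $R\mathscr{H}om$ complex, $\mathscr{E}xt^1_{\pi_H}$ is locally free of constant rank $9$ and commutes with base change. Here it is essential that $U_{\mathscr{O}_{\Lambda}}$ is pulled back along $p\colon H\to\mathbb{P}^{3\vee}$, since $Ext^1(\mathscr{F}_1,\mathscr{O}_{\Lambda'})=0$ for $\Lambda'\neq\Lambda$; thus $\mathscr{V}$ is assembled from precisely the plane $\Lambda$ encoded in each $\mathscr{F}_1$. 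I would then produce the universal extension class as the image of the tautological inclusion under
$$Hom_{\mathbf{P}}(\mathscr{O}_{\mathbf{P}}(-1),\,q^*\mathscr{V}) = H^0\bigl(\mathbf{P}\times\mathbb{P}^3,\ \mathscr{E}xt^1_{\pi_{\mathbf{P}}}(q^*U_{\mathscr{F}_1},\ q^*(p^*U_{\mathscr{O}_{\Lambda}})\otimes\pi_{\mathbf{P}}^*\mathscr{O}_{\mathbf{P}}(1))\bigr),$$
the identification using the base change of $\mathscr{V}$ along $q$ together with the projection formula, which give $q^*\mathscr{V}\otimes\mathscr{O}_{\mathbf{P}}(1)\cong\mathscr{E}xt^1_{\pi_{\mathbf{P}}}(q^*U_{\mathscr{F}_1},\,q^*(p^*U_{\mathscr{O}_{\Lambda}})\otimes\pi_{\mathbf{P}}^*\mathscr{O}_{\mathbf{P}}(1))$. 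Taking the cone of the corresponding morphism $q^*U_{\mathscr{F}_1}[-1]\to q^*(p^*U_{\mathscr{O}_{\Lambda}})\otimes\pi_{\mathbf{P}}^*\mathscr{O}_{\mathbf{P}}(1)$ yields $U_F$ and the desired short exact sequence.

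The step I expect to require the most care, and where this proof genuinely departs from Proposition \ref{extension1}, is that here $Ext^2(\mathscr{F}_1,\mathscr{O}_{\Lambda})=\mathbb{C}^{14}\neq 0$, so the relative $R\mathscr{H}om(q^*U_{\mathscr{F}_1},\,q^*(p^*U_{\mathscr{O}_{\Lambda}})\otimes\pi_{\mathbf{P}}^*\mathscr{O}_{\mathbf{P}}(1))$ is \emph{not} a sheaf concentrated in a single degree, and one cannot transcribe the ``$R\mathscr{H}om$ is a sheaf'' argument used before. What rescues the construction is that the universal-extension formalism over a projective bundle needs only the vanishing of relative $\mathscr{E}xt^0$ together with local freeness of relative $\mathscr{E}xt^1$; both are supplied by Lemma \ref{commutative diagrams}(3), and the higher $\mathscr{E}xt$ sheaves are irrelevant to extracting the $Ext^1$-universal class. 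To finish, I would verify universality by restricting the sequence to a slice $\{s\}\times\mathbb{P}^3$ over a point $s\in\mathbf{P}$ lying above $h\in H$: by the base-change isomorphism for $\mathscr{E}xt^1$, this restriction is exactly the extension of $\mathscr{F}_{1,h}$ by $\mathscr{O}_{\Lambda}$ whose class is the line in $Ext^1(\mathscr{F}_{1,h},\mathscr{O}_{\Lambda})$ parameterized by $s$, and flatness of $U_F$ over $\mathbf{P}$ follows since it is the cone of a morphism of $\mathbf{P}$-flat families whose fiberwise extensions are genuine non-split sequences in $\mathscr{A}_1$.
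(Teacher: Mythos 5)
Your construction is correct and is essentially the argument the paper intends: the paper gives no separate proof of Proposition \ref{extension2}, only the remark that it ``follows from a similar construction'' to Proposition \ref{extension1}, and your projective-bundle version of that construction --- the tautological sub-line-bundle of $q^*\mathscr{E}xt^1_{\pi_H}$, the identification of its sections with a global $Ext^1$ via the local-to-global spectral sequence (valid because $\mathscr{E}xt^0_{\pi}=0$, irrespective of the nonvanishing $Ext^2(\mathscr{F}_1,\mathscr{O}_{\Lambda})=\mathbb{C}^{14}$, which you rightly flag as the one point where the argument of Proposition \ref{extension1} does not transcribe verbatim), and passage to the cone --- is the standard way to carry it out. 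The only discrepancy is a harmless normalization: the paper realizes $\mathbf{P}$ as the projectivization of $\mathscr{E}xt_{\pi_H}^1(U_{\mathscr{F}_1}\otimes \pi_H^*L,U_{\mathscr{O}_{\Lambda}})^*$ rather than of your untwisted $\mathscr{V}$, so the two candidates for $\mathscr{O}_{\mathbf{P}}(1)$ differ by the pullback of a line bundle from $H$, a convention the paper itself elects to suppress.
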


Next, we show that $D$ is embedded into $\mathbf{P}$.

The locus $H$ parameterizes objects $E$ which fit into the short exact sequence: $0\to \mathscr{F}_1 \to E \to \mathscr{O}_{\Lambda} \to 0$. 
The results in diagram (2) from Appendix \ref{auxiliary diagrams} gives the following diagram: 
\[
\begin{tikzcd}
&&&\mathbb{C}^3\arrow{d}&\\
0\arrow{r}&T_{E|H}=\mathbb{C}^5\arrow{r}\arrow{d}&T_{E|\mathscr{M}_2}=\mathbb{C}^{15}\arrow{r}\arrow{d}{\cong}&N_{H|\mathscr{M}_2}=\mathbb{C}^{10}\arrow{r}\arrow{d}&{0}\\
0\arrow{r}&Ker(\phi)=\mathbb{C}^8\arrow{r}\arrow{d}&Ext^1(E,E)=\mathbb{C}^{15}\arrow{r}{\phi}&Ext^1(\mathscr{F}_1,\mathscr{O}_{\Lambda})=\mathbb{C}^{9}\arrow{r}&{}\\
{}&\mathbb{C}^3&&&
\end{tikzcd}
\]
in which $T_{E|H}$ denotes the tangent bundle at the point $E$ in $H$, and $N_{H|\mathscr{M}_1}$ denotes the normal bundle of $H$ in $\mathscr{M}_1$ (at $E$).

The global version of the diagram is the following:

\[
\begin{tikzcd}
0\arrow{r}&\mathscr{T}_{H|K_{(2,3)}}\arrow{r}\arrow{d}&\mathscr{T}_{\mathscr{M}_2}\arrow{r}\arrow{d}{KS}&\mathscr{N}_{H|\mathscr{M}_2}\arrow{r}\arrow{d}&{0}\\
0\arrow{r}&\mathscr{K}er\arrow{r}&\mathscr{E}xt_{\pi_H}^1(U_E,U_E)\arrow{r}&\mathbf{P}=\mathscr{E}xt_{\pi_H}^1(U_{\mathscr{F}_1}\otimes \pi_H^*L,U_{\mathscr{O}_{\Lambda}})\arrow{r}&{}\\
\end{tikzcd}
\]

This implies that $N_{H|K_{(2,3)}}=\mathbb{C}^7\hookrightarrow Ext^1(\mathscr{F}_1,\mathscr{O}_{\Lambda})=\mathbb{C}^{9}$ for all points $E\in H$. Correspondingly, we have the embedding 
$D=\mathbb{P}(\mathscr{N}_{H|K_{(2,3)}}^*)\hookrightarrow \mathbf{P}:=\mathbb{P}(\mathscr{E}xt_{\pi_H}^1(U_{\mathscr{F}_1}\otimes \pi_H^*L,U_{\mathscr{O}_{\Lambda}})^*)$.

The following result shows that the dimension of the moduli space $\mathscr{M}_3$ along the exceptional divisor $D$ is one more than the dimension of $\mathbf{P}\backslash D$. This indicates the expected gluing at least set theoretically.

\begin{Prop}
If $F\in \mathbb{P}(\mathscr{N^*})$, then the morphism $Ext^1(F,F)\to Ext^1(\mathscr{O}_{\Lambda},\mathscr{F}_1)=\mathbb{C}$ (from diagram (4) in Appendix \ref{auxiliary diagrams}) is non zero. If $F\in \mathbf{P}\backslash \mathbb{P}(\mathscr{N^*})$, then the morphism  $Ext^1(F,F)\to Ext^1(\mathscr{O}_{\Lambda},\mathscr{F}_1)=\mathbb{C}$ is zero.
\end{Prop}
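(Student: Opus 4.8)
The plan is to read off the map in question from diagram (4) as the composite
$$Ext^1(F,F)\xrightarrow{\ a^*\ }Ext^1(\mathscr{O}_{\Lambda},F)\xrightarrow{\ b_*\ }Ext^1(\mathscr{O}_{\Lambda},\mathscr{F}_1)=\mathbb{C},$$
where $a\colon\mathscr{O}_{\Lambda}\hookrightarrow F$ and $b\colon F\twoheadrightarrow\mathscr{F}_1$ are the maps of the defining sequence $0\to\mathscr{O}_{\Lambda}\to F\to\mathscr{F}_1\to0$, with extension class $e\in Ext^1(\mathscr{F}_1,\mathscr{O}_{\Lambda})=\mathbb{C}^9$. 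Since $F\in\mathbf{P}$ lies on $D=\mathbb{P}(\mathscr{N}^*)$ exactly when $e$ lies in the subspace $\mathscr{N}=\mathscr{N}_{H|K_{(2,3)}}=\mathbb{C}^{7}\subset Ext^1(\mathscr{F}_1,\mathscr{O}_{\Lambda})$, the whole statement amounts to showing $\psi_F\neq0\iff e\in\mathscr{N}$. I would also record the second factorization $\psi_F=a^*\circ b_*$ through $Ext^1(F,\mathscr{F}_1)$, which will give the opposite containment.

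First I would carry out the homological reduction. Resolving $\mathscr{O}_{\Lambda}$ by $0\to\mathscr{O}_{\mathbb{P}^3}(-1)\to\mathscr{O}_{\mathbb{P}^3}\to\mathscr{O}_{\Lambda}\to0$ yields $Hom(\mathscr{O}_{\Lambda},\mathscr{O}_{\Lambda})=\mathbb{C}$, $Ext^1=\mathbb{C}^3$, and crucially $Ext^2(\mathscr{O}_{\Lambda},\mathscr{O}_{\Lambda})=0$. Feeding the defining sequence into $Hom(\mathscr{O}_{\Lambda},-)$ then makes $b_*\colon Ext^1(\mathscr{O}_{\Lambda},F)\to Ext^1(\mathscr{O}_{\Lambda},\mathscr{F}_1)$ surjective, because the next connecting map lands in $Ext^2(\mathscr{O}_{\Lambda},\mathscr{O}_{\Lambda})=0$; feeding it into $Hom(-,\mathscr{F}_1)$ shows that the image of $a^*\colon Ext^1(F,\mathscr{F}_1)\to Ext^1(\mathscr{O}_{\Lambda},\mathscr{F}_1)$ is the kernel of the Yoneda connecting map $\mu\colon Ext^1(\mathscr{O}_{\Lambda},\mathscr{F}_1)\to Ext^2(\mathscr{F}_1,\mathscr{F}_1)$, $\eta\mapsto\eta\circ e$. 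Combining the two factorizations gives $\mathrm{im}(\psi_F)\subseteq\ker(\mu)$, and a chase of diagram (4) with its recorded dimensions should upgrade this to $\mathrm{im}(\psi_F)=\ker(\mu)$. As the target is one-dimensional with generator $\eta$, this reduces the proposition to the single assertion $\psi_F\neq0\iff\eta\circ e=0$ in $Ext^2(\mathscr{F}_1,\mathscr{F}_1)$.

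The crux is then to compute the linear map $\eta\circ(-)\colon Ext^1(\mathscr{F}_1,\mathscr{O}_{\Lambda})=\mathbb{C}^9\to Ext^2(\mathscr{F}_1,\mathscr{F}_1)$ and to identify its kernel with $\mathscr{N}$. Here I would use the explicit presentation $\mathscr{F}_1=[\mathscr{O}^2_{\mathbb{P}^3}(-3)\xrightarrow{M}\mathscr{O}^3_{\mathbb{P}^3}(-2)\xrightarrow{N}\mathscr{O}_{\mathbb{P}^3}(-1)]$ together with the identification, already made in the description of $\mathbf{P}$, of $Ext^1(\mathscr{F}_1,\mathscr{O}_{\Lambda})=\mathbb{C}^9$ with the plane cubics in $\Lambda$ through $P$; thus $e$ is literally a cubic form vanishing at $P$, and $\eta$ is the canonical reverse extension class built from $\mathscr{H}^{-1}(\mathscr{F}_1)=\mathscr{O}_{\Lambda}(-3)$ and $\mathscr{H}^0(\mathscr{F}_1)=\mathbb{C}_P$. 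Tracing $\eta\circ e$ through the cohomology of $\mathscr{F}_1$ localizes the product at $P$ and expresses it through the $1$-jet of $e$ at $P$; I expect the resulting map to have rank $2$, with kernel exactly the cubics through $P$ that are in addition singular at $P$. Since passing through $P$ is already imposed, "singular at $P$" is two further linear conditions, cutting out the codimension-$2$ subspace $\mathscr{N}=\mathbb{C}^7\subset\mathbb{C}^9$, and this yields $\psi_F\neq0\iff\eta\circ e=0\iff e\in\mathscr{N}\iff F\in D$, with the complementary case $F\in\mathbf{P}\setminus D$ immediate.

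The main obstacle is precisely this local cup-product computation: establishing that $\ker(\eta\circ(-))$ equals $\mathscr{N}$ rather than merely containing or being contained in it. The inclusion $\mathrm{im}(\psi_F)\subseteq\ker(\mu)$, hence the vanishing $\psi_F=0$ off $\mathscr{N}$ once the rank is known, is formal from the long exact sequences above; the delicate half is the surjectivity $\mathrm{im}(\psi_F)=\ker(\mu)$ for $e\in\mathscr{N}$. I would secure it by exhibiting, for such $e$, an explicit class in $Ext^1(F,F)$ that maps onto $\eta$ — the deformation reflecting that on $D$ the object $F$ simultaneously carries the reversed filtration type $0\to\mathscr{F}_1\to F\to\mathscr{O}_{\Lambda}\to0$ that makes it a boundary point shared with the twisted-cubic component — and checking through diagram (4) that $b_*$ does not annihilate it.
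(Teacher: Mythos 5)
Your formal setup is sound and is consistent with what the paper intends: the map in question is the composite $Ext^1(F,F)\to Ext^1(\mathscr{O}_{\Lambda},F)\to Ext^1(\mathscr{O}_{\Lambda},\mathscr{F}_1)$, the surjectivity of the second arrow follows from $Ext^2(\mathscr{O}_{\Lambda},\mathscr{O}_{\Lambda})=0$, and the other factorization through $Ext^1(F,\mathscr{F}_1)$ shows the image lands in the kernel of the connecting map $\eta\mapsto\eta\circ e$ into $Ext^2(\mathscr{F}_1,\mathscr{F}_1)$. Be aware that the paper itself gives no independent argument: it defers entirely to Prop.\ 4.10 of \cite{MR3803142} together with the Ext computations of diagram (4). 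The chase intended there is driven by the jump of $Ext^1(E,\mathscr{F}_1)$ from $\mathbb{C}^5$ to $\mathbb{C}^6$ across $D$, which, via the column $Ext^1(\mathscr{F}_1,\mathscr{F}_1)\to Ext^1(E,\mathscr{F}_1)\to Ext^1(\mathscr{O}_{\Lambda},\mathscr{F}_1)\to Ext^2(\mathscr{F}_1,\mathscr{F}_1)$, is literally equivalent to your condition $\eta\circ e=0$; so your reformulation is faithful to the intended argument.

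The gap is that every load-bearing step after this reduction is deferred rather than carried out. First, the identification $\ker\bigl(e\mapsto\eta\circ e\bigr)=\mathscr{N}\subset Ext^1(\mathscr{F}_1,\mathscr{O}_{\Lambda})=\mathbb{C}^9$ is only ``expected,'' yet this equality \emph{is} the proposition: you need $\ker\subseteq\mathscr{N}$ for the vanishing off $\mathbb{P}(\mathscr{N}^*)$ and $\mathscr{N}\subseteq\ker$ for the non-vanishing on it, and neither inclusion is established. Since the paper defines $\mathscr{N}$ as the image of $\mathscr{N}_{H|K_{(2,3)}}$ under the Kodaira--Spencer map, matching it against the $1$-jet at $P$ of the cubic representing $e$ requires an argument you do not sketch (relating deformations of the Kronecker module normal to $H$ to the local structure of $\eta$ at $P$). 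Second, the upgrade from $\mathrm{im}(\psi_F)\subseteq\ker(\mu)$ to equality when $\eta\circ e=0$ --- i.e., that a lift of $\eta$ to $Ext^1(F,\mathscr{F}_1)$ can be chosen in the image of $Ext^1(F,F)$ --- is also postponed; it does follow from the dimensions recorded in diagram (4) ($Hom(E,\mathscr{F}_1)=\mathbb{C}$, $Ext^1(E,\mathscr{O}_{\Lambda})=\mathbb{C}^8$, $Ext^1(E,E)=\mathbb{C}^{14}$, $Ext^1(E,\mathscr{F}_1)=\mathbb{C}^6$ on $D$), but those dimensions already encode the dichotomy being proved, so you must independently determine on which stratum each holds. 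As written, the proposal establishes neither implication of the statement; completing either the local cup-product computation or the computation of $Ext^1(E,\mathscr{F}_1)$ on the two strata is unavoidable.
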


\begin{proof}
Same with Prop $4.10$ in \cite{MR3803142} using the results in diagram (4) in Appendix \ref{auxiliary diagrams}.

\end{proof}

Finally in this section, we show the gluing of $\mathbf{B}$ and $\mathbf{P}$ using the Elementary modification. 

\begin{enumerate}
    \item {Construct a universal family $\mathscr{K}$ on the blow up.}

There are three distinguished triangles (a),(b),(c) involved (the third one is from the composition of the first two). The octahedral axiom would give the fourth triangle.

$\mathscr{M}_2$ is a indeed a quiver moduli of the  dimension vector $[1694]$. Proposition $5.3$ in \cite{King_1994} implies that it is a fine moduli when $t\in \mathbb{Q}$. So $\mathscr{M}_2$ and $\mathscr{M}_3$ are both fine moduli spaces. Denote the universal family of representation on $\mathscr{M}_2$ by $U_2$. When restricting $U_2$ to $H$, there is a line bundle $L_1$ on $H$ such that $U_E\cong (U_2|H)\otimes \pi_H^*(L_1)$. To reduce the complexity of notations, we abuse the notation a bit by assuming that $L_1$ is trivial.

\begin{enumerate}
    \item This is from pulling back the extension in Prop \ref{extension1} from $H$ to $D$ and then pushforward to the blow-up $\mathbf{B}$. 
    
    $$\to i_*b_H^*(U_{\mathscr{F}_1}\otimes \pi_H^* L)\to i_*b_H^*(U_{E}) \xrightarrow{u} i_*b_H^*(p^*( U_{\mathscr{O}_{\Lambda}}))\to $$
    
    \item 
    $$\to b^*(U_2(-D\times \mathbb{P}^3))\to b^*(U_2) \xrightarrow{r} i_*(b^*U_2)_{D\times \mathbb{P}^3}\to $$

    \item Define $\mathscr{K}$ from the following distinguished triangle: ($\mathscr{K}$ will be the desired family on $\mathbf{B}$ for the gluing.)
    
    $$\to \mathscr{K} \to b^*U_2\xrightarrow{u\circ r} i_*b_H^*(p^*( U_{\mathscr{O}_{\Lambda}})) \to $$

    \item Apply the octahedral axiom, we have the following triangle:
    
    $$\to b^*(U_2(-D\times \mathbb{P}^3)) \to \mathscr{K} \to i_*b_H^*(U_{\mathscr{F}_1}\otimes \pi_H^* L)\to $$ 
    
    $\mathscr{K}$ is flat because it's a complex of vector bundles.

\end{enumerate}

\item Glue $\mathbf{B}$ to the component $\mathbf{P}$ using the family $\mathscr{K}$. We will apply the octahedral axiom again to triangles (a')$\sim$ (c') below. 

\begin{enumerate}[label=(\alph*')]

    \item  $$\to \mathscr{K}(-D\times \mathbb{P}^3)\to \mathscr{K}\xrightarrow{r} i_*Li^*(\mathscr{K})\to  $$
    
    \item Define a family $\mathscr{K}'$ from the following triangle:
    $$\to \mathscr{K}'\to Li^*(\mathscr{K})\to b_H^*(U_{\mathscr{F}_1}\otimes \pi_H^*L)\to  $$
    Then push it forward to $\mathbf{B}$ by $i_*$:
    $$\to i_*\mathscr{K}'\to i_*Li^*(\mathscr{K})\xrightarrow{v} i_*b_H^*(U_{\mathscr{F}_1}\otimes \pi_H^*L)\to  $$
    
    \item $$b^*U_2(-D\times \mathbb{P}^3) \to \mathscr{K} \xrightarrow{v\circ r}i_*b_H^*(U_{\mathscr{F}_1}\otimes \pi_H^*L)$$

\item Apply the octahedral axiom, and we have the triangle:

$$\to \mathscr{K}(-D\times \mathbb{P}^3)\to b^*U_2(-D\times \mathbb{P}^3) \to i_*\mathscr{K}' \to $$
    
\end{enumerate}

As desired, we have the following isomorphism, and this completes the proof that $\mathbf{B}$ is glued to $\mathbf{P}$ along the exceptional divisor algebraically.

\[
\begin{array}{ll}
    \mathscr{K}'&\cong  b_H^*p^*(U_{\mathscr{O}_{\Lambda}})\otimes \mathscr{O}_{D\times \mathbb{P}^3}(-D\times \mathbb{P}^3)\\ {}&\cong  b_H^*p^*(U_{\mathscr{O}_{\Lambda}})\otimes \pi_D^* \mathscr{O}_{\mathbb{P}(\mathscr{N}^*_{H|K_{(2,3)}})}(1)\\
    {}&\cong b_H^*p^*(U_{\mathscr{O}_{\Lambda}})\otimes \pi_{\mathbf{P}}^* \mathscr{O}_{\mathbf{P}}(1)\\ 
\end{array}
\]

\end{enumerate}

\section{An example of an actual wall built up from pieces}

We have shown the two walls for the class $v=(0,0,3,-5)$ in the $(t-u)-$plane in section $6$. These two walls, which are expected to be actual walls, do not intersect.
However, it is not always the case on a threefold (\cite{MR3597844} \cite{jardim2019walls}) as how they behave on a surface. We give a counter-example in this section. 

Let $C\subset \mathbb{P}^3$ be a rational quartic curve. The wall for $\mathscr{O}_C$ in the $(t-u)-$plane is expected to be the outermost parts of the following two numerical walls.

$$W_1: \quad 0\to \mathscr{O}_{\mathbb{P}^3}\to \mathscr{O}_{C} \to \mathscr{I}_{C}[1]\to 0$$

$$W_2: \quad 0\to \mathscr{O}_{Q}\to \mathscr{O}_{C} \to \mathscr{I}_{C/Q}[1] \to 0$$ 
where $Q\subset \mathbb{P}^3$ is a quadric surface $Q\subset \mathbb{P}^3$ containing $C$.

\begin{figure}[ht]
\begin{tikzpicture}[scale=1.2]
\begin{axis}[axis lines=middle ,xmin=-1.5,xmax=1,ymin=-1,ymax=1]
\draw[line width=1pt, color=red] (-1.321,0) to[out=90,in=180] (-0.494,0.858) to[out=0,in=140] (0.189,0.608) to[out=320, in=90] (0.483,0);
\draw[line width=1pt, color=red] (-1.321,0) to[out=270,in=180] (-0.494,-0.858) to[out=0,in=220] (0.189,-0.608) to[out=40, in=270] (0.483,0);
\draw[line width=1pt, color=violet] (0,1) to[out=300,in=120] (0.189,0.608) to[out=300 ,in= 115](0.35,0.28) to[out= 295,in=90] (0.414,0) to[out=270, in=65] (0.35,-0.28)  to[out=245, in=60] (0.189,-0.608) to [out=240, in=60] (0,-1);
\draw[line width=0.8pt, color=blue] (-0.25,0)circle(0.75);

\draw[line width=0.8pt, color=brown, rounded corners](0.186,1) to (0.189,0.608) to[out=275,in=90] (0.268,0) to[out=270,in=85] (0.189,-0.608) to (0.186,-1);

\filldraw [black] (0.189,0.608) circle (1.5pt); 
\node[color=black] at (0.27,0.7) {$S$};
\node[color=black] at (0.4,0.54) {$P$};
\node[color=black] at (0.58,0.1) {$T$};
\node[color=black] at (0.7,0.4) {$\mathscr{Q}_1$};
\node[color=black] at (-1.3,0.75) {$W_1$};
\node[color=black] at (-0.75,0.3) {$W_2$};
\node[color=black] at (0.1,0.9) {$W_3$};
\node[color=black] at (0.15,0.15) {$W_4$};
\draw[line width=1pt,color=green,dashed] (0,0) to[out=90,in=215] (0.5,0.7) to[out=325,in=90] (1,0);
\filldraw [black] (0.28,0.54) circle (1.5pt);
\filldraw [black] (0.5,0) circle (1.5pt);
\end{axis}
\end{tikzpicture}
\caption{}
\label{two pieces walls}
\end{figure}

In Figure \ref{two pieces walls}, $S$ is the intersection of $W_1$ and $W_2$, and the green region $\mathscr{Q}_1$ is the quiver region for $t\in [0,1]$. $T=0.5$ is the right endpoint of $W_2$, and $P$ is the intersection of $W_2$ with the boundary of $\mathscr{Q}_1$. There are two more walls, $W_3$ (purple) and $W_4$ (brown), that are defined by the short exact sequences $0\to \mathscr{O}_{\mathbb{P}^3} \to \mathscr{O}_{Q}\to \mathscr{O}_{\mathbb{P}^3}(-2)[1]\to 0$ and $0\to \mathscr{O}_{\mathbb{P}^3}(-2)[1] \to \mathscr{I}_{C}[1]\to \mathscr{I}_{C|Q}[1]\to 0$ respectivesly. A direct computation shows that $W_3$ and $W_4$ go through $S$ as well.

In fact, it's easy to see that $W_2$ is not an actual wall to the left of $S$ because $\mathscr{O}_Q$ is unstable, destabilized by $\mathscr{O}_{\mathbb{P}^3}$. Similarly, $W_1$ is not an actual wall to the right of $S$ because $\mathscr{I}_C[1]$ is unstable, destabilized by $\mathscr{O}_{\mathbb{P}^3}(-2)[1]$.

We prove the following property which implies that the actual wall of $\mathscr{O}_C$ is built up from more than one numerical wall. 

\begin{Prop}
The numerical wall $W_2$, defined by $0\to \mathscr{O}_{Q}\to \mathscr{O}_{C} \to \mathscr{I}_{C/Q}[1] \to 0$, is an actual wall in the quiver region $\mathscr{Q}_1$ and a pseudo wall to the left of $S$.
\end{Prop}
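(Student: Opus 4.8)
The plan is to treat the two halves of the claim by different mechanisms, since they lie on opposite sides of the triple point $S$, where $W_1$, $W_2$, $W_3$ and $W_4$ all meet.

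For the pseudo-wall assertion to the left of $S$, the mechanism is the instability of the would-be subobject $\mathscr{O}_Q$. First I would record that $W_3$, defined by $0\to \mathscr{O}_{\mathbb{P}^3}\to \mathscr{O}_Q\to \mathscr{O}_{\mathbb{P}^3}(-2)[1]\to 0$, is precisely the numerical wall on which $\mathscr{O}_{\mathbb{P}^3}$ and $\mathscr{O}_Q$ acquire equal slope $\lambda_{t,u}$, and confirm by the direct slope computation already indicated in the text that $W_3$ passes through $S$. The key local step is then to evaluate the sign of $\lambda_{t,u}(\mathscr{O}_{\mathbb{P}^3})-\lambda_{t,u}(\mathscr{O}_Q)$ at a convenient test point of $W_2$ strictly to the left of $S$ (for instance near the left endpoint of the semicircle at $u=0$); this shows that there we are on the side of $W_3$ where $\phi(\mathscr{O}_{\mathbb{P}^3})>\phi(\mathscr{O}_Q)$, so the inclusion $\mathscr{O}_{\mathbb{P}^3}\hookrightarrow \mathscr{O}_Q$ is destabilizing and $\mathscr{O}_Q$ is not semistable. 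Since $\phi(\mathscr{O}_Q)=\phi(\mathscr{O}_C)$ along $W_2$, the composite $\mathscr{O}_{\mathbb{P}^3}\hookrightarrow \mathscr{O}_Q\hookrightarrow \mathscr{O}_C$ then has $\phi(\mathscr{O}_{\mathbb{P}^3})>\phi(\mathscr{O}_C)$, so $\mathscr{O}_C$ is already unstable at such points. Hence no object of class $v(\mathscr{O}_C)$ changes stability across $W_2$ here---the genuine outer wall is $W_1$---and $W_2$ is a pseudo wall to the left of $S$.

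For the actual-wall assertion inside $\mathscr{Q}_1$, I would exploit that throughout the quiver region the heart $\mathscr{A}_{t,u}$ is equivalent to representations of the Beilinson-type quiver on the exceptional collection $\{\mathscr{O}_{\mathbb{P}^3}(-3-n)[3],\dots,\mathscr{O}_{\mathbb{P}^3}(-n)\}$, so that $\sigma_{t,u}$-stability becomes King $\theta$-stability for the weights read off from $\lambda_{t,u}$, and the category has finite length. After computing the dimension vectors of $\mathscr{O}_Q$, $\mathscr{O}_C$ and $\mathscr{I}_{C/Q}[1]$ in this heart, I would check by the finite combinatorics of admissible sub-dimension-vectors that at a point of $W_2\cap\mathscr{Q}_1$---which, being to the right of $S$, lies on the non-destabilizing side of $W_3$---both $\mathscr{O}_Q$ and $\mathscr{I}_{C/Q}[1]$ are $\theta$-stable of the same phase as $\mathscr{O}_C$. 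Finite length then presents $\mathscr{O}_C$ as a strictly semistable object with Jordan--H\"older factors $\mathscr{O}_Q$ and $\mathscr{I}_{C/Q}[1]$, and a sign computation of $\lambda_{t,u}(\mathscr{O}_Q)-\lambda_{t,u}(\mathscr{O}_C)$ just off $W_2$ (or, alternatively, the Gieseker-chamber stability of $\mathscr{O}_C$ from Theorem \ref{Euler equals Gieseker}) shows the stability of $\mathscr{O}_C$ genuinely jumps. Thus $W_2$ is an actual wall throughout $\mathscr{Q}_1$.

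The hardest part will be the stability verification inside the quiver region: producing explicit presentations of $\mathscr{O}_Q$ and $\mathscr{I}_{C/Q}[1]$ as quiver representations and ruling out every potentially destabilizing sub-dimension-vector, while simultaneously keeping enough control of the geometry to be certain that the relevant arc of $W_2$ lies inside $\mathscr{Q}_1$ and on the correct side of $W_3$. Pinning down the positions of $S$ and of the entry point $P$ of $W_2$ into $\mathscr{Q}_1$, relative to $W_3$ and $W_4$, is the delicate bookkeeping that makes the dichotomy clean; by contrast the pseudo-wall half is essentially forced once the side of $W_3$ is identified.
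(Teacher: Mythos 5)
Your proposal is correct and follows essentially the same route as the paper: to the left of $S$ one checks $\lambda_{t,u}(\mathscr{O}_{\mathbb{P}^3})>\lambda_{t,u}(\mathscr{O}_Q)$ so that $\mathscr{O}_Q$ (and hence the wall) is killed by $\mathscr{O}_{\mathbb{P}^3}$, while inside $\mathscr{Q}_1$ one verifies stability of $\mathscr{O}_Q$ and $\mathscr{I}_{C/Q}[1]$ by the finite check of sub-dimension-vectors in the quiver heart. The only cosmetic difference is that the paper anchors this check at the endpoint $T$ on the $t$-axis (dimension vectors $[1,4,7,4]$ and $[0,3,4,1]$, with the $\mathscr{I}_{C/Q}[1]$ case reduced to $H^0(\mathbb{P}^3,\mathscr{I}_{C/Q}(2))=0$) and then propagates along the arc $\overset{\frown}{PT}$ via the identification of $\lambda^{Euler}_t$ with $\lambda^{Euler}_{t,u}$, rather than working directly at an arbitrary point of $W_2\cap\mathscr{Q}_1$.
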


\begin{proof}
Recall that the stability condition in the $(t,u)-$plane is the pair $\sigma_{t,u}=(\mathscr{B}_t, z_{t,u}=-\chi_t+\frac{u^2}{2}\chi''_t + i \cdot \chi'_t)$. Let $\lambda_{t,u}$ be the slope function of $Z_{t,u}$. It's straightforward to check that $\lambda_{t,u}(\mathscr{O}_{\mathbb{P}^3})>\lambda_{t,u}(\mathscr{O}_{Q})$ for all $(t,u)\in W_2$ to the left of $S$ where $W_2$ is not an actual wall. 

Next, we prove that $\mathscr{O}_{Q}$ and $\mathscr{I}_{C/Q}[1]$ are stable in $\mathscr{Q}_1$.

\begin{enumerate}[label=(\alph*)]
\item $\mathscr{O}_{Q}$ and $\mathscr{I}_{C/Q}[1]$ are stable at $T$.

We use the Euler stability $\sigma_t=(\mathscr{A}_t,Z_t=\chi'_t+i\cdot \chi_t)$ with slope $\lambda^{Euler}_t=-\frac{\chi'_t}{\chi_t}$ at $T$. (It's the same with the stability condition $\sigma_{t,0}$ for $\mathscr{B}_t$.)

$\bullet$
For $\mathscr{O}_{Q}$, it has dimension vector $[1,4,7,4]$ in $\mathscr{A}_1$. Without loss of generality, assume $Q$ is defined by the equation $x_0x_3-x_1x_2=0$. Then the presentation of $\mathscr{O}_{Q}$ is given as follows,

$$\left[\mathscr{O}_{\mathbb{P}^3}(-4) \overset{M}{\to} \mathscr{O}_{\mathbb{P}^3}(-3)^4\overset{N}{\to} \mathscr{O}_{\mathbb{P}^3}(-2)^7 \overset{S}{\to} \mathscr{O}_{\mathbb{P}^3}^4(-1)\right]\cong \mathscr{O}_Q$$
where $N=\begin{pmatrix}x_2&-x_1&x_0&0&0&0&0\\ x_3&0&0&-x_1&x_0&0&0\\ 0&x_3&0&-x_2&0&x_0&0\\ 0&0&x_3&0&-x_2&x_1&0\\  \end{pmatrix}$, $S=\begin{pmatrix}-x_1& x_0&0&0\\ -x_2&0& x_0&0\\ 0&-x_2& x_1&0\\ -x_3&0&0& x_0\\ 0&-x_3&0& x_1\\ 0&0&-x_3& x_2\\ x_3&-x_2&0&0\\ \end{pmatrix}$,
and $M=\begin{pmatrix}-x_3& x_2&-x_1&x_0\end{pmatrix}$.

Stability of $\mathscr{O}_Q$ follows from checking the slopes of all its subcomplexes. We reduce the complexity in the following way. The inclusion $\mathscr{O}_{\mathbb{P}^3} \hookrightarrow \mathscr{O}_{Q} $ exists in both $\mathscr{B}_t$ ($t\in (-0.5774, 0.5774)$) and $\mathscr{A}_1$, in which the presentation of $\mathscr{O}_{\mathbb{P}^3}$ is its Koszul resolution (dim($\mathscr{O}_{\mathbb{P}^3}$)$=[1,4,6,4]$).

From the stability of $\mathscr{O}_{\mathbb{P}^3}$ in $\mathscr{A}_1$, we only need to check the subcomplexes of $\mathscr{O}_{Q}$ that are not subcomplexes of $\mathscr{O}_{\mathbb{P}^3}$. They are given as follows (in dimension vector): $[0143], [0154], [0164], [0174], [0264], [0274], [0374]$.
    
A direct computation shows that none of those can destabilize $\mathscr{O}_Q$ at $(0.5,0)$.

$\bullet$
For $\mathscr{I}_{C/Q}[1]$, its dimension vector in $\mathscr{A}_1$ is $[0,3,4,1]$. It's easy to check that a destabilizing subobject must have dimension vector $[0,a,b,0]$, where $a=0,...,3$ and $b=0,...,4$. In other words, if we prove that the dimension of a subcomplex must have a $"1"$ in the first position from the right, i.e. $[0,a,b,1]$, then $\mathscr{I}_{C/Q}[1]$ would be stable.

It is sufficient to show that there is no sub complex with dimension vector $[0,0,1,0]$. Suppose $[0,0,1,0]=\mathscr{O}_{\mathbb{P}^3}(-2)[1]$ is a sub complex, then there is a non-zero morphism $\mathscr{O}_{\mathbb{P}^3}(-2)[1] \to \mathscr{I}_{C/Q}[1]$. 
But we have a contradiction that 
$$Hom(\mathscr{O}_{\mathbb{P}^3}(-2)[1], \mathscr{I}_{C/Q}[1]) = Hom(\mathscr{O}_{\mathbb{P}^3}(-2), \mathscr{I}_{C/Q}) = H^0(\mathbb{P}^3, \mathscr{I}_{C/Q}(2))=0$$ This proves that $\mathscr{I}_{C/Q}[1]$ is stable in $\mathscr{A}_1$.

\item $\mathscr{O}_{Q}$ and $\mathscr{I}_{C/Q}[1]$ are stable along $\overset{\frown}{PT}$ in the quiver region. 

In the quiver region $\mathscr{Q}_1$, the stability condition $\sigma_{t,u}=(\mathscr{B}_t, Z_{t,u}=-\chi_t+\frac{u^2}{2}\chi''_t+ i \cdot \chi'_t)$ has essentially the same (up to a shift by $[1]$) slicings with the stability condition $\sigma^{Euler}_{t,u} := (\mathscr{A}_t, Z^{Euler}_{t,u}=\chi'_t+i \cdot(\chi_t-\frac{u^2}{2}\chi''_t))$. Let $\lambda^{Euler}_{t,u}:=-\frac{\chi'_t}{\chi_t-\frac{u^2}{2}\chi''_t}$ be the corresponding slope function. It's straightforward to check that the proof in step (a) holds if we replace $\lambda^{Euler}_t$ by $\lambda^{Euler}_{t,u}$ in $\mathscr{Q}_1$, and this proves the claim. 

\end{enumerate}

\end{proof}

\begin{Rem}
We expect that $0\to \mathscr{O}_{\mathbb{P}^3}\to \mathscr{O}_{Q} \to \mathscr{O}_{\mathbb{P}^3}(-2)[1]\to 0$ and $0\to\mathscr{O}_{\mathbb{P}^3}(-2)[1] \to \mathscr{I}_C[1] \to \mathscr{I}_{C|Q}[1]   \to 0$ are the unique wall for $\mathscr{O}_Q$ and $\mathscr{I}_{C|Q}$ in the $(t-u)$ plane, and the outermost parts of $W_1$ and $W_2$ build up the actual wall of $\mathscr{O}_{C}$.
\end{Rem}

\appendix

\section{}
\subsection{Presentation and subcomplexes of $\mathscr{I}_{P/\Lambda}(1)$}{\label{presentation and subcomplexes}}

The sequence below is the presentation of $\mathscr{I}_{P/\Lambda}(1)$ in $\mathscr{A}_1$. 

$$\left[\mathscr{O}^2_{\mathbb{P}^3}(-4)\xrightarrow{M}\mathscr{O}^8_{\mathbb{P}^3}(-3)\xrightarrow{N}\mathscr{O}^{11}_{\mathbb{P}^3}(-2)\xrightarrow{S}\mathscr{O}^5_{\mathbb{P}^3}(-1)\right]\xrightarrow{T}\mathscr{I}_{P/\Lambda}(1)$$
The matrices $M,N,S$ from the sequence and all the subcomplexes of $\mathscr{I}_{P/\Lambda}(1)$ are given as follows.

\textbf{Matrices}

$
N=\left(
\begin{array}{ccccccccccc}
z&-y&x&0&0&0&0&0&0&0&0\\
0&z&0&-y&x&0&0&0&0&0&0\\
w&0&0&0&0&-y&x&0&0&0&0\\
0&w&0&0&0&-z&0&0&x&0&0\\
0&0&0&0&0&z&0&-y&0&x&0\\
0&0&w&0&0&0&-z&0&y&0&0\\
0&0&0&w&0&0&0&-z&0&0&x\\
0&0&0&0&w&0&0&0&-z&-z&y\\
\end{array}\right)
$
$
S=
\begin{pmatrix}
x&0&0&0&0\\
0&x&0&0&0\\
-z&y&0&0&0\\
0&0&x&0&0\\
0&-z&y&0&0\\
0&0&0&x&0\\
-w&0&0&y&0\\
0&0&0&0&x\\
0&-w&0&z&0\\
0&0&0&-z&y\\
0&0&-w&0&z\\
\end{pmatrix}
$

$
T=
\begin{pmatrix}
y^2 \\ yz \\ z^2 \\ yw \\ zw\\
\end{pmatrix}
$
$
M=
\begin{pmatrix}
-w&0&z&-y&0&x&0&0\\
0&-w&0&z&z&0&-y&x\\
\end{pmatrix}
$

\textbf{Subcomplexes of $\mathscr{I}_{P/\Lambda}(1)$}

The dimension vectors of subcomplexes are given in the following table.

\begin{tabular}{|c|l|c|l|}
\hline
dimension vector & values of m,n & dimension vector& values of m,n\\
\hline
$[0,6,10,5]$& & $[0,7,11,5]$ &\\
\hline
$[0,8,11,5]$& & $[1,5,8,4]$ &\\
\hline
$[1,5,8,5]$&& $[1,6,10,5]$&\\
\hline
$[1,6,11,5]$&& $[1,7,11,5]$&\\
\hline
$[1,8,11,5]$&& $[0,4,8,4]$ &\\
\hline
$[0,4,8,5]$&& $[0,5,8,4]$&\\
\hline
$[0,0,0,n]$ & $n=1,...,5$ & $[0,1,n,5]$ & $n=9,10,11$\\
\hline
$[0,0,2,n]$ & $n=2,3,4,5$ & $[0,2,n,m]$ & $n=6,7,8$ $m=4,5$\\
\hline
$[0,0,3,n]$ & $n=2,3,4,5$ & $[0,2,n,5]$ & $n=9,10,11$\\
\hline
$[0,0,4,n]$ & $n=3,4,5$ & $[0,3,n,m]$ & $n=7,8$, $m=4,5$\\
\hline
$[0,0,5,n]$ & $n=3,4,5$ & $[0,3,n,5]$ & $n=9,10,11$\\
\hline
$[0,0,n,m]$ & $n=6,7,8$, $m=4,5$ & $[0,4,n,5]$ & $n=9,10,11$\\
\hline
$[0,0,n,5]$ & $n=9,10,11$ & $[0,5,n,5]$ & $n=8,9,10,11$\\
\hline
$[0,1,3,n]$ & $n=2,...,5$ & $[1,4,n,m]$ & $n=6,7,8$, $m=4,5$\\
\hline
$[0,1,4,n]$ & $n=3,4,5$ & $[1,5,n,5]$ & $n=9,10,11$\\
\hline
$[0,1,5,n]$ & $n=3,4,5$&$[0,2,5,n]$ & $n=3,4,5$\\
\hline
$[0,1,n,m]$ & $n=6,7,8$, $m=4,5$&&\\
\hline
\end{tabular}

\subsection{Auxiliary diagrams for Lemma \ref{commutative diagrams}}{\label{auxiliary diagrams}}

The following diagrams are commutative.

\begin{enumerate}
\item For $0\to \mathscr{O}_{\mathbb{P}^3} \to E \to \mathscr{F}[1] \to 0$

\begin{footnotesize}
\[
\begin{tikzcd}
0 \arrow{r}\arrow{d}&\begin{matrix}Hom(\mathscr{O}_{\mathbb{P}^3},\mathscr{O}_{\mathbb{P}^3})\\=\mathbb{C}\end{matrix}\arrow{r}\arrow{d}&\begin{matrix}Hom(\mathscr{O}_{\mathbb{P}^3},E)\\=\mathbb{C}\end{matrix}\arrow{r}\arrow{d}&\begin{matrix}Hom(\mathscr{O}_{\mathbb{P}^3},\mathscr{F}[1])\\=0\end{matrix}\arrow{r}\arrow{d}&\begin{matrix}Ext^1(\mathscr{O}_{\mathbb{P}^3},\mathscr{O}_{\mathbb{P}^3})\\=0\end{matrix}\arrow{d}\\
\begin{matrix}Hom(\mathscr{F}[1],\mathscr{F}[1])\\=\mathbb{C}\end{matrix}\arrow{r}{\cong}\arrow{d}&\begin{matrix}Ext^1(\mathscr{F}[1],\mathscr{O}_{\mathbb{P}^3})\\=\mathbb{C}^4\end{matrix}\arrow{r}\arrow{d}&\begin{matrix}Ext^1(\mathscr{F}[1],E)\\{}\end{matrix}\arrow{r}\arrow{d}&\begin{matrix}Ext^1(\mathscr{F}[1],\mathscr{F}[1])\\=\mathbb{C}^{12}\end{matrix}\arrow{r}{=0}\arrow{d}&\begin{matrix}Ext^2(\mathscr{F}[1],\mathscr{O}_{\mathbb{P}^3})\\=\mathbb{C}^{14}\end{matrix}\arrow{d}\\
\begin{matrix}Hom(E,\mathscr{F}[1])\\=\mathbb{C}\end{matrix}\arrow{r}\arrow{d}&\begin{matrix}Ext^1(E,\mathscr{O}_{\mathbb{P}^3})\\\mathbb{C}^3\end{matrix}\arrow{r}\arrow{d}&\begin{matrix}Ext^1(E,E)\arrow{r}\arrow{d}\\{}\end{matrix}&\begin{matrix}Ext^1(E,\mathscr{F}[1])\\=\mathbb{C}^{12}\end{matrix}\arrow{r}{=0}\arrow{d}&\begin{matrix}Ext^2(E,\mathscr{O}_{\mathbb{P}^3})\\=\mathbb{C}^{14}\end{matrix}\arrow{d}\\
\begin{matrix}Hom(\mathscr{O}_{\mathbb{P}^3},\mathscr{F}[1])\\=0\end{matrix}\arrow{r}\arrow{d}&\begin{matrix}Ext^1(\mathscr{O}_{\mathbb{P}^3},\mathscr{O}_{\mathbb{P}^3})\\=0\end{matrix}\arrow{r}\arrow{d}&\begin{matrix}Ext^1(\mathscr{O}_{\mathbb{P}^3},E)\\=0\end{matrix}\arrow{r}\arrow{d}&\begin{matrix}Ext^1(\mathscr{O}_{\mathbb{P}^3},\mathscr{F}[1])\\=0\end{matrix}\arrow{r}\arrow{d}&\begin{matrix}Ext^2(\mathscr{O}_{\mathbb{P}^3},\mathscr{O}_{\mathbb{P}^3})\\=0\end{matrix}\arrow{d}\\
\begin{matrix}Ext^1(\mathscr{F}[1],\mathscr{F}[1])\\=\mathbb{C}^{12}\end{matrix}\arrow{r}&\begin{matrix}Ext^2(\mathscr{F}[1],\mathscr{O}_{\mathbb{P}^3})\\=\mathbb{C}^{14}\end{matrix}\arrow{r}&\begin{matrix}Ext^2(\mathscr{F}[1],E)\\{}\end{matrix}\arrow{r}&\begin{matrix}Ext^2(\mathscr{F}[1],\mathscr{F}[1])\\=0\end{matrix}\arrow{r}&\begin{matrix}Ext^3(\mathscr{F}[1],\mathscr{O}_{\mathbb{P}^3})\\=0\end{matrix}\\
\end{tikzcd}
\]
\end{footnotesize}

\item For $0\to \mathscr{F}_1 \to E \to \mathscr{O}_{\Lambda} \to 0$

\begin{footnotesize}
\[
\begin{tikzcd}
0 \arrow{r}\arrow{d}&\begin{matrix}Hom(\mathscr{F}_1,\mathscr{F}_1)\\=\mathbb{C}\end{matrix}\arrow{r}\arrow{d}&\begin{matrix}Hom(\mathscr{F}_1,E)\\=\mathbb{C}\end{matrix}\arrow{r}\arrow{d}&\begin{matrix}Hom(\mathscr{F}_1,\mathscr{O}_{\Lambda})\\=0\end{matrix}\arrow{r}\arrow{d}&\begin{matrix}Ext^1(\mathscr{F}_1,\mathscr{F}_1)\\=\mathbb{C}^5\end{matrix}\arrow{d}\\
\begin{matrix}Hom(\mathscr{O}_{\Lambda},\mathscr{O}_{\Lambda})\\=\mathbb{C}\end{matrix}\arrow{r}\arrow{d}&\begin{matrix}Ext^1(\mathscr{O}_{\Lambda},\mathscr{F}_1)\\=\mathbb{C}\end{matrix}\arrow{r}\arrow{d}&\begin{matrix}Ext^1(\mathscr{O}_{\Lambda},E)\\=\mathbb{C}^{3}\end{matrix}\arrow{r}\arrow{d}&\begin{matrix}Ext^1(\mathscr{O}_{\Lambda},\mathscr{O}_{\Lambda})\\=\mathbb{C}^{3}\end{matrix}\arrow{r}{=0}\arrow{d}&\begin{matrix}Ext^2(\mathscr{O}_{\Lambda},\mathscr{F}_1)\\=0\end{matrix}\arrow{d}\\
\begin{matrix}Hom(E,\mathscr{O}_{\Lambda})\\=\mathbb{C}\end{matrix}\arrow{r}\arrow{d}&\begin{matrix}Ext^1(E,\mathscr{F}_1)\\=\mathbb{C}^5\end{matrix}\arrow{r}\arrow{d}&\begin{matrix}Ext^1(E,E)\arrow{r}\arrow{d}\\=\mathbb{C}^{15}\end{matrix}&\begin{matrix}Ext^1(E,\mathscr{O}_{\Lambda})\\=\mathbb{C}^{12}\end{matrix}\arrow{r}\arrow{d}&\begin{matrix}Ext^2(E,\mathscr{F}_1)\\=\mathbb{C}^2\end{matrix}\arrow{d}\\
\begin{matrix}Hom(\mathscr{F}_1,\mathscr{O}_{\Lambda})\\=0\end{matrix}\arrow{r}\arrow{d}&\begin{matrix}Ext^1(\mathscr{F}_1,\mathscr{F}_1)\\=\mathbb{C}^5\end{matrix}\arrow{r}\arrow{d}&\begin{matrix}Ext^1(\mathscr{F}_1,E)\\=\mathbb{C}^{12}\end{matrix}\arrow{r}\arrow{d}&\begin{matrix}Ext^1(\mathscr{F}_1,\mathscr{O}_{\Lambda})\\=\mathbb{C}^{9}\end{matrix}\arrow{r}\arrow{d}&\begin{matrix}Ext^2(\mathscr{F}_1,\mathscr{F}_1)\\=\mathbb{C}^2\end{matrix}\arrow{d}\\
\begin{matrix}Ext^1(\mathscr{O}_{\Lambda},\mathscr{O}_{\Lambda})\\=\mathbb{C}^{3}\end{matrix}\arrow{r}&\begin{matrix}Ext^2(\mathscr{O}_{\Lambda},\mathscr{F}_1)\\=0\end{matrix}\arrow{r}&\begin{matrix}Ext^2(\mathscr{O}_{\Lambda},E)\\=0\end{matrix}\arrow{r}&\begin{matrix}Ext^2(\mathscr{O}_{\Lambda},\mathscr{O}_{\Lambda})\\=0\end{matrix}\arrow{r}&\begin{matrix}Ext^3(\mathscr{O}_{\Lambda},\mathscr{F}_1)\\=0\end{matrix}\\
\end{tikzcd}
\]
\end{footnotesize}

\item For $0\to \mathscr{F}_1 \to E \to \mathscr{O}_{\Lambda'} \to 0$

\begin{footnotesize}
\[
\begin{tikzcd}
0 \arrow{r}\arrow{d}&\begin{matrix}Hom(\mathscr{F}_1,\mathscr{F}_1)\\=\mathbb{C}\end{matrix}\arrow{r}\arrow{d}&\begin{matrix}Hom(\mathscr{F}_1,E)\\=\mathbb{C}\end{matrix}\arrow{r}\arrow{d}&\begin{matrix}Hom(\mathscr{F}_1,\mathscr{O}_{\Lambda'})\\=0\end{matrix}\arrow{r}\arrow{d}&\begin{matrix}Ext^1(\mathscr{F}_1,\mathscr{F}_1)\\=\mathbb{C}^5\end{matrix}\arrow{d}\\
\begin{matrix}Hom(\mathscr{O}_{\Lambda'},\mathscr{O}_{\Lambda'})\\=\mathbb{C}\end{matrix}\arrow{r}\arrow{d}&\begin{matrix}Ext^1(\mathscr{O}_{\Lambda'},\mathscr{F}_1)\\=\mathbb{C}\end{matrix}\arrow{r}\arrow{d}&\begin{matrix}Ext^1(\mathscr{O}_{\Lambda'},E)\\=\mathbb{C}^{3}\end{matrix}\arrow{r}\arrow{d}&\begin{matrix}Ext^1(\mathscr{O}_{\Lambda'},\mathscr{O}_{\Lambda'})\\=\mathbb{C}^{3}\end{matrix}\arrow{r}{=0}\arrow{d}&\begin{matrix}Ext^2(\mathscr{O}_{\Lambda'},\mathscr{F}_1)\\=0\end{matrix}\arrow{d}\\
\begin{matrix}Hom(E,\mathscr{O}_{\Lambda'})\\=\mathbb{C}\end{matrix}\arrow{r}\arrow{d}&\begin{matrix}Ext^1(E,\mathscr{F}_1)\\=\mathbb{C}^5\end{matrix}\arrow{r}\arrow{d}&\begin{matrix}Ext^1(E,E)\arrow{r}\arrow{d}\\=\mathbb{C}^{8}\end{matrix}&\begin{matrix}Ext^1(E,\mathscr{O}_{\Lambda'})\\=\mathbb{C}^{3}\end{matrix}\arrow{r}{=0}\arrow{d}&\begin{matrix}Ext^2(E,\mathscr{F}_1)\\=0\end{matrix}\arrow{d}\\
\begin{matrix}Hom(\mathscr{F}_1,\mathscr{O}_{\Lambda'})\\=0\end{matrix}\arrow{r}\arrow{d}&\begin{matrix}Ext^1(\mathscr{F}_1,\mathscr{F}_1)\\=\mathbb{C}^5\end{matrix}\arrow{r}\arrow{d}&\begin{matrix}Ext^1(\mathscr{F}_1,E)\\=\mathbb{C}^{5}\end{matrix}\arrow{r}\arrow{d}&\begin{matrix}Ext^1(\mathscr{F}_1,\mathscr{O}_{\Lambda'})\\=0\end{matrix}\arrow{r}\arrow{d}&\begin{matrix}Ext^2(\mathscr{F}_1,\mathscr{F}_1)\\=0\end{matrix}\arrow{d}\\
\begin{matrix}Ext^1(\mathscr{O}_{\Lambda'},\mathscr{O}_{\Lambda'})\\=\mathbb{C}^{3}\end{matrix}\arrow{r}&\begin{matrix}Ext^2(\mathscr{O}_{\Lambda'},\mathscr{F}_1)\\=0\end{matrix}\arrow{r}&\begin{matrix}Ext^2(\mathscr{O}_{\Lambda'},E)\\=0\end{matrix}\arrow{r}&\begin{matrix}Ext^2(\mathscr{O}_{\Lambda'},\mathscr{O}_{\Lambda'})\\=0\end{matrix}\arrow{r}&\begin{matrix}Ext^3(\mathscr{O}_{\Lambda'},\mathscr{F}_1)\\=0\end{matrix}\\
\end{tikzcd}
\]
\end{footnotesize}

\item For $0\to \mathscr{O}_{\Lambda} \to E \to \mathscr{F}_1\to 0$. In the diagram, "singular" and "smooth" means the support of $E$ is singular or smooth.

\begin{footnotesize}
\[
\begin{tikzcd}
0 \arrow{r}\arrow{d}&\begin{matrix}Hom(\mathscr{O}_{\Lambda},\mathscr{O}_{\Lambda})\\=\mathbb{C}\end{matrix}\arrow{r}\arrow{d}&\begin{matrix}Hom(\mathscr{O}_{\Lambda},E)\\=\mathbb{C}\end{matrix}\arrow{r}\arrow{d}&\begin{matrix}Hom(\mathscr{O}_{\Lambda},\mathscr{F}_1)\\=0\end{matrix}\arrow{r}\arrow{d}&\begin{matrix}Ext^1(\mathscr{O}_{\Lambda},\mathscr{O}_{\Lambda})\\=\mathbb{C}^3\end{matrix}\arrow{d}\\
\begin{matrix}Hom(\mathscr{F}_1,\mathscr{F}_1)\\=\mathbb{C}\end{matrix}\arrow{r}\arrow{d}&\begin{matrix}Ext^1(\mathscr{F}_1,\mathscr{O}_{\Lambda})\\=\mathbb{C}^9\end{matrix}\arrow{r}\arrow{d}&\begin{matrix}Ext^1(\mathscr{F}_1,E)\\=\mathbb{C}^{3}\end{matrix}\arrow{r}\arrow{d}&\begin{matrix}Ext^1(\mathscr{F}_1,\mathscr{F}_1)\\=\mathbb{C}^{5}\end{matrix}\arrow{r}{=0}\arrow{d}&\begin{matrix}Ext^2(\mathscr{F}_1,\mathscr{O}_{\Lambda})\\=\mathbb{C}^{14}\end{matrix}\arrow{d}\\
\begin{matrix}Hom(E,\mathscr{F}_1)\\=\mathbb{C}\end{matrix}\arrow{r}\arrow{d}&\begin{matrix}Ext^1(E,\mathscr{O}_{\Lambda})\\=\mathbb{C}^8\end{matrix}\arrow{r}\arrow{d}&\begin{matrix}Ext^1(E,E)\\=\mathbb{C}^{13}smooth\\=\mathbb{C}^{14}singular\\\end{matrix}\arrow{r}\arrow{d}&\begin{matrix}Ext^1(E,\mathscr{F}_1)\\=\mathbb{C}^{5} smooth\\=\mathbb{C}^6 singular\end{matrix}\arrow{r}{}\arrow{d}&\begin{matrix}Ext^2(\mathscr{F}_1,\mathscr{O}_{\Lambda})\\=\mathbb{C}^{11}\end{matrix}\arrow{d}\\
\begin{matrix}Hom(\mathscr{O}_{\Lambda},\mathscr{F}_1)\\=0\end{matrix}\arrow{r}\arrow{d}&\begin{matrix}Ext^1(\mathscr{O}_{\Lambda},\mathscr{O}_{\Lambda})\\=\mathbb{C}^3\end{matrix}\arrow{r}\arrow{d}&\begin{matrix}Ext^1(\mathscr{O}_{\Lambda},E)\\=\mathbb{C}^{4}\end{matrix}\arrow{r}\arrow{d}&\begin{matrix}Ext^1(\mathscr{O}_{\Lambda},\mathscr{F}_1)\\=\mathbb{C}\end{matrix}\arrow{r}\arrow{d}&\begin{matrix}Ext^2(\mathscr{O}_{\Lambda},\mathscr{O}_{\Lambda})\\=0\end{matrix}\arrow{d}\\
\begin{matrix}Ext^1(\mathscr{F}_1,\mathscr{F}_1)\\=\mathbb{C}^{5}\end{matrix}\arrow{r}&\begin{matrix}Ext^2(\mathscr{F}_1,\mathscr{O}_{\Lambda})\\=\mathbb{C}^{14}\end{matrix}\arrow{r}&\begin{matrix}Ext^2(\mathscr{F}_1,E)\\=0\end{matrix}\arrow{r}&\begin{matrix}Ext^2(\mathscr{F}_1,\mathscr{F}_1)\\=\mathbb{C}^2\end{matrix}\arrow{r}&\begin{matrix}Ext^3(\mathscr{F}_1,\mathscr{O}_{\Lambda})\\=0\end{matrix}\\
\end{tikzcd}
\]
\end{footnotesize}

\end{enumerate}

\bibliographystyle{alpha}
\bibliography{ref.bib}

\end{document}